\documentclass[11pt,a4paper]{article}
\usepackage{amsthm}	
\usepackage{amsfonts, amssymb, amsmath, amscd, latexsym}
\usepackage{eucal, enumitem, latexsym}
\usepackage{graphics}
\usepackage{graphicx}
\usepackage{mathscinet}
\usepackage{psfrag}
\usepackage{tikz}
\usetikzlibrary{decorations.markings}
\usepackage{footmisc}
\usepackage{tikz}
\usepackage{pgfplots}
\pgfplotsset{compat=1.17}
\usepackage{enumitem}
\usepackage{subcaption}
\usetikzlibrary{calc} 
\usepackage{xcolor}

\newcommand{\R}{\mathbb R} 

\newtheorem{theorem}{Theorem}[section]

\newtheorem{proposition}{Proposition}[section]
\newtheorem{definition}{Definition}[section]
\newtheorem{remark}{Remark}[section]
\newtheorem{lemma}{Lemma}[section]

\newcommand{\sgn}{\mathop{\rm sgn}}

\renewcommand{\epsilon}{\varepsilon}
\newcommand{\eps}{\epsilon}
\renewcommand{\phi}{\varphi}
\newenvironment{proofof}[1]{\smallskip\noindent\emph{Proof of #1.}%
\hspace{1pt}}{\hspace{-5pt}{\nobreak\quad\nobreak\hfill\nobreak%
$\square$\vspace{8pt}\par}\smallskip\goodbreak}

\usetikzlibrary{decorations.pathreplacing,decorations.markings}
\tikzset{
  on each segment/.style={
    decorate,
    decoration={
      show path construction,
      moveto code={},
      lineto code={
        \path [#1]
        (\tikzinputsegmentfirst) -- (\tikzinputsegmentlast);
      },
      curveto code={
        \path [#1] (\tikzinputsegmentfirst)
        .. controls
        (\tikzinputsegmentsupporta) and (\tikzinputsegmentsupportb)
        ..
        (\tikzinputsegmentlast);
      },
      closepath code={
        \path [#1]
        (\tikzinputsegmentfirst) -- (\tikzinputsegmentlast);
      },
    },
  },
  mid arrow/.style={postaction={decorate,decoration={
        markings,
        mark=at position .6 with {\arrow[#1]{stealth}}
      }}},
}

\begin{document}
\date{}
\title{
Shock wavefronts for parabolic equations with sign-changing diffusivity
\\
}
\author{
Diego Berti\footnote{Department of Mathematics, University of Turin, Italy}
\and
Andrea Corli\footnote{Department of Mathematics and Computer Science, University of Ferrara, Italy}
\and
Luisa Malaguti\footnote{Department of Sciences and Methods for Engineering, University of Modena and Reggio Emilia, Italy}
}

\maketitle
\begin{abstract}
We consider a reaction–diffusion equation in a one-dimensional space, where the diffusion coefficient changes sign from positive to negative and back to positive. The reaction term is bistable, with its interior zero located in the region where the diffusivity is negative. The model does not admit continuous wavefronts, i.e., continuous traveling-waves that connect the steady states $0$ and $1$. We prove the existence of a family of shock wavefronts, that is, wavefronts with profiles exhibiting a jump discontinuity. We investigate the properties of these profiles and their propagation speeds. Finally, we apply the results to a recently proposed model describing the movement of a population composed of both isolated and grouped individuals.
\end{abstract}

\vspace{1cm}
\noindent \textbf{AMS Subject Classification:} 35K65; 35C07, 35K57, 92D25

\smallskip
\noindent
\textbf{Keywords:} Shock wavefronts, sign-changing diffusivity, traveling-wave solutions, diffusion-convection reaction equations.

\section{Introduction}
\label{sec:intro}
In this paper we consider the equation
\begin{equation}\label{e:E}
u_t =P(u)_{xx} + g(u), \qquad t\ge 0, \, x\in \R.
\end{equation}
The unknown function $u$ denotes the density (or concentration) of some quantity and has bounded range; for simplicity we assume $u\in[0,1]$. The diffusion potential $P$ is assigned up to an additive constant; we could take $P(0)=0$, but since the results involve differences of $P$, they are clearer without fixing this choice. About the regularity and structure of the functions $P$ and $g$ we always assume in this paper
\begin{itemize}
\item[(R)] $P\in C^2[0,1]$, $g\in C^0[0,1]$; both $P'$ and $g$ have a finite number of zeros.
\end{itemize}
However, even though the techniques introduced in this paper also apply to the previous general framework, for most of the paper we focus on the case where there exist three real numbers $\alpha,\beta,\gamma$ satisfying
\begin{equation}\label{eq:condition}
0< \alpha < \gamma < \beta <1
\end{equation}
and we assume, in addition to (R), the following conditions, see Figure \ref{f:Dg}:
\begin{itemize}
\item[(P)] $P'>0$ in $(0,\alpha) \cup (\beta,1)$ and $P'<0$ in $(\alpha, \beta)$;

\item[{(g)}] $g(0)=g(1)=0$ and $g<0$ in $(0,\gamma)$, $g>0$ in $(\gamma,1)$.
\end{itemize}

\noindent Condition (P) is equivalent to the following one, expressed through the diffusivity $D:=P'$
\begin{itemize}
\item[]  $D>0$ in $(0,\alpha) \cup (\beta,1)$ and $D<0$ in $(\alpha, \beta)$.
\end{itemize}

\begin{figure}[htb]
\begin{center}
\begin{tikzpicture}[>=stealth, scale=0.6]

\draw[->] (0,0) --  (10,0) node[below]{$u$} coordinate (x axis);
\draw[->] (0,0) -- (0,4) node[right]{$P$} coordinate (y axis);
\draw (0,0) -- (0,-1.5);
\begin{scope}[yshift=20]
\draw[thick] (0,0) .. controls (1,2) and (2,2.5) .. (3,2.5);
\draw[thick] (3,2.5)  .. controls (4,2.5) and (5,1) .. (6,1);
\draw[thick] (6,1)  .. controls (7,1) and (8,2) .. (9,3.5);
\end{scope}

\draw[dotted] (3,0) -- (3,3.2);
\draw[dotted] (6,0) -- (6,1.8);
\draw[dotted] (9,0)  node[below]{{\footnotesize{$1$}}}-- (9,4.2);
\filldraw[black] (3,0) circle (2pt) node[below]{\footnotesize$\alpha$};
\filldraw[black] (6,0) circle (2pt)  node[below]{\footnotesize$\beta$};


\begin{scope}[xshift=350]
\draw[->] (0,0) --  (10,0) node[below]{$u$} coordinate (x axis);
\draw[->] (0,0) -- (0,4) node[right]{$D,g$} coordinate (y axis);
\draw (0,0) -- (0,-1.5);
\draw[thick] (0,1) .. controls (1,2) and (2,2) .. (3,0) node[left=4, below=0]{{\footnotesize{$\alpha$}}} node[midway,above]{$D$};

\draw[thick] (3,0) .. controls (4,-2) and (5,-2) .. (6,0);
\draw[thick] (6,0) node[right=3, below=0]{{\footnotesize{$\beta$}}} .. controls (7,2) and (8,2) .. (9,1);
\draw[dotted] (9,1) -- (9,0) node[below]{{\footnotesize{$1$}}};
\draw[thick,dashed] (0,0) .. controls (1.5,-2) and (3,-2) .. (4.5,0) node[left=4, above=0]{{\footnotesize{$\gamma$}}} node[midway,below]{$g$};
\draw[thick,dashed] (4.5,0) .. controls (6,2) and (7.5,2) .. (9,0);
\filldraw[black] (3,0) circle (2pt);
\filldraw[black] (4.5,0) circle (2pt);
\filldraw[black] (6,0) circle (2pt);
\end{scope}
\end{tikzpicture}

\end{center}
\caption{\label{f:Dg}{Left: plot of the potential $P$. Right: plots of the diffusivity $D$ (solid line) and the source-sink term $g$ (dashed line).}}
\end{figure}
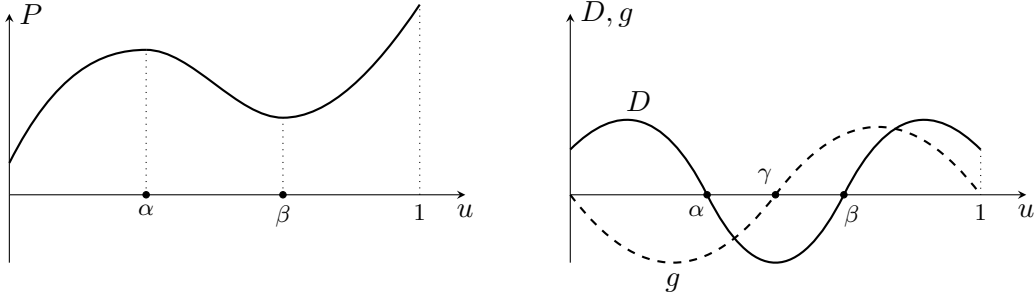
Assumption (P) makes \eqref{e:E} a forward-backward-forward equation; we allow $P'=D$ to possibly vanish at either $0$ or $1$. We recall that a source term $g$ satisfying (g) is called {\em bistable}, and $g$ is called \emph{monostable} if it  is positive in $(0,1)$ and vanishes at $0$ and $1$. Equation \eqref{e:E} is known to govern several biological and physical processes \cite{GK} and it has also been recently proposed in \cite{JBMS} as a model for the movement of a population constituted by isolated and grouped organisms. This model depends on several parameters, and the choice of conditions  (P) and (g) allows  to investigate it under appropriate parameter values (see Section~\ref{s:modello}).

This paper concerns the existence of {\em shock wavefronts} to \eqref{e:E}; precise definitions are provided in Section \ref{e:MR}. 
We recall that a shock wavefront is a solution $u(x,t)=\phi(x-ct)$ to \eqref{e:E}, with $c\in\R$, where the profile $\phi=\phi(\xi)$ is defined on $\R$, monotone, and connects two zeros of $g$. It may have at most a finite number of jump discontinuities and is of class $C^2$ elsewhere except, possibly, at points  where $D$ vanishes. In particular, we focus on the case
\begin{equation} \label{e:phi-limits}
\phi(-\infty)=1\quad \hbox{and}\quad \phi(\infty)=0.
\end{equation}
The case $\phi(-\infty)=0$, $\phi(\infty)=1$ is dealt analogously.
When $\phi$ is continuous, it is called a \emph{regular} wavefront.
The profile must satisfy the ordinary differential equation
\begin{equation}\label{e:EP}
P(\phi)''+c\phi'+g(\phi)=0,
\end{equation}
and solutions are meant in the distributional sense. For regular profiles, \eqref{e:EP} is equivalent to
\begin{equation}\label{e:ED}
\left(D(\phi)\phi'\right)'+c\phi'+g(\phi)=0.
\end{equation}

\paragraph{Regular wavefronts} If $D$ is strictly positive in $(0,1)$, then the existence of regular wavefronts is classical \cite{Fife, GK}. If $D$ changes sign and $g$ is \emph{monostable}, we refer to \cite{BCM2} and \cite{Ferracuti-Marcelli-Papalini}. The case when $D$ changes sign and $g$ is  {\em bistable} falls within  the framework of \cite{JBMS} (see Case 6, sub-case 3, there) and it is treated in  \cite{BCM3} and \cite{Kuzmin-Ruggerini}. 
A key result about such a case is provided in \cite{Kuzmin-Ruggerini}: a wavefront may exist only for a {\em single} value of the speed $c$. If it exists, then
\begin{align}\label{e:KR-cn1}
\text{either}\  \gamma<\alpha, \ c>0,\ \int_0^\alpha D(s)g(s)\, ds>0 \quad \text{ or }\quad \gamma>\beta, \ c<0,\ \int_\beta^1 D(s)g(s)\, ds<0.
\end{align}
As a consequence, if $\gamma\in[\alpha,\beta]$, as it is the case under (P) and (g), then {\em regular wavefronts do not exist}. This motivates our choice of these assumptions. Sufficient conditions for the existence of profiles are also provided in \cite{Kuzmin-Ruggerini}.

\paragraph{Discontinuous wavefronts} Numerical simulations in \cite[p. 13]{JBMS} led the authors to claim that for some $c$, for which regular wavefronts do not exist,
there are shock wavefronts skipping the whole interval $(\alpha,\beta)$, see Figure \ref{f:=}. In this case, equation \eqref{e:EP} has a meaning in the weak sense, that is in the space of distributions while, without imposing further conditions, equation \eqref{e:ED} has not. In fact, if $\phi$ is discontinuous then the same holds for $D(\phi)$, in general; therefore, the term $P(\phi)'=D(\phi)\phi'$ in \eqref{e:ED} would be the product of a discontinuous function with a Dirac mass, which has no distributional meaning.

The existence of discontinuous wavefronts was confirmed numerically in \cite[\S 4.3]{LHMS} and then studied analytically in \cite{Li2021}, for simplified forms of the explicit diffusion and source terms deduced in \cite{JBMS}. The approach in \cite{Li2021}, inspired by \cite{Witelski}, focuses on the analysis of {\em augmented equations}, which are obtained by adding either $-\epsilon^2u_{xxxx}$ or $\epsilon^2u_{xxt}$ to the right-hand side of \eqref{e:E}. The authors in \cite{Li2021} construct regular wavefronts for the augmented equations and then rely on the Geometric Singular Perturbation Theory to pass to the limit $\eps\to0$. The analysis of \cite{Li2021} is continued in \cite{MTMWBH2} where some conditions are introduced to single out the jump; they are, in addition to the continuity of the potential $P$, either the continuity of $D$ or  the equal-area rule for $P$. The latter two conditions coincide in special cases \cite{MTMWBH}; in general, however, they lead to different jumps.

The problem of defining discontinuous solutions to parabolic degenerate equations has a long history. For saturated-diffusion equations we refer to \cite{Bertsch-DalPasso}, where a notion of entropy was introduced to single out solutions. For a slightly different approach 
we refer to \cite{CCCSS_2015, Caselles_entropy, Caselles_flux-limited}. As far as wavefronts are concerned, they can be discontinuous, too, see for instance \cite{CCCSS_qualitative, Chertock-Kurganov-Rosenau_2005,Goodman-Kurganov-Rosenau, Kurganov-Rosenau}. The paper \cite{Campos-Corli-Malaguti} deals with an equation with a convective term and, again, a saturated diffusion. For the case of discontinuous diffusivities we refer to \cite{Drabek-SKZ, Drabek-Zahradinkova21, Drabek-Zahradinkova23}; in \cite{Guarnotta-Marcelli} the reaction term is also discontinuous.

\paragraph{Main results} First, we provide a general framework for dealing with shock wavefronts of equations such as \eqref{e:E}. The result is stated in Proposition~\ref{p:Pconditions}, where only condition (R) is assumed, and it is based on the classical theory of distributions. The main contribution of this paper is Theorem~\ref{t:main}; there, we assume both conditions (P) and (g) and provide a rather complete discussion of the existence of shock wavefronts for equation \eqref{e:E} and their qualitative properties. Our approach does not require the introduction of an augmented equation, but instead relies only on \eqref{e:E}. We build on the analysis in Proposition~\ref{p:Pconditions} and make use of a singular order reduction and the pasting of different stretches of regular profiles, as in \cite{BCM1, BCM2, BCM3, BCM5}. We also exploit results from those papers obtained via comparison methods for lower and upper solutions. These techniques allow us to handle an arbitrary reaction term satisfying (R) and (g), without requiring any specific polynomial structure, as, for instance, in \cite{MTMWBH2}. We can further determine the admissible speeds (see \eqref{e:speed}) and investigate their properties as a function of the jump.


Here follows our main result; throughout, {\em uniqueness} of profiles is always meant up to horizontal shifts. We refer to Figure \ref{f:=}.

\begin{theorem}
\label{t:main}
Assume conditions {\rm (R)}, {\rm (P)} and {\rm (g)}; then we have the following results.

\begin{enumerate}
\item[(1)]
Assume $P(1)>P(0)$.  There exists a closed non-empty interval $\mathcal I\subset [0,\alpha]$ such that, for every $\varphi_\ell \in \mathcal I$, there are unique $\varphi_r\in[\beta,1]$ and $c^*\in \mathbb R$ for which \eqref{e:E} has a unique shock wavefront satisfying \eqref{e:phi-limits}, with speed $c^*$, whose profile $\varphi$ has a single jump from $\varphi_r$ to $\varphi_\ell$. Moreover, $\varphi_r$ is determined by
\begin{equation}
\label{e:condition_P}
P(\varphi_r)=P(\varphi_\ell),
\end{equation}  
and $c^*(\varphi_\ell)$ is continuous and strictly decreasing. Equation \eqref{e:E} admits no other shock wavefronts satisfying \eqref{e:phi-limits}.

\item[(2)] Assume $P(1)=P(0)$. Then Equation \eqref{e:E} admits the piecewise constant shock wavefront satisfying \eqref{e:phi-limits} whose profile jumps from $1$ to $0$ and has speed $c=0$. If, in addition, $P(\gamma)=P(0)$, then \eqref{e:E} also admits the piecewise constant shock wavefront whose profile has two jumps, one from $1$ to $\gamma$ and the other from $\gamma$ to $0$; the speed is again $0$. 
Equation \eqref{e:E} has no other shock wavefronts satisfying \eqref{e:phi-limits}.

\item[(3)] Assume $P(1)<P(0)$. Then Equation \eqref{e:E} has no shock wavefronts satisfying \eqref{e:phi-limits}.
\end{enumerate}
\end{theorem}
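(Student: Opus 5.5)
We will rely on Proposition~\ref{p:Pconditions}, which characterizes shock wavefronts under (R): across every jump $\varphi(\xi_0^-)\to\varphi(\xi_0^+)$ the potential is continuous, $P(\varphi(\xi_0^-))=P(\varphi(\xi_0^+))$, and the flux $P(\varphi)'$ is continuous. Away from the jumps the profile is a regular solution of \eqref{e:ED}. The first step is to classify the admissible jumps of a decreasing profile. Since $\varphi$ is monotone and regular stretches cannot cross the interval $(\alpha,\beta)$ in a way compatible with \eqref{e:KR-cn1}, the profile must skip $(\alpha,\beta)$, at least in the region where $D<0$. By (P), $P$ increases on $[0,\alpha]$ and on $[\beta,1]$ and decreases on $[\alpha,\beta]$. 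A jump from $\varphi_r\ge\beta$ down to $\varphi_\ell\le\alpha$ with $P(\varphi_r)=P(\varphi_\ell)$ therefore forces $P(\varphi_\ell)\in[P(\beta),P(\alpha)]\cap[P(0),P(\alpha)]$ and $P(\varphi_r)\in[P(\beta),P(1)]$. Hence such a jump exists iff $P(1)\ge \max\{P(0),P(\beta)\}$ on the relevant range, and $\varphi_r$ is uniquely determined by $\varphi_\ell$ because $P$ is injective on $[\beta,1]$.

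Next we show that intermediate values in $(\alpha,\beta)$ are impossible except as constant states at zeros of $g$. A regular stretch lying in $(\alpha,\beta)$ satisfies a backward equation. Between two jumps, continuity of the flux $P(\varphi)'$ together with the monotonicity of $\varphi$ gives a sign contradiction, unless the stretch is constant, which requires $g=0$, i.e.\ $\varphi\equiv\gamma$. This is how the two-jump solution in (2) appears, and only under $P(\gamma)=P(0)=P(1)$.

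For (1) we perform the singular order reduction $z(\varphi)=D(\varphi)\varphi'$ on $[0,\varphi_\ell]$ and on $[\varphi_r,1]$, as in \cite{BCM1,BCM3}. This gives first-order problems $\dot z=-c-\frac{D(\varphi)g(\varphi)}{z}$ with $z<0$, where $z(0)=0$ and $z(1)=0$ respectively. On $[0,\varphi_\ell]\subset[0,\alpha]$ the product $Dg<0$, so this is a monostable-type problem: for every $c$ above a threshold there is a solution $z_\ell(\cdot;c)$, and its value at $\varphi_\ell$ is monotone in $c$ by the comparison results for lower and upper solutions. The same holds on $[\varphi_r,1]$, where $Dg>0$ for $\varphi>\beta>\gamma$. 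The flux condition $z_\ell(\varphi_\ell;c)=z_r(\varphi_r;c)$ is a scalar equation in $c$. The two sides are strictly monotone in $c$ in opposite directions, which gives existence and uniqueness of $c^*(\varphi_\ell)$. Integrating \eqref{e:EP} over $\R$ and using the limits should yield an expression like \eqref{e:speed}, which ties the sign of $c^*$ to $P(1)-P(0)$ and to integrals of $g$. The set $\mathcal I$ consists of those $\varphi_\ell$ for which the jump exists and the matching $c$ lies in the admissible range of both half-problems. Closedness follows from continuous dependence, and nonemptiness follows from a limiting argument at the endpoints, e.g.\ $\varphi_\ell$ with $P(\varphi_\ell)=P(\beta)$, or $\varphi_\ell=0$ when $P(0)\ge P(\beta)$. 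Strict decrease of $c^*$ follows from the comparison: increasing $\varphi_\ell$ increases $\varphi_r$ and shifts the two flux curves so that the crossing moves downward in $c$.

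For (2) and (3): if $P(1)\le P(0)$, any jump from $[\beta,1]$ to $[0,\alpha]$ needs $P(\varphi_r)=P(\varphi_\ell)$ with $P(\varphi_r)\le P(1)\le P(0)\le P(\varphi_\ell)$. This forces $\varphi_r=1$ and $\varphi_\ell=0$ with equality, which is impossible in case (3) and gives the constant pieces in case (2). Here $c=0$ by the integrated relation, since the flux is zero. The main obstacle is the monotonicity and strictness in $c$ of the endpoint values $z_\ell(\varphi_\ell;c)$ and $z_r(\varphi_r;c)$, which may degenerate when $D$ vanishes at $0$ or $1$. We would handle this first via the strict comparison lemmas from \cite{BCM2}, and then verify that the crossing occurs with $c$ in the admissible ranges, which determines the precise shape of $\mathcal I$.
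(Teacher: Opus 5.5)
Your matching condition at the shock is wrong, and this breaks the core of part (1). Proposition~\ref{p:Pconditions} does not give continuity of $P(\varphi)'$ across a jump. Testing the weak formulation near $\xi_s$ gives $[P(\varphi)]\psi'(\xi_s)-[P(\varphi)'+c\varphi]\psi(\xi_s)=0$, so the continuous quantity is $P(\varphi)'+c\varphi$; $P(\varphi)'$ itself is continuous only when $c=0$. The speed equation is therefore not $z_\ell(\varphi_\ell;c)=z_r(\varphi_r;c)$ but
\[
\mathcal F(c,\varphi_\ell):=z_c(\eta(\varphi_\ell))-z_c(\varphi_\ell)+c\,\bigl(\eta(\varphi_\ell)-\varphi_\ell\bigr)=0 .
\]
With your equation the scheme fails in cases the theorem covers. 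If $P(\beta)<P(0)<P(1)$, then $0\in\mathcal I$, $z_c(0)=0$ and $\eta(0)\in(\beta,1)$, so you would need $z_c(\eta(0))=0$. This is impossible, since $z_c<0$ on $(\beta,1)$ for every $c$. Moreover, any $c\neq0$ your equation produced would give a glued profile that is not a weak solution.

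The linear term $c(\eta(\varphi_\ell)-\varphi_\ell)$ is exactly what makes the proof work. Proposition~\ref{p:twins} gives only weak monotonicity: $z_c(\eta(\varphi_\ell))$ is nondecreasing and $z_c(\varphi_\ell)$ nonincreasing in $c$. Combined with these, the linear term makes $\mathcal F$ strictly increasing in $c$; combined with $z_c\le0$, it gives $\mathcal F\to\pm\infty$ as $c\to\pm\infty$. So the obstacle you single out never has to be overcome: strictness of the endpoint values of $z$ genuinely fails at $\alpha,\beta$ and trivially at $0,1$. You also need continuity of $\mathcal F$ in $c$, which the paper gets from uniform convergence $z_{c_n}\to z_{c_0}$ via \cite[Lemma 3.3]{BCM1}. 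Strict decrease of $c^*$ comes from $\partial_{\varphi_\ell}\mathcal F=-\frac{(Dg)(\eta)}{z_c(\eta)}\dot\eta+\frac{(Dg)(\varphi_\ell)}{z_c(\varphi_\ell)}>0$. Here the $c$-terms coming from $\dot z=-c-Dg/z$ cancel against the derivative of the linear term. With your equation a term $c(1-\dot\eta)$ of indefinite sign survives, and the comparison argument does not close.

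The necessity part also has a hole. \eqref{e:KR-cn1} concerns regular wavefronts from $1$ to $0$ and says nothing about a regular stretch of a shock profile. It cannot exclude a jump landing in $(\alpha,\beta)$ followed by a continuous descent through $\alpha$ to $0$, or the symmetric situation. Your flux-sign argument only treats stretches between two jumps, and uses the wrong flux condition. The missing ingredients are as follows.
\begin{enumerate}
\item For a decreasing profile, $z=D(\varphi)\varphi'$ is $\ge0$ at values in $(\alpha,\beta)$ and $\le0$ at values in $[0,\alpha]\cup[\beta,1]$. The correct relation $c=-\frac{z(\varphi_r)-z(\varphi_\ell)}{\varphi_r-\varphi_\ell}$ then forces $c\ge0$ if $\varphi_\ell\in(\alpha,\beta)$ and $c\le0$ if $\varphi_r\in(\alpha,\beta)$ (Lemma~\ref{l:signC}).
\item Proposition~\ref{p:2eta} shows that a regular semi-wavefront from $1$ to some $\sigma\in(\alpha,\beta)$ needs $c>0$, and one from $\sigma$ to $0$ needs $c<0$. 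Together with the previous item this excludes jump states in $(\alpha,\beta)$.
\item With two jumps, the same lemma first yields $c=0$. Only then is $P(\varphi)'$ continuous, so all $z$-values at the jump states vanish. The equation $\dot z=-Dg/z$ then gives a contradiction unless $\varphi$ is the step through $\gamma$ (Proposition~\ref{p:tratti}); for example, $\dot z>0$ on $(\varphi_r^2,1)$ contradicts $z(\varphi_r^2)=z(1)=0$.
\end{enumerate}
Finally, the regular semi-wavefronts from $1$ to $\beta$ and from $\alpha$ to $0$ exist for \emph{every} $c\in\R$; a threshold in $c$ only decides whether $z_c(\beta)=0$ (resp.\ $z_c(\alpha)=0$). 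Hence $\mathcal I$ is determined by $P$ alone: it is the set of $\varphi_\ell\in[0,\alpha]$ with $P(\varphi_\ell)=P(\varphi_r)$ for some $\varphi_r\in[\beta,1]$. It is nonempty by the intermediate value theorem for $P(\psi)-P(\varphi)$ between $(0,1)$ and $(\alpha,\beta)$, and no speed-admissibility restriction enters its definition.
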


\begin{figure}[htb]
\begin{center}
\begin{tikzpicture}[>=stealth, scale=0.65]

\draw[->] (0,0) --  (10,0) node[below]{$u$} coordinate (x axis);
\draw[->] (0,0) -- (0,4.5) node[right]{$P$} coordinate (y axis);
\draw (0,0) -- (0,-1);

\begin{scope}[yshift=20]
\draw[very thick] (0,0) .. controls (1,2) and (2,2.5) .. (3,2.5);
\draw[very thick] (3,2.5)  .. controls (4,2.5) and (5,1) .. (6,1);
\draw[very thick] (6,1)  .. controls (7,1) and (8,2) .. (9,3.5);
\draw[dotted](0,3.5) node[left]{\footnotesize$P(1)$}-- (9,3.5);
\draw[dashed, thick] (1,1.5)--(7.25,1.5);
\end{scope}
\draw[dotted] (3,-1) -- (3,3.2);
\draw[dotted] (6,0) -- (6,1.8);
\draw[dotted] (9,0)  node[below]{{\footnotesize{$1$}}}-- (9,4.2);
\filldraw[black] (3,0) circle (2pt) node[below]{\footnotesize$\alpha$};
\filldraw[black] (6,0) circle (2pt)  node[below=7, left=-1]{\footnotesize$\beta$};

\draw[dotted] (0.55,1.68)--(6,1.68);
\draw[dotted] (0.58,-1)--(0.58,1.68);
\draw[<->] (0.55,-1)--(3,-1) node[midway,below]{$\mathcal{I}$};

\draw[dotted] (1,0) node[below]{\footnotesize$\phi_\ell$}--(1,2.2);
\draw[dotted] (7.25,0) node[below]{\footnotesize$\phi_r$}--(7.25,2.2);

\begin{scope}[xshift=420]
\draw[->] (-4,0) --  (6,0) node[below]{$\xi$} coordinate (x axis);
\draw[->] (0,0) -- (0,4.5) node[right]{$\phi$} coordinate (y axis);

\draw (-4,3.5) -- (6,3.5);
\draw[very thick] (-4,3.3) .. controls (-2,3.3) and (0,3.2).. (2,2.7) node[near start,below]{\footnotesize$\phi$};
\draw[very thick] (2,1) .. controls (3,0.5) and (5,0.2) ..  (6,0.2) node[near end,above]{\footnotesize$\phi$};

\draw(-0.1,2.3) node[left]{\footnotesize$\beta$}--(0.1,2.3);
\draw(-0.1,1.4) node[left]{\footnotesize$\alpha$}--(0.1,1.4);

\draw[dotted] (2,0) node[below]{$\xi_s$} --(2,2.7);
\draw[dotted] (0,2.7) node[left]{\footnotesize$\phi_r$}--(2,2.7);
\draw[dotted] (0,1) node[left]{\footnotesize$\phi_\ell$}--(2,1);

\draw (0,4) node[left]{$1$};

\end{scope}

\begin{scope}[yshift=-200]
\begin{scope}[yshift=50]
\draw[->] (0,0) --  (9.5,0) node[below]{$u$} coordinate (x axis);
\draw[->] (0,0) -- (0,2.5) node[right]{$P$} coordinate (y axis);
\draw (0,0) -- (0,-0.5);

\begin{scope}[yshift=20]
\draw[very thick] (0,0) .. controls (1,1) and (2,1.5) .. (3,1.5);
\draw[very thick] (3,1.5)  .. controls (4.7,1.5) and (4.7,-1.5) .. (6,-1.5);
\draw[very thick] (6,-1.5)  .. controls (7,-1.5) and (8,-1) .. (9,0);
\draw[thick, dashed](0,0) node[left]{\footnotesize$P(1)=P(0)$}-- (9,0);
\end{scope}
\draw[dotted] (3,0) -- (3,2.2);
\draw[dotted] (4,0) -- (4,1.8);
\draw[dotted] (6,0) -- (6,-0.8);
\draw[dotted] (9,0)  node[below]{{\footnotesize{$1$}}}-- (9,0.7);
\filldraw[black] (3,0) circle (2pt) node[below]{\footnotesize$\alpha$};
\filldraw[black] (4,0) circle (2pt) node[below]{\footnotesize$\gamma$};
\filldraw[black] (6,0) circle (2pt)  node[above]{\footnotesize$\beta$};
\end{scope}

\begin{scope}[yshift=-50]
\draw[->] (0,0) --  (9.5,0) node[below]{$u$} coordinate (x axis);
\draw[->] (0,0) -- (0,2.5) node[right]{$P$} coordinate (y axis);
\draw (0,0) -- (0,-0.5);
\begin{scope}[yshift=20]
\draw[very thick] (0,0) .. controls (1,1) and (2,1.5) .. (3,1.5);
\draw[very thick] (3,1.5)  .. controls (4.7,1.5) and (4.7,-1.5) .. (6,-1.5);
\draw[very thick] (6,-1.5)  .. controls (7,-1.5) and (8,-1) .. (9,0);
\draw[thick, dashed](0,0) node[left]{\footnotesize$P(1)=P(0)$}-- (9,0);
\end{scope}
\draw[dotted] (3,0) -- (3,2.2);
\draw[dotted] (4.62,0) -- (4.62,0.8);
\draw[dotted] (6,0) -- (6,-0.8);
\draw[dotted] (9,0)  node[below]{{\footnotesize{$1$}}}-- (9,0.7);
\filldraw[black] (3,0) circle (2pt) node[below]{\footnotesize$\alpha$};
\filldraw[black] (4.62,0) circle (2pt) node[below]{\footnotesize$\gamma$};
\filldraw[black] (6,0) circle (2pt)  node[above]{\footnotesize$\beta$};
\end{scope}

\begin{scope}[xshift=420,yshift=50]
\draw[->] (-4,0) --  (6,0) node[above]{$\xi$} coordinate (x axis);
\draw[->] (0,0) -- (0,3) node[right]{$\phi$} coordinate (y axis);
\draw (-4,2.5) -- (6,2.5);
\draw[very thick] (-4,2.5) -- (2,2.5) node[near start,below]{\footnotesize$\phi$};
\draw[very thick] (2,0) -- (6,0) node[near end,above]{\footnotesize$\phi$};
\draw[dotted] (2,0) node[above=8, right=0]{$\xi_s$} --(2,2.7);
\draw (0,3) node[left]{$1$};
\end{scope}

\begin{scope}[xshift=420,yshift=-50]
\draw[->] (-4,0) --  (6,0) node[below]{$\xi$} coordinate (x axis);
\draw[->] (0,0) -- (0,3) node[right]{$\phi$} coordinate (y axis);
\draw (-4,2.5) -- (6,2.5);
\draw[very thick] (-4,2.5) -- (-2,2.5) node[near start,below]{\footnotesize$\phi$};
\draw[very thick] (-2,1.4) -- (2,1.4) node[near start,below]{\footnotesize$\phi$};
\draw[very thick] (2,0) -- (6,0) node[near end,below]{\footnotesize$\phi$};
\draw[dotted] (-2,0) node[below]{$\xi_{*,1}$} --(-2,2.5);
\draw[dotted] (2,0) node[below]{$\xi_{*,2}$} --(2,1.4);
\draw (0,3) node[left]{$1$};
\draw (0,1.4) node[right=6,above=0]{\footnotesize$\gamma$};
\end{scope}
\end{scope}

\end{tikzpicture}

\end{center}
\caption{\label{f:=}{Above: the case $P(1)>P(0)$ and the interval $\mathcal{I}$. Middle: the case $P(1)=P(0)$. Below: the case $P(1)=P(\gamma)=P(0)$.}}
\end{figure}
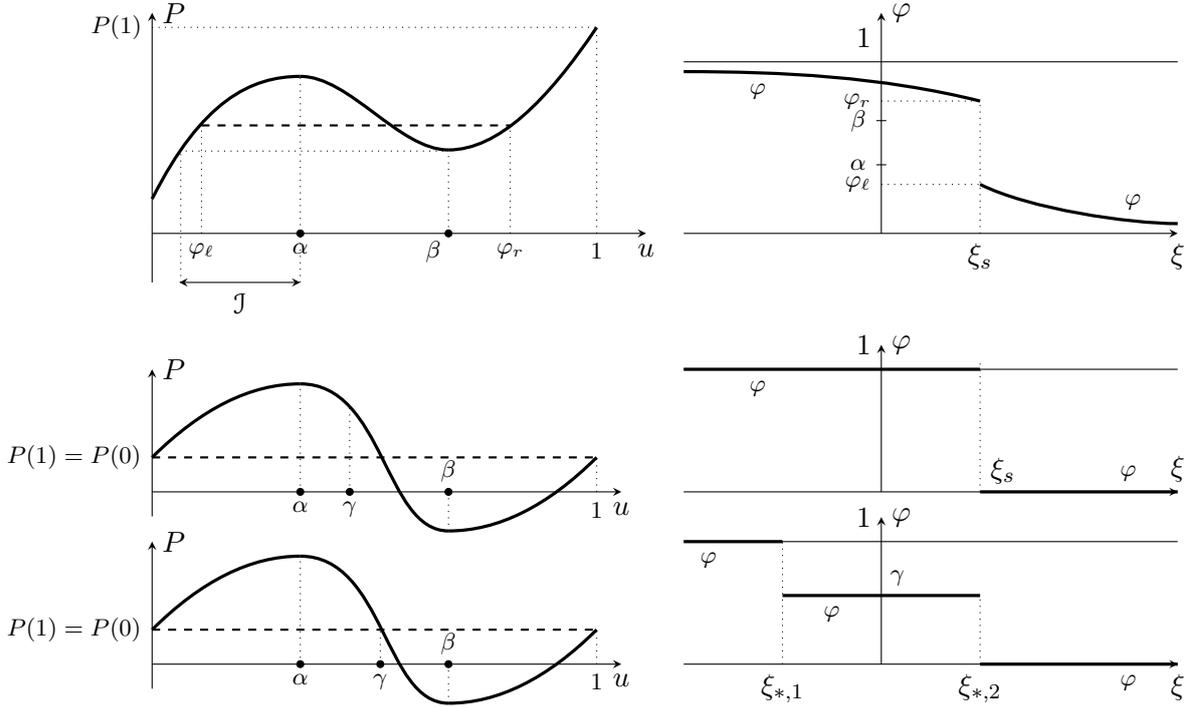

Every shock wavefront profile satisfying condition \eqref{e:phi-limits} is decreasing, see its definition in Section \ref{e:MR}; in general, this monotonicity is not strict. This happens, in particular, when either $0 \in \mathcal I$, or $\varphi_r=1$ satisfies \eqref{e:condition_P}, for some $\varphi_\ell \in \mathcal I$. This is immediate if $P(1)=P(0)$, but it also holds  when $P(1)>P(0)$. More precisely, if $P(\beta)\le P(0)<P(1)$, then equation \eqref{e:E} admits a shock wavefront such that $\phi(\xi)\equiv0$ for $\xi > \xi_s$, for some real $\xi_s$. Similarly, if $P(0)<P(1)\le P(\alpha)$, then \eqref{e:E} has a shock wavefront with $\phi(\xi)\equiv1$ for $\xi \le \xi_s$, for some real $\xi_s$. See Figure \ref{f:hatcheck-rem}.

\begin{figure}[htb]
\begin{center}
\begin{tikzpicture}[>=stealth, scale=0.65]

\draw[->] (-4,0) --  (6,0) node[below]{$\xi$} coordinate (x axis);
\draw[->] (0,0) -- (0,4.5) node[right]{$\phi$} coordinate (y axis);
\draw (-4,3.5) -- (6,3.5);
\draw[very thick] (-4,3.3) .. controls (-2,3.3) and (0,3.2).. (2,2.7) node[near start,below]{\footnotesize$\phi$};
\draw[very thick] (2,0) -- (6,0) node[near end,above]{\footnotesize$\phi$};
\draw[dotted] (2,0) node[below]{\footnotesize$\xi_s$} --(2,2.7);
\draw[dotted] (0,2.7) node[left]{\footnotesize$\phi_r$}--(2,2.7);
\draw (0,4.5) node[left]{$1$};
\draw(-0.1,2.3) node[left]{\footnotesize$\beta$}--(0.1,2.3);
\draw(-0.1,1.4) node[left]{\footnotesize$\alpha$}--(0.1,1.4);

\begin{scope}[xshift=350]
\draw[->] (-4,0) --  (6,0) node[below]{$\xi$} coordinate (x axis);
\draw[->] (0,0) -- (0,4.5) node[right]{$\phi$} coordinate (y axis);
\draw (-4,3.5) -- (6,3.5);
\draw[very thick] (2,1) .. controls (3,0.5) and (5,0.2) ..  (6,0.2) node[near end,above]{\footnotesize$\phi$};
\draw[very thick] (-4,3.5) -- (2,3.5) node[near start,below]{\footnotesize$\phi$};
\draw[dotted] (2,0) node[below]{\footnotesize$\xi_s$} --(2,3.5);
\draw[dotted] (0,1) node[left]{\footnotesize$\phi_\ell$}--(2,1);
\draw (0,4.5) node[left]{$1$};
\draw(-0.1,2.3) node[left]{\footnotesize$\beta$}--(0.1,2.3);
\draw(-0.1,1.4) node[left]{\footnotesize$\alpha$}--(0.1,1.4);

\end{scope}\end{tikzpicture}

\end{center}
\caption{\label{f:hatcheck-rem}{The profile $\phi$ when $P(\beta)\le P(0)<P(1)$ (left) and $P(0)<P(1)\le P(\alpha)$ (right).}}
\end{figure}
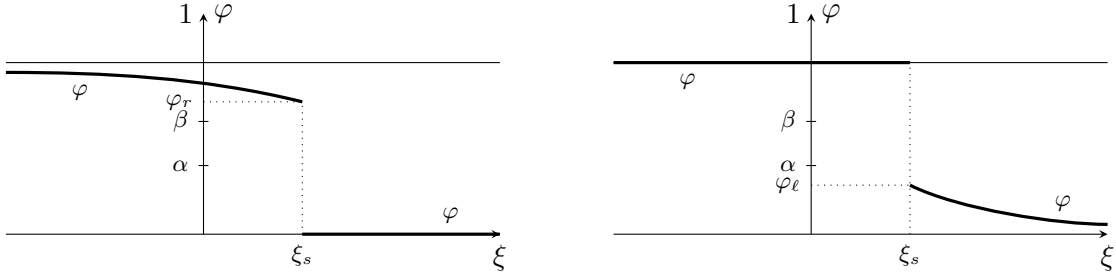

If $P(1)>P(0)$, then $c^*(\varphi_\ell)$ is continuous on the closed interval $\mathcal I$, and hence it is bounded. We refer to Proposition~\ref{prop:c*_bounds} for estimates of its bounds. In Proposition~\ref{prop:necess2} we show that 
\begin{equation}\label{e:phireta}
\varphi_r = \eta(\varphi_\ell)
\end{equation} 
for some strictly increasing function $\eta \in C^0(\mathcal I) \cap C^1(\mathrm{int}(\mathcal I))$. In Remark \ref{r:skkwe} we establish, in some cases, whether $c^*(\varphi_\ell)>0$ or $c^*(\varphi_\ell)<0$ for some $\varphi_\ell\in\mathcal{I}$.

The constraint \eqref{e:condition_P} was also required in \cite{Li2021, MTMWBH2}. A second constraint is expressed by \eqref{e:hypo}$_2$, namely
\begin{equation}\label{e:speed}
c^*(\phi_\ell)=-\frac{\displaystyle P(\varphi)'(\xi_s^-)-P(\phi)'(\xi_s^+)}{\phi(\xi_s^-)-\phi(\xi_s^+)}, \quad \phi_\ell=\phi(\xi_s^-), \, \phi_r= \phi(\xi_s^+),
\end{equation}
where $\xi_s$ is the jump point. All terms appearing in the previous estimate exist and the condition requires that the quantity $P'(\varphi) + c\varphi$ is continuous  across the shock (see Proposition~\ref{p:Pconditions}). Thus, it closely resembles the classical Rankine–Hugoniot condition for shocks in the hyperbolic setting (see Subsection~\ref{ss:concluding-remark}).

As already pointed out in \cite{Li2021, MTMWBH2}, condition \eqref{e:condition_P} does not uniquely determine the jump states $\phi_\ell$ and $\phi_r$. Since there are two degrees of freedom, namely $\phi_\ell$ and $\phi_r$,  an additional condition is generally required to uniquely single out the profiles.
Non-uniqueness of wavefronts also occurs when $P'\ge 0$ and $g$ is monostable. In this case, the profiles are regular and the admissible speeds form the interval $[c^*, \infty)$. However, the profile with minimal speed $c^*$ enjoys suitable stability properties (see \cite[\S 4.5]{Aronson-Weinberger}). In analogy with the previous case, one may adopt here the criterion of minimizing the speed $c^*$. This is possible because $c^*(\phi_\ell)$ is a strictly decreasing and continuous function on the closed interval $\mathcal{I}$ (see Theorem \ref{t:main}).
In particular  $c^*(\phi_{\ell})$  is minimum at 
\[
\phi_\ell^*=\max\mathcal{I}
=
\left\{
\begin{array}{ll}
\alpha & \hbox{ if }P(\alpha)<P(1),
\\
\eta^{-1}(1)
& \hbox{ if } P(\alpha)>P(1).
\end{array}
\right.
\]
When selecting the minimal-speed wavefront, its corresponding profile remains in $[0,\alpha]$ for as long as possible, and jumps otherwise.

\noindent Another possibility is to further  assume, as in \cite{MTMWBH2}, the so called \emph{continuous diffusivity rule}: $D(\phi_{\ell})=D(\phi_r)$. When $P(\alpha)\le P(1)$, this condition is always satisfied by a unique pair $(\phi_{\ell}, \phi_r)$ fulfilling \eqref{e:condition_P} (see \cite[Proposition 2]{MTMWBH2}). However, such a pair may fail to exist if  $P(\alpha)> P(1)$ as we will show in Proposition \ref{p:emm} (see also Remark \ref{r:P1alpha} and Figure  \ref{f:inter}).

The assumption in Theorem \ref{t:main} can be expressed in terms of $D$. In particular, we refer to Figure \ref{f:Dcont} for the geometrical interpretation of condition \eqref{e:condition_P}. 
No shock wavefront satisfying \eqref{e:phi-limits} can exist when $\int_{0}^{1}D(u) \, du<0$. Instead, when $\int_{0}^{1}D(u) \, du=0$, equation \eqref{e:E} has shock wavefronts satisfying \eqref{e:phi-limits} only with speed $c=0$ and profile $\phi$ with $\phi'=0$ a.e.

\begin{figure}[htb]
\begin{center}
\begin{tikzpicture}[>=stealth, scale=0.8]
\draw[->] (0,0) --  (10,0) node[below]{$u$} coordinate (x axis);
\draw[->] (0,0) -- (0,3) node[right]{$D$} coordinate (y axis);
\draw (0,0) -- (0,-1.5);
\draw[thick] (0,1) .. controls (1,2) and (2,2) .. (3,0) node[left=3, below=0]{{\footnotesize{$\alpha$}}}; 
\draw[thick] (3,0) .. controls (4,-2) and (5,-2) .. (6,0); 

\filldraw[fill=gray!20!cyan, draw=black, nearly transparent] (3,0) .. controls (4,-2) and (5,-2) .. (6,0)  -- (3,0)--cycle;

\draw[thick] (6,0) node[right=2, below=0]{{\footnotesize{$\beta$}}} .. controls (7,2) and (8,2) .. (9,1); 
\draw[dotted] (9,1) -- (9,0) node[below]{{\footnotesize{$1$}}};
\draw[dotted] (1.7,1.5)--(6.7,1);
\draw[dotted] (1.7,0) node[below]{\footnotesize$\phi_\ell$}-- (1.7,1.5);
\filldraw[fill=gray!20!yellow, draw=black, nearly transparent] (1.7,1.55) .. controls (2.5,1) and (2.7,0.5) .. (3,0)  -- (1.7,0) -- (1.7,1.55)--cycle;

\draw[dotted] (6.6,0) node[below]{\footnotesize$\phi_r$}-- (6.6,1);
\filldraw[fill=gray!20!yellow, draw=black, nearly transparent] (6,0) .. controls (6.5,1) and (6.7,1) .. (6.6,1)  -- (6.6,0) -- (6,0)--cycle;
\end{tikzpicture}

\end{center}
\caption{\label{f:Dcont}{Condition \eqref{e:condition_P} states that the area of the yellow region equals that of the blue region.}}
\end{figure}
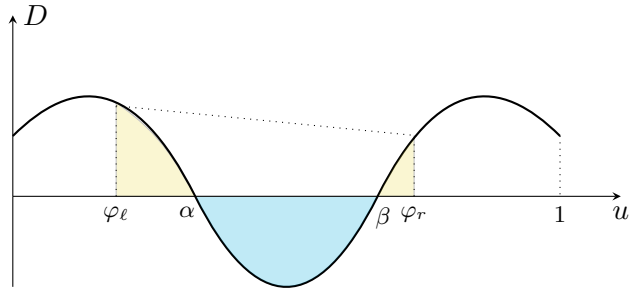

 Theorem \ref{t:main} holds, with the additional assumption $P(\alpha)>P(\beta)$, by only assuming that $P$ is defined in $[0,\alpha]\cup[\beta,1]$ but not necessarily in $(\alpha,\beta)$. The further assumption that $P$ is defined and decreasing in $(\alpha,\beta)$ allows us, on the one hand, to state the equal area condition for $D$. On the other hand, it rules out the possibilities of other shock wavefronts valued in $(\alpha,\beta)$, see Lemma \ref{l:signC} and Proposition \ref{prop:necess}.

Here follows an outline of the paper. In Section \ref{e:MR} we provide the main definitions and prove some properties of shock traveling waves under the only assumption (R). Section \ref{s:RSWFs} focuses on regular semi-wavefronts. Section \ref{s:WF} contains the proof of Theorem \ref{t:main} as long as several related results. In particular, Subsection \ref{ss:speed} provides some estimates for the lower and upper bound of $c^*$ while Subsection \ref{ss:concluding-remark} discusses some relationships with balance laws.
\section{An invasion model involving isolated and grouped agents}\label{s:modello}
\setcounter{equation}{0}
As observed in the Introduction, one of the motivations for studying equation \eqref{e:E}, with $P$ and $g$ satisfying (R) and (P)--(g), is the biological model introduced in \cite{JBMS}. There, the diffusivity $D$ and the reaction term $g$ depend upon the six positive parameters $D_{i,g}$, $k_{i,g}$, $\lambda_{i,g}$ and have the following explicit expressions:
\begin{align}
\label{e:coeffs D}
D(u) &= D_i\left(1-4u+3u^2\right) + D_g\left(4u-3u^2\right),
\\
\label{e:coeffs g}
g(u) &= \lambda_g u (1-u) +\left[\lambda_i-\lambda_g- \left(k_i - k_g\right) \right] u(1-u)^2 -k_g u.
\end{align}

\begin{lemma}[\cite{BCM5}]\label{l:la}
The potential $P$ with $P'=D$ in \eqref{e:coeffs D} satisfies {\rm (P)} if and only if $D_i>4D_g>0$. In this case we have
\begin{equation}\label{e:alphabeta}
\alpha=\frac23 - \frac\omega3 \quad \hbox{ and }\quad \beta=\frac23 + \frac\omega3, \qquad  \hbox{ for }\  \omega :=\sqrt{\frac{D_i-4D_g}{D_i-D_g}}\in (0,1).
\end{equation}
The reaction term $g$ in \eqref{e:coeffs g} satisfies {\rm (g)} with \eqref{eq:condition} if and only if 
\begin{equation}\label{e:acb}
k_g=0, \, \lambda_g>0 \ \mbox{ and } \ \frac{1-\omega}{2+\omega} < \frac{\lambda_g}{k_i-\lambda_i}<\frac{1+\omega}{2-\omega}.
\end{equation}
\end{lemma}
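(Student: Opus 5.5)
The plan is a direct computation, treating $D$ and $g$ separately. The statement is quoted from \cite{BCM5}, but it reduces to elementary algebra on the explicit expressions \eqref{e:coeffs D} and \eqref{e:coeffs g}.

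\emph{Diffusivity.} First rewrite \eqref{e:coeffs D} as
\[
D(u)=D_i-(D_i-D_g)(4u-3u^2)=3(D_i-D_g)u^2-4(D_i-D_g)u+D_i .
\]
Then $D(0)=D_i>0$ and $D(1)=D_g>0$. Condition (P) requires $D<0$ somewhere in $(0,1)$, with exactly one sign change from positive to negative and one back, at two distinct points. Since $D$ is a quadratic that is positive at both endpoints, this happens if and only if $D$ is convex ($D_i>D_g$) and its minimum value is negative.

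\begin{itemize}
\item If $D_i\le D_g$, then $D$ is concave or affine. It is therefore bounded below by $\min\{D(0),D(1)\}>0$, so (P) fails.
\item If $D_i>D_g$, the vertex is at $u=2/3$ and $D(2/3)=(4D_g-D_i)/3$. This is negative if and only if $D_i>4D_g$.
\item In the borderline case $D_i=4D_g$ there is a double root, so $D$ does not change sign and (P) fails.
\end{itemize}

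Hence (P) holds if and only if $D_i>4D_g>0$. In that case the quadratic formula gives the roots
\[
u=\frac23\pm\frac13\sqrt{4-\frac{3D_i}{D_i-D_g}}=\frac23\pm\frac\omega3 .
\]
Since $\omega\in(0,1)$, both roots lie in $(0,1)$, which proves \eqref{e:alphabeta}.

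\emph{Reaction term.} Evaluating \eqref{e:coeffs g} at $u=1$ gives $g(1)=-k_g$, so (g) forces $k_g=0$. With $k_g=0$ we can factor
\[
g(u)=u(1-u)\,h(u),\qquad h(u)=\lambda_g+A(1-u),\qquad A=\lambda_i-\lambda_g-k_i .
\]
On $(0,1)$ the sign of $g$ is the sign of the affine function $h$. So (g) holds if and only if $h$ changes sign from negative to positive at some $\gamma\in(0,1)$, which means $h(0)<0<h(1)$.

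\begin{itemize}
\item The condition $h(1)>0$ is $\lambda_g>0$.
\item The condition $h(0)<0$ is $\lambda_g+A<0$, that is $\lambda_i-k_i<0$. This makes $r:=\lambda_g/(k_i-\lambda_i)$ well defined and positive.
\item Solving $h(\gamma)=0$ then gives $\gamma=1/(1+r)$.
\end{itemize}

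Finally, condition \eqref{eq:condition} reads $\alpha<\gamma<\beta$. Since $r\mapsto 1/(1+r)$ is decreasing, this is equivalent to
\[
\frac{1-\beta}{\beta}<r<\frac{1-\alpha}{\alpha}.
\]
Substituting the values of $\alpha$ and $\beta$ from \eqref{e:alphabeta}, this becomes
\[
\frac{1-\omega}{2+\omega}<r<\frac{1+\omega}{2-\omega},
\]
which is \eqref{e:acb}. The converse direction follows by reading the same equivalences backwards.

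I expect no real obstacle. The only points needing care are ruling out the degenerate cases for $D$ ($D_i\le D_g$ and $D_i=4D_g$) and noting that $k_i>\lambda_i$ is implied by, not assumed in, \eqref{e:acb}.
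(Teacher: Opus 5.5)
Your proposal is correct and essentially the natural verification. The paper gives no proof of its own here, since the lemma is quoted from an earlier work, but your algebra reproduces exactly the factorizations the paper later relies on: $D(u)=3(D_i-D_g)(u-\alpha)(u-\beta)$ and, with $k_g=0$, $g(u)=(k_i-\lambda_i+\lambda_g)\,u(1-u)(u-\gamma)$, where $\gamma=\frac{k_i-\lambda_i}{k_i-\lambda_i+\lambda_g}=\frac{1}{1+r}$. You also handle the degenerate cases for $D$ correctly, and you correctly note that $k_i>\lambda_i$ is forced by \eqref{e:acb} because $\lambda_g>0$ and $\frac{1-\omega}{2+\omega}>0$, so nothing is missing.
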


Using Theorem \ref{t:main}, we can now characterize the presence of shock wavefronts in the model defined by \eqref{e:coeffs D}–\eqref{e:coeffs g}. Because of the parameter $\omega$, we denote by $\mathcal{I}_\omega$ the interval $\mathcal{I}$ in the statement of Theorem \ref{t:main}.

\begin{proposition}
\label{prop:A}
Consider  \eqref{e:coeffs D}-\eqref{e:coeffs g}, with $D_i>4D_g>0$ and \eqref{e:acb}. 
Assume 
\begin{enumerate}
\item
either $\omega \in (0,1/2]$ and
\[
\varphi_\ell \in \mathcal{I}_\omega:= \left[\alpha - \frac{\omega}{3}, \alpha\right], \quad \varphi_r = \eta(\varphi_\ell) \in \left[ \beta, \beta+\frac{\omega}{3}\right];
\]
\item or $\omega \in (1/2,1)$ and 
\[
\varphi_\ell \in  \mathcal{I}_\omega:=\left[\alpha -\frac{\omega}{3}, \frac12 -\frac{\sqrt{4\omega^2-1}}{2\sqrt{3}} \right]\subset (0,\alpha), \quad  \varphi_r =\eta(\varphi_\ell)\in [\beta,1].
\]
\end{enumerate}
Then, for every $\phi_{\ell}\in \mathcal{I}_\omega$,  Equation \eqref{e:E} admits a unique shock wavefront satisfiyng \eqref{e:phi-limits}, with speed $c^*(\varphi_\ell)$; its profile jumps from $\varphi_r$ to $\varphi_\ell,  \, c^*(\varphi_\ell)$ is continuous and strictly decreasing and
\begin{equation}
\label{eta_J}
\eta(u):=\frac{2-u+\sqrt{\Delta_u}}{2} \quad \mbox{ with } \quad 
\Delta_u:= -3u^2+4u-\frac 43 +\frac{4\omega^2}{3}.
\end{equation} 
Equation \eqref{e:E} admits no other shock or regular wavefront satisfying \eqref{e:phi-limits}.

\end{proposition}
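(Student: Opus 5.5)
The plan is to reduce the proposition to Theorem~\ref{t:main}(1) by computing the potential explicitly. Then $\mathcal I_\omega$ and $\eta$ come from elementary algebra on a cubic polynomial.

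\emph{Step 1: the potential.} By Lemma~\ref{l:la}, the hypotheses $D_i>4D_g>0$ and \eqref{e:acb} give (R), (P) and (g), with $\alpha,\beta$ as in \eqref{e:alphabeta}. Integrating \eqref{e:coeffs D} gives
\[
P(u)=D_iu-(D_i-D_g)(2u^2-u^3),
\]
so $P(1)-P(0)=D_g>0$ and we are in case (1) of Theorem~\ref{t:main}. It is convenient to normalize by $D_i-D_g>0$ and write $P/(D_i-D_g)=ku-2u^2+u^3$. From the definition of $\omega$ one finds $D_g/(D_i-D_g)=k-1$, hence $\omega^2=k-4(k-1)$, that is $k=(4-\omega^2)/3$.

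\emph{Step 2: the function $\eta$.} For $\varphi_r\neq\varphi_\ell$, condition \eqref{e:condition_P} reads $P(\varphi_r)-P(\varphi_\ell)=0$. Dividing by $\varphi_r-\varphi_\ell$ gives
\[
\varphi_r^2+(\varphi_\ell-2)\varphi_r+\varphi_\ell^2-2\varphi_\ell+k=0 .
\]
Its discriminant is $-3\varphi_\ell^2+4\varphi_\ell+4-4k=\Delta_{\varphi_\ell}$ as in \eqref{eta_J}. We must select the root lying in $[\beta,1]$, which is the larger one. This yields $\eta(u)=(2-u+\sqrt{\Delta_u})/2$.

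\emph{Step 3: the interval.} I would identify $\mathcal I$ as the set of $\varphi_\ell\in[0,\alpha]$ such that $P(\varphi_\ell)\in[P(\beta),P(1)]$, i.e.\ such that some $\varphi_r\in[\beta,1]$ solves \eqref{e:condition_P}. This relies on Theorem~\ref{t:main} and Proposition~\ref{prop:necess2}, together with the description of $\max\mathcal I$ after the theorem.

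For the left endpoint, $u^3-2u^2+\cdots-P(\beta)$ has $\beta$ as a double root, since $P'(\beta)=0$. The roots of the cubic sum to $2$, so the third root is $2-2\beta=\frac23-\frac{2\omega}3=\alpha-\frac\omega3\in(0,\alpha)$, which is the left endpoint.

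For the right endpoint, the same computation at $\alpha$ gives $\eta(\alpha)=2-2\alpha=\beta+\frac\omega3$. This is admissible iff $\beta+\frac\omega3\le1$, i.e.\ iff $\omega\le\frac12$, equivalently $P(\alpha)\le P(1)$. When $\omega>\frac12$, the right endpoint is instead $\eta^{-1}(1)$. Setting $\varphi_r=1$ in the quadratic of Step 2, now read as an equation for $\varphi_\ell$, gives $\varphi_\ell^2-\varphi_\ell+k-1=0$ with discriminant $(4\omega^2-1)/3$. Its smaller root is $\frac12-\frac{\sqrt{4\omega^2-1}}{2\sqrt3}<\alpha$. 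The range of $\varphi_r$ then follows from monotonicity of $\eta$.

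\emph{Step 4: remaining claims and main obstacle.} Continuity and strict decrease of $c^*$, uniqueness, and the absence of other shock wavefronts follow directly from Theorem~\ref{t:main}(1). Absence of regular wavefronts follows from \eqref{e:KR-cn1}, since $\gamma\in(\alpha,\beta)$.

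The main obstacle is Step 3. It requires the exact characterization of $\mathcal I$, in particular that its left endpoint is given by $P(\varphi_\ell)=P(\beta)$ whenever $P(\beta)\ge P(0)$, which holds here because $\alpha-\omega/3>0$. I would extract this from the construction in Proposition~\ref{prop:necess2}, checking that every $\varphi_\ell$ with $\eta(\varphi_\ell)\in[\beta,1]$ indeed produces a wavefront.
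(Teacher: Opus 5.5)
Your proposal is correct and follows essentially the paper's route: compute $P$ explicitly, factor $P(\varphi_r)-P(\varphi_\ell)$ into the quadratic with discriminant $\Delta_{\varphi_\ell}$, and take its larger root as $\eta$. You then locate the endpoints of $\mathcal I_\omega$, using the double roots of the cubic at $\beta$ and $\alpha$ where the paper uses the equivalent set $\{\Delta_u\ge 0\}\cap[0,\alpha]$, and invoke Theorem~\ref{t:main}(1). The one step you leave unjustified is why the smaller root never lies in $[\beta,1]$, and it takes one line: the roots sum to $2-\varphi_\ell\le 2\beta$ on $\mathcal I_\omega$. Your explicit appeal to \eqref{e:KR-cn1} to exclude regular wavefronts makes precise a point the paper's proof leaves implicit.
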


We refer to Figure \ref{f:triangle} for an illustration of Proposition \ref{prop:A}.

\begin{figure}[htbp]
    \centering
    \begin{tabular}{cc}
    \includegraphics[width=6.5cm]{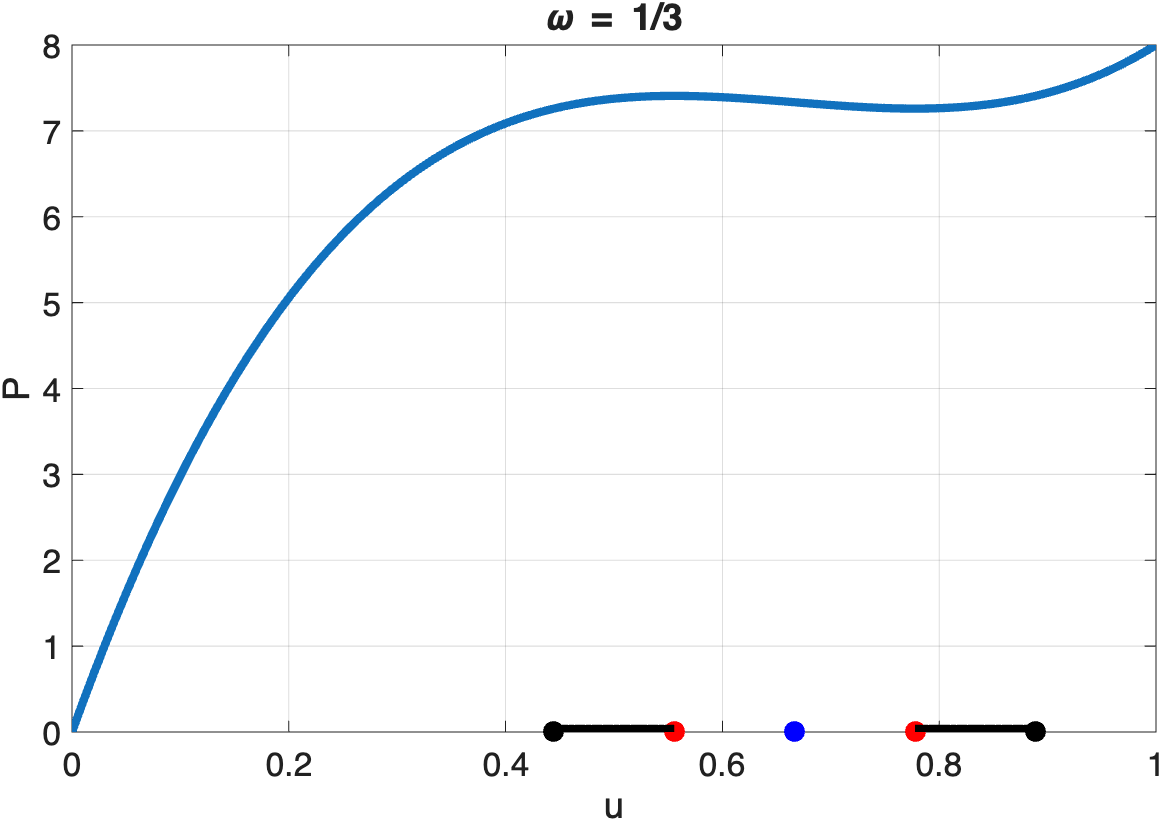}
    &
    \includegraphics[width=6.5cm]{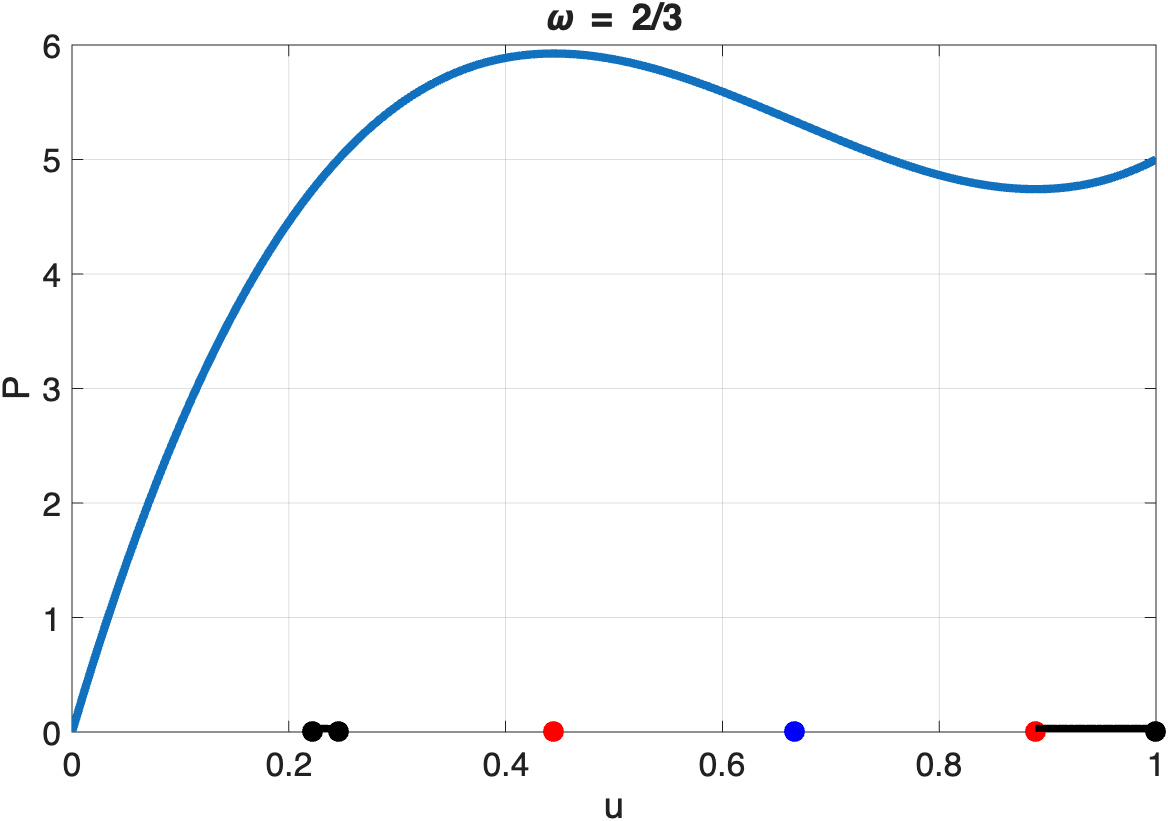}
    \end{tabular}
 \caption{Plots of $P$. Red points denote $\alpha$ and $\beta$, the blue point is the point $(\frac23,0)$, black points are the extrema of the intervals $\mathcal{I}$ and $\eta(\mathcal{I})$. 
 On the left:  $D_i=35$, $D_g=8$, so that $\omega = 1/3$. On the right, $D_i=32$, $D_g=5$ so that $\omega=2/3$.
}
    \label{f:triangle}
    \end{figure}

\begin{proof}
We rewrite formula \eqref{e:coeffs D} by exploiting \eqref{e:alphabeta}; we obtain 
\begin{align}
D(u)&=3(D_i-D_g)\left(u^2 -(\alpha+\beta)u+\alpha\beta \right)
= (D_i-D_g)\left(3u^2 -4 u+\frac{4-\omega^2}{3}\right),
\label{e:DDD}
\\
P(u)&=(D_i-D_g)\left(u^3 - 2u^2+\frac{4-\omega^2}{3}u \right)=(D_i-D_g)u\left(u^2 - 2u+\frac{4-\omega^2}{3}\right),
\nonumber
\end{align}
by choosing $C=0$ as integration constant. Notice that 
\begin{equation}\label{e:eoer}
u^2 - 2u+\frac{4-\omega^2}{3}>0, \quad u\in\R.
\end{equation}
%
Therefore $P(1)=(D_i-D_g)\frac{1-\omega^2}{3}>P(0)=0$ and Theorem \ref{t:main} {\em Item 1} applies.
\par

It remains to write down the explicit expressions of $\mathcal I_\omega$ and $\varphi_r=\varphi_r(\varphi_\ell)$. Notice that
\begin{equation} 
\label{e:A}
\begin{aligned}
\frac{P(u)-P(v)}{D_i-D_g}&
=(u-v)\left(u^2+uv+v^2-2(u+v)+ \frac{4-\omega^2}{3}\right).
\end{aligned}
\end{equation}
It follows that $P(u)=P(v)$ if and only if
	\begin{equation}
	\label{e:AAB}
	v^2+v(u-2)+\left(u^2-2\,u+\frac{4-\omega^2}{3}\right)=0.
	\end{equation}
Then \eqref{e:AAB} admits solutions if and only if $\Delta_u \ge 0$, see \eqref{eta_J}, which is equivalent to $u \in \left[\frac{2-2\omega}{3}, \frac{2+2\omega}{3}\right]$.
Intersecting with the interval $[0,\alpha]$, this gives
\[
\mathcal I_0:=\left[\Delta_u \ge 0\right] \cap [0,\alpha]= \left[\frac{2-2\omega}{3}, \frac{2-\omega}{3}\right]=\left[\alpha-\frac{\omega}{3}, \alpha\right].
\]

For every $u \in \mathcal I_0$, we can solve \eqref{e:AAB} with respect to $v$, obtaining
\[
v_\pm=\frac{2-u\pm \sqrt{\Delta_u}}{2}.
\]
For $u \in \mathcal I_0$ with $u>\frac{2-2\omega}{3}$, we have $\Delta_u>0$ and $
v_-=\frac{2-u-\sqrt{\Delta_u}}{2}<\frac{2-u}{2}< \frac{2-\frac{2-2\omega}{3}}{2}=\frac{2+\omega}{3}=\beta$.
At $u=\frac{2-2\omega}{3}$ one has $\Delta_u=0$ and $v_-=v_+=\beta$.
So the unique solution in $[\beta,1]$ of \eqref{e:AAB} is  
$v_+=\eta(u)$, with $\eta$ as in \eqref{eta_J}.
Direct computations show that $\eta(u)$ is strictly increasing. This property also holds in the general case (see Proposition \ref{prop:necess2}). Hence, if $u \in \mathcal I_0$, then 
\[
\eta(u) \in \left[\eta\left(\frac{2-2\omega}{3}\right), \eta\left(\alpha \right)\right] = \left[\frac{2+\omega}{3},\frac{2+2\omega}{3}\right]=\left[\beta, \beta+\frac{\omega}{3}\right].
\]
Since the last interval is contained in $[0,1]$ iff $\omega \in (0,1/2]$, an application of Theorem \ref{t:main} concludes the proof in this case.

In the case $\omega > 1/2$ we have $\frac{2+2\omega}{3}>1$. We need to find $u^* \in\left[\frac{2-2\omega}{3}, \alpha\right]$ such that
\[
\mathcal I_\omega = \left[\frac{2-2\omega}{3},\, u^*\right]
\qquad \text{and} \qquad
\eta(\mathcal I) = \left[\beta,\, 1\right].
\]
We claim that $u^*=\frac12-\frac{\sqrt{4\omega^2-1}}{2\sqrt{3}}$. Surely $u^*$ belongs to the above interval. By the strict monotonicity of $\eta$, the desired $u^*$ does exist, and moreover it must be the unique value in $[0,\alpha]$ such that
$1=\eta(u^*)$, which is equivalent to $u^* =\sqrt{\Delta_{u^*}}$.
The solutions of $s^2=\Delta_s$ are the $s$ such that
\[
s^2-s+\frac{1-\omega^2}{3}=0 \iff s_\pm
=\frac12\pm \frac{\sqrt{4\omega^2-1}}{2\sqrt{3}}.
\]
Since $s_+ > \alpha$, then $u^*=s_-$. The proof is completed by an application of Theorem \ref{t:main}.
\end{proof}

\begin{remark}\label{r:P1alpha}
{
\rm
By \eqref{e:A} we deduce
$
P(1)-P(\alpha)
=-(D_i-D_g)(1+\omega)\left(2\omega^2+\omega-1\right)/27$.
Then $P(1)\ge P(\alpha)$ iff $2\omega^2+\omega-1\le0$, i.e., iff $-1\le\omega\le\frac12$. 
Moreover $P(\beta)>P(0)=0$ by \eqref{e:eoer}. As a conclusion, Proposition \ref{prop:c*_bounds} applies if $\omega\le\frac12$, so that estimates \eqref{e:c1c2} for $c^*$ hold.
}
\end{remark}

We already observed in the Introduction that uniqueness can be recovered from the intrinsic continuum of admissible jumps (and speeds) by imposing algebraic conditions. In the following proposition, we focus on the constraint
\begin{equation}
\label{e:D_cont}
D(\varphi_\ell)=D(\varphi_r).
\end{equation}
Recall that $D=P'$. Figure~\ref{f:triangle} illustrates a case where the constraint is satisfied (left) and one where it is not (right). More generally, the following result holds.
\begin{proposition}\label{p:emm}
Consider \eqref{e:coeffs D}-\eqref{e:coeffs g}, with $D_i>4D_g>0$ and \eqref{e:acb}. If $\omega \in (0, 1/\sqrt{3}]$, then there is a unique shock wavefront satisfying \eqref{e:phi-limits}  and \eqref{e:D_cont}; if $\omega \in (1/\sqrt{3},1)$, there are none.

\end{proposition}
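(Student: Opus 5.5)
By Proposition~\ref{prop:A}, every shock wavefront satisfying \eqref{e:phi-limits} has a single jump from $\varphi_r=\eta(\varphi_\ell)$ to some $\varphi_\ell\in\mathcal I_\omega$, with \eqref{e:condition_P} holding. Moreover, distinct $\varphi_\ell$ give distinct wavefronts, and each is unique up to shifts. So the plan is to reduce the statement to an algebraic question: count the pairs $(\varphi_\ell,\varphi_r)$ with $\varphi_\ell\in[0,\alpha]$ and $\varphi_r\in[\beta,1]$ that satisfy both \eqref{e:condition_P} and \eqref{e:D_cont}. Then check whether the resulting $\varphi_\ell$ lies in $\mathcal I_\omega$.

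\emph{Step 1: use \eqref{e:D_cont}.} By \eqref{e:DDD}, $D$ is a quadratic symmetric about $u=\tfrac23$. Since $\varphi_\ell\le\alpha<\beta\le\varphi_r$, we have $\varphi_\ell\neq\varphi_r$. Hence \eqref{e:D_cont} is equivalent to $\varphi_\ell+\varphi_r=\tfrac43$.

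\emph{Step 2: use \eqref{e:condition_P}.} By \eqref{e:A} with $u\ne v$, this reads $u^2+uv+v^2-2(u+v)+\frac{4-\omega^2}{3}=0$. Write $u^2+uv+v^2=(u+v)^2-uv$ and insert $u+v=\tfrac43$. This gives
\[
uv=\frac{4-3\omega^2}{9}.
\]
So $\varphi_\ell,\varphi_r$ are the roots of $s^2-\tfrac43 s+\frac{4-3\omega^2}{9}=0$, namely $\tfrac23\pm\frac{\omega}{\sqrt3}$. The only possible pair is therefore
\[
\varphi_\ell=\tfrac23-\tfrac{\omega}{\sqrt3},\qquad \varphi_r=\tfrac23+\tfrac{\omega}{\sqrt3}.
\]

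\emph{Step 3: admissibility.} We check the required ranges.
\begin{itemize}
\item Since $\tfrac13<\tfrac1{\sqrt3}<\tfrac23$, we get $\alpha-\tfrac\omega3=\tfrac{2-2\omega}{3}<\varphi_\ell<\tfrac{2-\omega}{3}=\alpha$. Similarly $\varphi_r>\beta$.
\item The remaining constraint is $\varphi_r\le1$, which holds iff $\omega\le 1/\sqrt3$.
\end{itemize}

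For $\omega\le\tfrac12$, $\mathcal I_\omega=[\alpha-\omega/3,\alpha]$, so $\varphi_\ell\in\mathcal I_\omega$ directly. For $\omega\in(\tfrac12,1)$, membership $\varphi_\ell\in\mathcal I_\omega$ is equivalent to $\eta(\varphi_\ell)\le1$. This follows from the strict monotonicity of $\eta$ and $\eta(\max\mathcal I_\omega)=1$. Since $\eta(\varphi_\ell)=\varphi_r$, this is again $\omega\le1/\sqrt3$. The endpoint case $\omega=1/\sqrt3$ gives $\varphi_r=1$, which is allowed because $\mathcal I_\omega$ is closed.

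Thus for $\omega\in(0,1/\sqrt3]$ exactly one admissible $\varphi_\ell$ exists, and Proposition~\ref{prop:A} yields a unique wavefront. For $\omega>1/\sqrt3$ the only candidate has $\varphi_r>1$, so no wavefront satisfies \eqref{e:D_cont}.

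The main obstacle is minor. It is making sure the bookkeeping between the two descriptions of $\mathcal I_\omega$ is consistent in the case $\omega>\tfrac12$. I would handle it through the monotonicity of $\eta$, as above, rather than by comparing $\varphi_\ell$ with the explicit $u^*$ from Proposition~\ref{prop:A}.
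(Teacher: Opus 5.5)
Your proof is correct and follows essentially the same route as the paper. The symmetry of $D$ about $\tfrac23$ turns \eqref{e:D_cont} into $\varphi_\ell+\varphi_r=\tfrac43$; combining this with $P(\varphi_\ell)=P(\varphi_r)$ gives the single candidate $\varphi_\ell=\tfrac23-\tfrac{\omega}{\sqrt3}$, whose membership in $\mathcal I_\omega$ is then checked. The differences are only in execution, and both are slightly cleaner than the paper's version: you obtain the candidate via Vieta ($uv=\tfrac{4-3\omega^2}{9}$) instead of squaring $\tfrac23-u=\sqrt{\Delta_u}$, and for $\omega>\tfrac12$ you reduce membership to $\varphi_r\le1$ through the monotonicity of $\eta$ rather than comparing explicitly with $\tfrac12-\tfrac{\sqrt{4\omega^2-1}}{2\sqrt3}$.
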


\begin{proof}
By \eqref{e:DDD} we see that \eqref{e:D_cont} is equivalent to 
\begin{equation}\label{e:akdd}
\left(u-\eta(u)\right)\left(u+\eta(u)-(\alpha+\beta)\right) = 0,
\end{equation}
for $\eta$ as in \eqref{eta_J}, that is $u+\eta(u)-4/3=0$. From \eqref{eta_J}$_1$ we obtain $-u+\frac 23 =\sqrt{\Delta_u}$. Notice that $u \in \mathcal I_{\omega}$ implies $u\le \alpha=\frac 23-\frac{\omega}{3}$, and hence $-u+\frac 23>0$. Using \eqref{eta_J}$_2$ we get $9u^2-12 u+4-3\omega^2=0$, which yields 
\[
u=\frac 23 -\frac{\omega}{\sqrt 3},
\]
since the other root does not belong to $\mathcal I_{\omega}$. It remains to verify that $\frac 23 -\frac{\omega}{\sqrt 3} \in \mathcal I_\omega$.
This holds whenever $\omega \in (0, 1/2)$, since in this case  $\mathcal I_\omega =  \left[\frac 23 -\frac{2\omega}{3},\, \frac 23 -\frac{\omega}{3}\right]$. 
On the other hand, when $\omega \in (1/2, 1)$, we have $\mathcal I_\omega=\left[\frac 23 -\frac{2\omega}{3},\, \frac 12 -\frac{\sqrt{4\omega^2 -1}}{2\sqrt 3}\right]$. Moreover  the estimate 
\[
\frac 23 -\frac{\omega}{\sqrt 3}\le \frac 12 -\frac{\sqrt{4\omega^2 -1}}{2\sqrt 3}
\]
holds  only for $\omega \le \frac{1}{\sqrt 3}$.
\end{proof}
\begin{figure}[htbp]
    \centering
    \begin{tabular}{cc}
    \includegraphics[width=6cm]{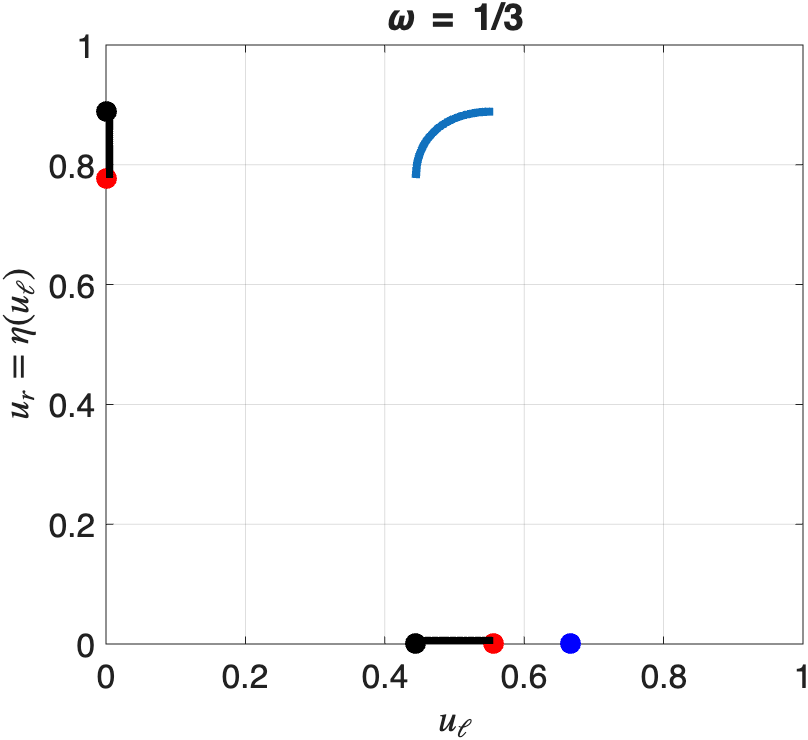}
    &
    \includegraphics[width=6cm]{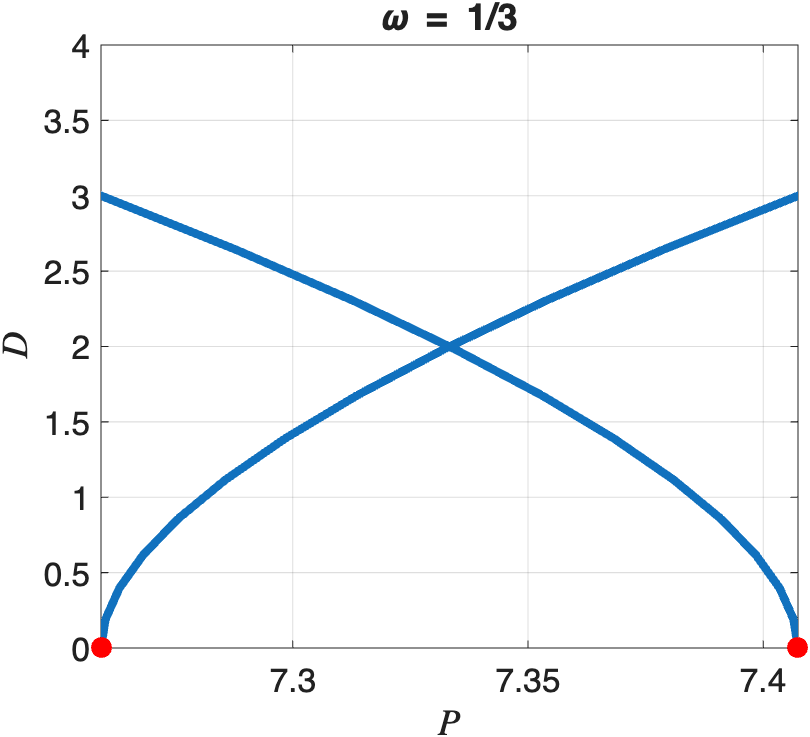}
\\
    \includegraphics[width=6cm]{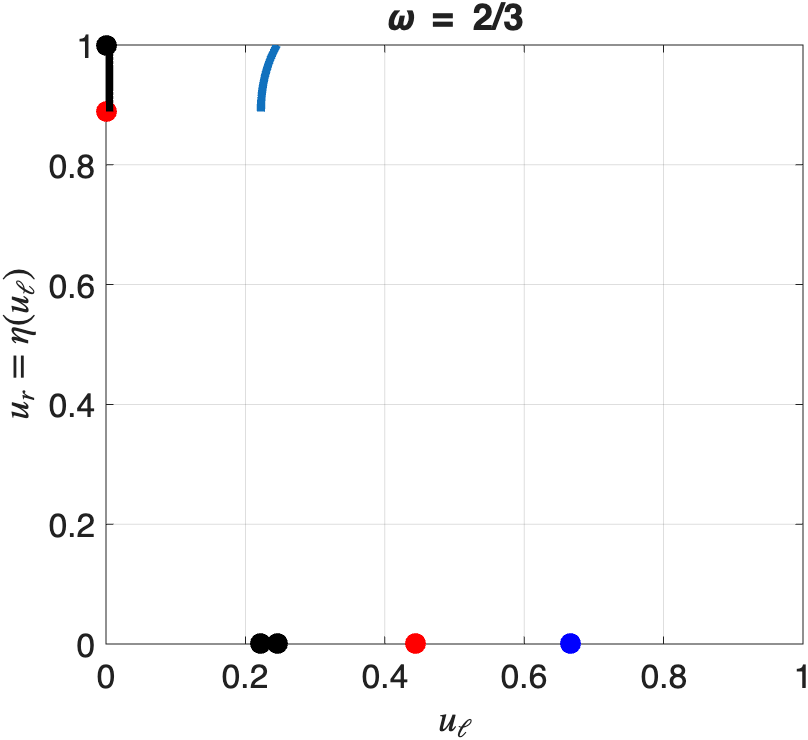}
    &
    \includegraphics[width=6cm]{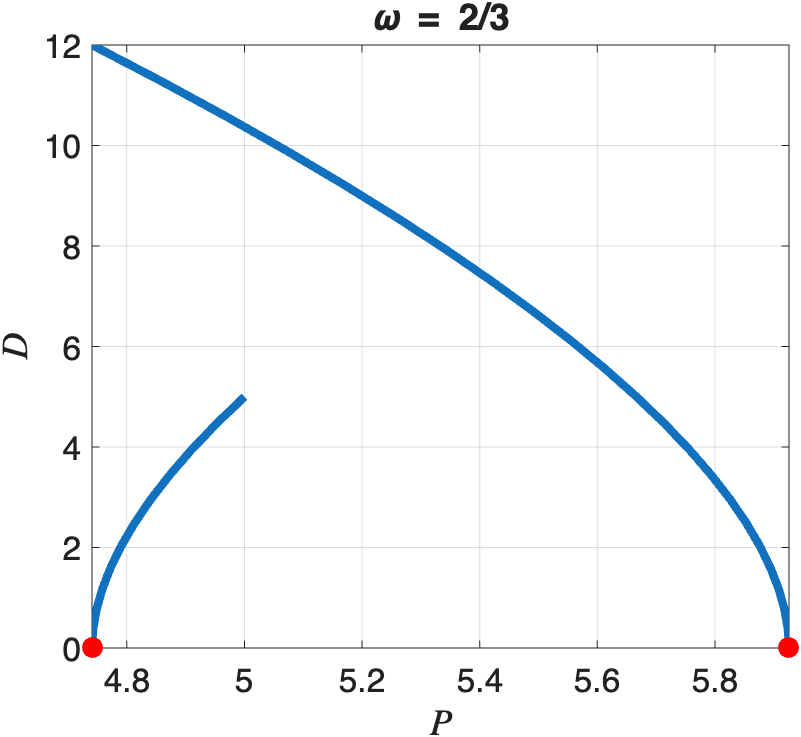}
    \end{tabular}
 \caption{Plots of the functions $\eta$ and $P$, $D$. On the left column the plot of $\eta$, on the right the plot of $D$ vs. $P$, where the red points denote $P(\beta)$ and $P(\alpha)$. In the top line the case $0<\omega<\frac12$, in the bottom line the case $\frac{1}{\sqrt{3}}<\omega<1$. Notation and values of $D_i$, $D_g$ are as in Figure \ref{f:triangle}. In the case $\frac12<\omega<\frac{1}{\sqrt{3}}$ the plot of $\eta$ is analogous to that in the second line, but $P$ and $D$ intersect.
 }
    \label{f:inter}
\end{figure}
In \cite{MTMWBH2} the authors showed that, when condition \eqref{e:D_cont} is satisfied, the jump $j(\phi_\ell):=\eta(\phi_\ell)-\phi_\ell$ attains its maximum. In the following proposition, we characterize where this maximum is achieved in general.

\begin{proposition} \label{p:jump}
Consider \eqref{e:coeffs D}-\eqref{e:coeffs g}, with $D_i>4D_g>0$ and \eqref{e:acb}. Then the jump of the shock wavefront  is maximum 

\begin{itemize}
\item if $\omega \in (0,1/\sqrt{3}]$ and $\varphi_\ell$ is determined by \eqref{e:D_cont};
\item if $\omega \in (1/\sqrt{3},1)$ and $\varphi_\ell=\varphi_\ell^*$ (where the speed is minimal).
\end{itemize}
\end{proposition}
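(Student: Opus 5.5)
The plan is to maximize the jump $j(u):=\eta(u)-u$ directly on $\mathcal I_\omega$, using the explicit formula \eqref{eta_J} from Proposition~\ref{prop:A}, and to show that $j$ is unimodal. Its critical point will turn out to be exactly the point singled out by \eqref{e:D_cont} in Proposition~\ref{p:emm}.

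First, I compute $j'$ on the interior of $\mathcal I_\omega$. Differentiating $P(\eta(u))=P(u)$ gives $\eta'(u)=D(u)/D(\eta(u))$, and $D(\eta(u))>0$ there because $\eta(u)>\beta$. Hence
\[
j'(u)=\frac{D(u)-D(\eta(u))}{D(\eta(u))}.
\]
By \eqref{e:DDD}, as in \eqref{e:akdd},
\[
D(u)-D(\eta)=3(D_i-D_g)(u-\eta)\Bigl(u+\eta-\tfrac43\Bigr).
\]
Since $u<\eta$, this yields $\sgn j'(u)=-\sgn\bigl(u+\eta(u)-\tfrac43\bigr)$.

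Second, I study $h(u):=u+\eta(u)-\tfrac43=\tfrac{u-2+\sqrt{\Delta_u}}{2}$ on $\mathcal I_\omega$.
\begin{itemize}
\item Since $\Delta_u'=4-6u>0$ for $u<\tfrac23$, and $\mathcal I_\omega\subset[0,\alpha]$ with $\alpha<\tfrac23$, the function $h$ is strictly increasing.
\item At the left endpoint $u=\tfrac{2-2\omega}{3}$ we have $\eta=\beta$, so $h=\tfrac{4-\omega}{3}-\tfrac43<0$.
\item By the computation in the proof of Proposition~\ref{p:emm}, the unique zero of $h$ in $[0,\tfrac23)$ is $u_0=\tfrac23-\tfrac{\omega}{\sqrt3}$.
\end{itemize}
Therefore $j$ is strictly increasing on $\mathcal I_\omega\cap[\tfrac{2-2\omega}{3},u_0]$ and strictly decreasing to the right of $u_0$. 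Continuity of $\eta$ up to the endpoints handles the left endpoint, where $D(\eta)=D(\beta)=0$ and $\eta'$ blows up.

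Third, I split into cases according to where $u_0$ lies.
\begin{itemize}
\item If $\omega\in(0,1/\sqrt3]$, Proposition~\ref{p:emm} already shows that $u_0\in\mathcal I_\omega$. (For $\omega\le1/2$ this is just $u_0<\alpha$.) So the maximum of $j$ is attained at $u_0$, which is precisely the point determined by \eqref{e:D_cont}.
\item If $\omega\in(1/\sqrt3,1)$, the same proposition gives $u_0>\max\mathcal I_\omega$. Then $j$ is increasing on all of $\mathcal I_\omega$, and the maximum is at $\max\mathcal I_\omega=\varphi_\ell^*$. By Theorem~\ref{t:main}, this is where $c^*$ is minimal.
\end{itemize}

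The only delicate point is the sign bookkeeping in the factorization of $D(u)-D(\eta)$, together with the monotonicity of $h$. Both are elementary once $\Delta_u'>0$ on $\mathcal I_\omega$ is noted.
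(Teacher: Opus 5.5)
Your proof is correct, but it takes a different route from the paper's. The paper differentiates the closed form $j(\varphi_\ell)=1-\frac32\varphi_\ell+\frac12\sqrt{\Delta_{\varphi_\ell}}$ and solves $j'=0$ to get $\varphi_\ell=\frac23\pm\frac{\omega}{\sqrt3}$. It then reads off the maximum from where these roots fall relative to $\mathcal I_\omega$, using Proposition~\ref{p:emm}. It does not explicitly check the sign of $j'$ on either side of the root, nor that $j'>0$ on all of $\mathcal I_\omega$ when $\omega>1/\sqrt3$.

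You instead start from $\dot\eta=D(u)/D(\eta(u))$ (Proposition~\ref{prop:necess2}), which gives $j'=\bigl(D(u)-D(\eta)\bigr)/D(\eta)$.
\begin{itemize}
\item This makes structural the fact that critical points of the jump are exactly the pairs satisfying \eqref{e:D_cont}. The equivalence holds for any $P$ satisfying (P), as in the observation recalled just before the statement. In the paper it appears only as a coincidence between two separate computations.
\item The explicit model is needed only for the factorization of $D(u)-D(\eta)$ and for the monotonicity of $u+\eta(u)-\frac43$.
\item That monotonicity gives a clean unimodality argument, filling the gap the paper leaves. The gap could also be closed on the paper's side by noting that $(2-3u)/\sqrt{\Delta_u}$ is strictly decreasing for $u<\frac23$.
\end{itemize}

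One slip to fix: $u+\eta(u)-\frac43=\frac12\bigl(u-\frac23+\sqrt{\Delta_u}\bigr)$, not $\frac12\bigl(u-2+\sqrt{\Delta_u}\bigr)$. With your displayed formula, $h$ would be negative everywhere and have no zero. The error is harmless, because you obtain the endpoint value $-\omega/3$ and the zero $u_0=\frac23-\frac{\omega}{\sqrt3}$ directly from $u+\eta-\frac43$ and Proposition~\ref{p:emm}. Monotonicity also holds for the correct expression.
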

\begin{proof}

From \eqref{eta_J}, the jump is given by 
\[
j(\phi_\ell)=1-\frac 32 \phi_{\ell}+\frac{\sqrt{\Delta_{\phi_{\ell}}}}{2}
\]
which implies 

\[
j'(\phi_\ell)=-\frac 32 +\frac{2-3\phi_{\ell}}{2 \sqrt{\Delta_{\phi_{\ell}}}}
\] 
and hence $j'(\phi_\ell)=0$ when $\phi_{\ell}=\frac 23 \pm \frac{\omega}{\sqrt 3}$. Note that  $\frac 23 + \frac{\omega}{\sqrt 3} \not \in \mathcal I_{\omega}$, whereas $\frac 23 - \frac{\omega}{\sqrt 3}\in \mathcal I_{\omega}$ only if $\omega \in (0, \frac{1}{\sqrt 3}]$ and, in this case, it is a maximum point for $j(\phi_\ell)$. In conclusion,  when $\omega \in ( 0, 1/\sqrt 3]$, from Proposition \ref{p:emm} we obtain that  the maximum of $j(\phi_{\ell})$ is achieved when condition \eqref{e:D_cont} is satisfied. For the remaining values of $\omega$ the maximum of $j(\phi_{\ell})$ is attained at the right endpoint of $\mathcal I_{\omega}$, i.e. when the corresponding speed is minimal (see Proposition \ref{prop:A}).
\end{proof}	

\begin{remark}
{
\rm According to Propositions \ref{prop:A}, \ref{p:emm}, and \ref{p:jump}, if $\omega \in (0, 1/\sqrt 3)$, the unique value $\phi_{\ell}$ for which condition \eqref{e:D_cont} is satisfied also maximizes the jump, although it does not minimize the corresponding speed. In contrast, when $\omega= 1/\sqrt 3$, the value $\phi_{\ell}=1/3$ simultaneously maximizes the jump, satisfies condition \eqref{e:D_cont}, and minimizes the corresponding speed (see Figure \ref{f:three}).
}
\end{remark}
\begin{figure}[htbp]
    \centering
    \begin{tabular}{cc}
    \includegraphics[width=7.2cm]{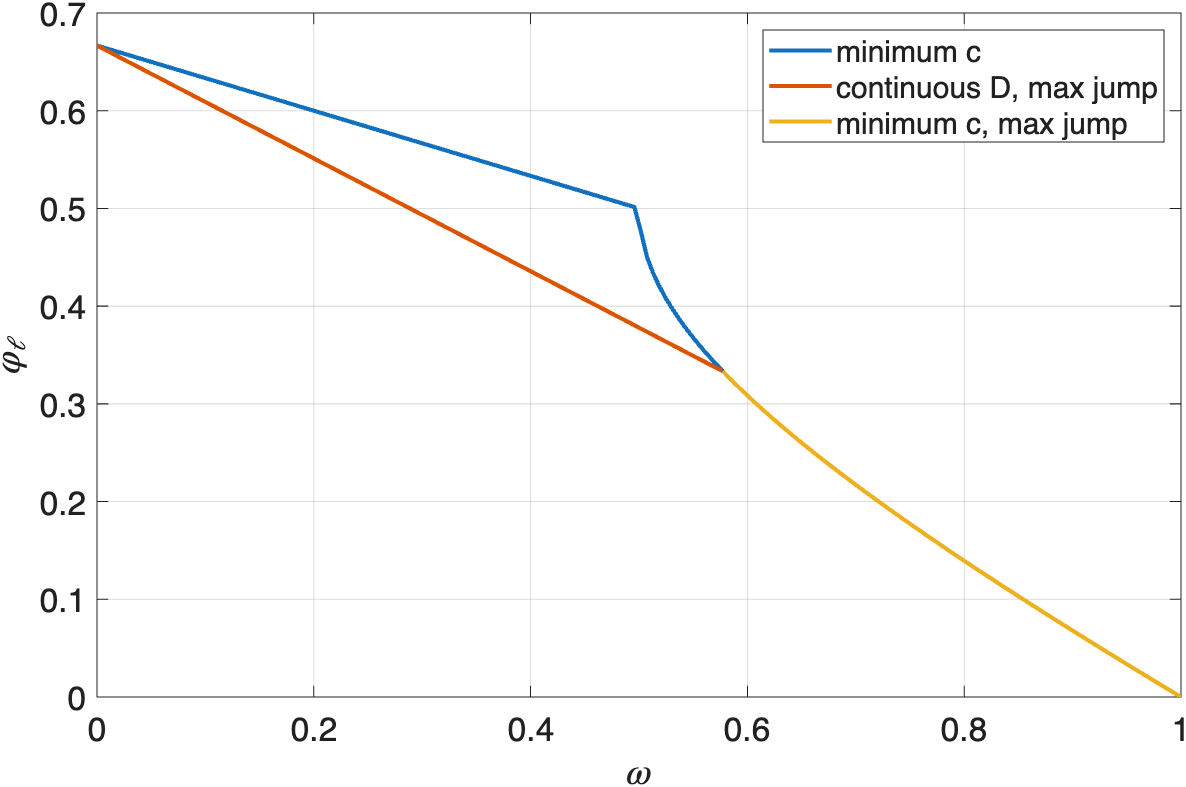}
        \end{tabular}
 \caption{The values of  $\phi_{\ell}$ for which condition \eqref{e:D_cont} is satisfied are shown in orange. The values of $\phi_{\ell}$ that minimize the corresponding speed are shown in blue and yellow.}
    \label{f:three}
    \end{figure}
Finally, we provide an estimate of the closed interval where the speeds vary. The result makes use of \eqref{e:c1c2} (see Proposition \ref{prop:c*_bounds}), derived in the general case.

\begin{proposition} \label{p:interval-c}
Consider \eqref{e:coeffs D}-\eqref{e:coeffs g}, with $D_i>4D_g>0$ and \eqref{e:acb}. Then,  $c^*(\phi_{\ell})$ satisfies the inequality
\begin{equation}\label{e:intervallo-c}
-2\sqrt{D_i(k_i-\lambda_i)} < c^*< 2\sqrt{\lambda_gD_g}.
\end{equation}
\end{proposition}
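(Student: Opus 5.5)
We plan to derive \eqref{e:intervallo-c} from the general bounds \eqref{e:c1c2} of Proposition~\ref{prop:c*_bounds}, which we expect to be KPP-type estimates built from the two semi-wavefront stretches of the profile. The stretch in $[\varphi_r,1]$, with $D>0$ on $(\beta,1]$, connects to the stable state $1$. The stretch in $[0,\varphi_\ell]$, with $D>0$ on $[0,\alpha)$, connects to $0$. Such estimates have the form
\[
-2\sqrt{\sup_{u\in(\beta,1)}\frac{D(u)\,(-g(u))\text{-type quotient}}{\cdot}}\;<\;c^*\;<\;2\sqrt{\sup \frac{D(u)g(u)}{\text{distance to the equilibrium}}},
\]
that is, bounds of the form $c^*<2\sqrt{\sup_{u\in(\beta,1)} D(u)g(u)/(1-u)}$ and $c^*>-2\sqrt{\sup_{u\in(0,\alpha)} D(u)|g(u)|/u}$.

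Accordingly, the first step is to record the precise form of \eqref{e:c1c2}. The second step is to specialize $D$ and $g$ using \eqref{e:coeffs D}, \eqref{e:coeffs g} and $k_g=0$ from \eqref{e:acb}. This gives $g(u)=u(1-u)\bigl[\lambda_g+(\lambda_i-\lambda_g-k_i)(1-u)\bigr]$, so that
\[
\frac{g(u)}{u}=(1-u)\bigl[\lambda_g u-(k_i-\lambda_i)(1-u)\bigr],\qquad \frac{g(u)}{1-u}=u\bigl[\lambda_g u-(k_i-\lambda_i)(1-u)\bigr].
\]

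For the upper bound we use $g(u)/(1-u)\le \lambda_g u^2\le\lambda_g$ on $(\beta,1)$ and $D(u)=D_g+(D_i-D_g)(1-u)(1-3u)$. Since $u>\beta>2/3>1/3$, we have $(1-u)(1-3u)<0$, hence $D(u)<D_g$. The product is therefore strictly less than $\lambda_g D_g$, and strictness survives taking the supremum because the factor $u^2<1$ and $D<D_g$ hold uniformly away from $u=1$. At $u\to1$ we need care: $D(1)=D_g$ and $g/(1-u)\to \lambda_g-0=\lambda_g$. So the supremum may equal $\lambda_g D_g$, and we rely on the strict inequality in \eqref{e:c1c2} itself.

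For the lower bound we work on $(0,\alpha)$. There $|g(u)|/u=(1-u)\bigl[(k_i-\lambda_i)(1-u)-\lambda_g u\bigr]\le (k_i-\lambda_i)$, using $k_i>\lambda_i$, which is implied by \eqref{e:acb}. We also have $D(u)=D_i(1-u)(1-3u)+D_g u(4-3u)\le D_i$: indeed $D_i-D(u)=u(4-3u)(D_i-D_g)\ge 0$. This gives the bound $-2\sqrt{D_i(k_i-\lambda_i)}$, again with equality approached only as $u\to0$.

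The main obstacle we expect is matching the exact statement of \eqref{e:c1c2}, since its quotient may be normalized differently, for instance with $\int D g$ or $\sup D(u)g(u)/u$ taken over the full ranges. The applicability hypotheses also need to be checked. Remark~\ref{r:P1alpha} indicates that Proposition~\ref{prop:c*_bounds} requires $P(1)\ge P(\alpha)$, i.e.\ $\omega\le1/2$. For $\omega>1/2$ we would instead invoke the semi-wavefront speed estimates of Section~\ref{s:RSWFs} directly on each stretch, which yield the same two elementary inequalities. The strictness of the inequalities would come from the strict monotonicity/strict comparison in those estimates rather than from the suprema.
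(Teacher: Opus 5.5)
You have the paper's strategy (specialize \eqref{e:c1c2} to the model and estimate the suprema), but you carry out the decisive step on the wrong quantities, so the proof does not go through as written.

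\textbf{The gap.} You guessed that \eqref{e:c1c2} involves $D(u)g(u)/(1-u)$ on $(\beta,1)$ and $D(u)|g(u)|/u$ on $(0,\alpha)$. These are quotients normalized at the equilibria $1$ and $0$. In fact \eqref{e:c1c2} is normalized at the zeros $\beta$ and $\alpha$ of $D$:
\begin{itemize}
\item the upper bound is $2\sqrt{\sup_{(\beta,1]}\frac{1}{\phi-\beta}\int_\beta^\phi\frac{g(\sigma)D(\sigma)}{\sigma-\beta}\,d\sigma}$;
\item the lower bound averages $\frac{-g(\sigma)D(\sigma)}{\alpha-\sigma}$ over $(\phi,\alpha)$.
\end{itemize}
This is not a harmless renormalization. \eqref{e:c1c2} is really an estimate of the thresholds $c^*_{1\beta}$ and $c^*_{\alpha0}$, and $c^*_{1\beta}\ge 2\sqrt{g(\beta)D'(\beta)}$ by \eqref{e:est_c2}. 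A quotient normalized at $1$ does not control this. For instance, take $D(u)=u-\beta$ and $g(u)=K(1-u)^2$ on $[\beta,1]$. Then $2\sqrt{g(\beta)D'(\beta)}=2\sqrt K(1-\beta)$, whereas $2\sqrt{\sup Dg/(1-u)}=\sqrt K(1-\beta)$. So your two elementary inequalities, though correct, do not yield \eqref{e:intervallo-c}. Likewise, the facts $D<D_g$ on $(\beta,1)$ and $D\le D_i$ on $(0,\alpha)$ are not what is needed.

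\textbf{The fix (the paper's proof).} An average is at most the supremum of its integrand, so it suffices to bound the integrands after cancelling the linear factor of $D$. Write $D(u)=3(D_i-D_g)(u-\alpha)(u-\beta)$ and $g(u)=(k_i-\lambda_i+\lambda_g)u(1-u)(u-\gamma)$.
\begin{itemize}
\item On $(\beta,1]$ one has $\frac{gD}{u-\beta}=3(D_i-D_g)(k_i-\lambda_i+\lambda_g)\,u(1-u)(u-\alpha)(u-\gamma)$. Use $u(1-u)\le\beta(1-\beta)$, $u-\alpha\le1-\alpha$ and $u-\gamma\le 1-\gamma$. Then use the identities $3(D_i-D_g)(1-\alpha)(1-\beta)=D(1)=D_g$ and $(k_i-\lambda_i+\lambda_g)(1-\gamma)=\lambda_g$. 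This gives $\frac{gD}{u-\beta}\le\beta\lambda_gD_g<\lambda_gD_g$.
\item On $[0,\alpha)$ one has $\frac{-gD}{\alpha-u}=3(D_i-D_g)(k_i-\lambda_i+\lambda_g)\,u(1-u)(\gamma-u)(\beta-u)$. Using $u(1-u)\le\frac14$, $\gamma-u\le\gamma$ and $\beta-u<1$, this is at most $\frac34(D_i-D_g)(k_i-\lambda_i)<D_i(k_i-\lambda_i)$.
\end{itemize}

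\textbf{The case $\omega>1/2$.} You are right that Proposition~\ref{prop:c*_bounds} assumes $P(\alpha)\le P(1)$. By Remark~\ref{r:P1alpha} this means $\omega\le 1/2$, and the paper's proof invokes \eqref{e:c1c2} without addressing $\omega>1/2$. Your proposed remedy, however, is not an argument. Section~\ref{s:RSWFs} contains no quantitative speed bounds. Moreover, $c^*(\varphi_\ell)$ is not the threshold speed of either stretch; it is the zero of $\mathcal F(\cdot,\varphi_\ell)$ in \eqref{e:mathcalF}.

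What works is a direct comparison, valid for every $\varphi_\ell\in\mathcal I$ with no assumption on the endpoints of $\mathcal I$.
\begin{itemize}
\item If $c\le -c^*_{\alpha0}$, then $z_c(\alpha)=0$ and $\dot z_c=-c-Dg/z_c<-c$ on $(0,\alpha)$. Hence $-z_c(\varphi_\ell)\le -c(\alpha-\varphi_\ell)$. Since also $z_c(\eta(\varphi_\ell))\le0$, we get $\mathcal F(c,\varphi_\ell)\le c\left(\eta(\varphi_\ell)-\alpha\right)<0$.
\item Symmetrically, if $c\ge c^*_{1\beta}$, then $z_c(\beta)=0$ and $\dot z_c>-c$ on $(\beta,1)$. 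These give $\mathcal F(c,\varphi_\ell)\ge c(\beta-\varphi_\ell)>0$.
\end{itemize}
Since $\mathcal F$ is strictly increasing in $c$, it follows that $-c^*_{\alpha0}<c^*(\varphi_\ell)<c^*_{1\beta}$ for all $\omega\in(0,1)$. Combining this with \eqref{e:est_c2}, \eqref{e:est_c1} and the computation above yields \eqref{e:intervallo-c}.
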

\begin{proof} From their definitions, it is straightforward to see that
\[
D(u)=3(D_i-D_g)(u-\alpha)(u-\beta), \quad g(u)=(k_i-\lambda_i+\lambda_g)u(1-u)(u-\gamma), \quad u\in [0,1]
\]
with $\gamma:=\frac{k_i-\lambda_i}{k_i-\lambda_i+\lambda_g}$, while  $\alpha$ and $\beta$ are defined in \eqref{e:alphabeta}. 
We use the estimate \eqref{e:c1c2}, concerning  the general case, and first derive an upper bound. Since 
\[
\frac{g(u)D(u)}{u-\beta}=3(D_i-D_g)(k_i-\lambda_i+\lambda_g)u(u-\alpha)(u-\gamma)(1-u),
\]
and since $u(1-u)$ is decreasing in $(\beta,1]$, by \eqref{e:alphabeta} it is easy to see that
\begin{align*}
\sup_{u\in (\beta, 1]} \frac{g(u)D(u)}{u-\beta}&\le 3(D_i-D_g)(k_i-\lambda_i+\lambda_g)\beta(1-\alpha)(1-\gamma)(1-\beta)
\\ 
&=3(D_i-D_g)(k_i-\lambda_i+\lambda_g)\beta\left(1-(\alpha+\beta)+\alpha\beta\right)(1-\gamma)
\\
&=(D_i-D_g)(k_i-\lambda_i+\lambda_g)\beta\frac{1-\omega^2}{3}(1-\gamma)= \beta\lambda_gD_g<\lambda_gD_g.
\end{align*}
Hence, $c^*<2\sqrt{\lambda_gD_g}$.
Similarly, since $u(1-u)<\frac14$ in $[0,1]$, then
\begin{align*}
\sup_{u\in [0, \alpha)} \frac{-g(u)D(u)}{\alpha-u}&\le \frac34(D_i-D_g)(k_i-\lambda_i+\lambda_g) \gamma =\frac34(D_i-D_g)(k_i-\lambda_i)< D_i(k_i-\lambda_i).
\end{align*}
\end{proof}

\section{Generalities on shock traveling waves }\label{e:MR}
\setcounter{equation}{0}
In this section we {\em only assume condition {\rm (R)}}. We first define regular and shock traveling-wave solutions; we also show some preliminary properties of their profiles. The definitions below, indeed, can be given under weaker assumptions; we refer to \cite{GK} for the regular case.  
We denote by $I\subseteq\R$ any open interval. By {\em jump point} of a function $f=f(\xi)$ defined in $I$ we mean a point $\xi_0\in I$ such that both side limits $f(\xi_0^\pm)$ {\em exist}, are {\em finite} and {\em different}; we denote $[f(\xi_0)]:=f(\xi_0^+)-f(\xi_0^-)$. 

\begin{definition}\label{d:regular-shock} 
Let $\varphi$ be a function defined in $I$ and valued in $[0,1]$, which is continuous in $I$ possibly apart from a finite number of jump points; let $c$ be a real constant. The function $u(x,t):=\varphi(x-ct)$, for $(x,t)$ with $\xi=x-ct \in I$, is a {\em traveling-wave} solution to equation \eqref{e:E} with speed $c$ and profile $\phi$ if it is a weak solution of equation \eqref{e:EP}, i.e., if
\begin{equation}\label{e:solution}
\int_I P(\phi)\psi''\, d\xi = c\int_I \phi\psi'\, d\xi -\int_I g(\phi)\psi\, d\xi, \qquad \psi\in C_c^\infty(I).
\end{equation}
In particular,  $u$ is a {\em regular traveling-wave} solution if $\phi$ is continuous, while it is a {\em shock traveling-wave}  solution  otherwise.
\end{definition}

When $\phi$ has a jump at some $\xi_s\in\R$ ($s$ for shock), we assume $\phi(\xi_s):=\phi(\xi_s^-)$ for definiteness.

\begin{remark}\label{r:aboutDef}
{\rm 
Let $\phi$ be a profile as in Definition \ref{d:regular-shock}.
By (R), the function   $P(\phi)$ is continuous in $I$ with at most a finite number of jump discontinuities; then $P(\phi) \in  L_{\rm loc}^1(I)$ and formula \eqref{e:solution} is well defined.
If $\phi$ is continuous, then Definition \ref{d:regular-shock}  is equivalent to \cite[Def. 2.2]{GK} (see the remarks following \cite[Def. 2.2]{GK}). 
}
\end{remark}

A (regular or shock) traveling wave (briefly, a TW) is {\em global} if $I=\R$; it is {\em classical} if $\varphi$ is differentiable, $P(\varphi)'$ is absolutely continuous and \eqref{e:EP} holds a.e.. A (regular or shock) {\em wavefront} is a global TW, whose profile $\phi$ is monotone, non-constant, and $\phi(\pm\infty)$ are zeroes of $g$. By monotone we mean that $\phi(\xi_1)\le\phi(\xi_2)$  if $\xi_1<\xi_2$. A (regular or shock) {\em semi-wavefront to} $1$ ({\em from} $1$) is a TW with $I=(a,\infty)$ ($I=(-\infty,a)$) for some $a\in\R$, whose profile $\phi$ is monotone, non-constant and $\phi(\xi)\to1$ as $\xi\to\infty$ ($\xi\to-\infty$). Semi-wavefronts to or from $0$ or $\gamma$ are defined analogously. As usual, we often address to profiles as traveling waves, with a slight abuse of terminology.

\smallskip
In the following two remarks, in \eqref{Ipm_Ijumps} and in \eqref{Ipm_0} we assume that  $\varphi$ is the profile of a traveling wave in $I$.

\begin{remark}\label{r:I0}
{
\rm
If $\varphi\equiv \varphi_0$ in a non-trivial open interval of $I$, then necessarily $g(\varphi_0)=0$. Otherwise, $\varphi$ cannot solve \eqref{e:EP}. Also, if $\phi$ is monotone, then for every $\sigma\in \varphi(I)$ with $g(\sigma)\ne 0$, there is at most a point $\xi_{\sigma}\in I$ such that $\phi(\xi_{\sigma})=\sigma$.
}
\end{remark}
As subsets of $I$, we define

\begin{equation}
\label{Ipm_Ijumps}
I_s := \{\text{jump points of $\varphi$ in $I$}\}, 
\qquad 
I_* := I \setminus I_s .
\end{equation}
Clearly, $I_*$ is composed by a finite number of open intervals and $\varphi$ is continuous in $I_*$. For our results, it is convenient to partition $I_*$ into the following subsets:
\begin{equation}
\label{Ipm_0}
I_{\pm} := \left\{ \xi \in I_* \,:\, \pm D\bigl(\varphi(\xi)\bigr) > 0 \right\},
\qquad
I_0 := \left\{ \xi \in I_* \,:\, D\bigl(\varphi(\xi)\bigr) = 0 \right\}.
\end{equation}

Possibly, either $I_\pm$ or $I_0$ can be empty.

\begin{remark}\label{r:I0_2}
{
\rm
Suppose ${\rm int}(I_0) \neq \varnothing$. Because $D$ has only isolated zeros, $\varphi$ must be constant in each connected component of ${\rm int}(I_0)$. For what observed in the first part of Remark \ref{r:I0}, such constant values must be common zeros of $D$ and $g$ (provided such zeros exist). We refer to \cite{BCM3} for the case when such a common zero ($\gamma$ in \cite{BCM3}) belongs to $(0,1)$.

Note,  $I_0$ is not necessarily closed, because its boundary $\partial I_0$ may intersects $I_{s}$. Moreover, in the case of a monotone $\varphi$, if ${\rm int}(I_0) \neq \varnothing$, then $\partial I_0$ consists of finitely many points.
}
\end{remark}

\smallskip
 We denote by $P(\phi)'$ the distributional derivative of the function $P(\phi)(\xi):=P\left(\phi(\xi)\right)$. In Proposition~\ref{p:Pconditions} we state some consequences of Definition~\ref{d:regular-shock}  on the regularity of $\varphi$, $P(\varphi)$ and $P(\varphi)'$. In particular, we show that $P(\phi)'$ is, in fact, a function which is continuous in $I_*$ and hence coincides there with the pointwise derivative of $P(\phi)$ in $I_*$. 

\begin{proposition}\label{p:Pconditions}
Assume {\rm (R)} and let $\phi$ be a traveling-wave solution of equation \eqref{e:E} defined on $I$ with speed $c$. Then, 
\begin{enumerate}[label=(\roman*)]
\item $P(\phi)$ is an absolutely continuous function in $I$ and $P(\phi) \in C^1(I_*)$;
\item $P(\phi)'+c\phi$ can be extended to $I_{s}$ by continuity;  
\item $\phi$ is of class $C^2$ and a classical solution of \eqref{e:EP} in ${\rm int}(I_*)$.
\end{enumerate}
\end{proposition}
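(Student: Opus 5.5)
The plan is to read the weak formulation \eqref{e:solution} as a statement about distributions. Set $w:=P(\phi)$ and $F(\xi):=\int_{\xi_0}^{\xi} g(\phi(s))\,ds$ for a fixed $\xi_0\in I$. Since $\phi$ is bounded and has finitely many jumps, $g(\phi)\in L^\infty_{\rm loc}(I)$, so $F$ is locally Lipschitz. Relation \eqref{e:solution} says
\[
w''=-c\phi'-g(\phi)\quad\text{in }\mathcal D'(I).
\]
Equivalently, $(w'+c\phi+F)'=0$ in $\mathcal D'(I)$. By the classical lemma that a distribution with zero derivative on an interval is constant, there is a constant $K$ with
\[
w'=K-c\phi-F
\]
in the sense of distributions.

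For (i) and (ii): the right-hand side $K-c\phi-F$ is a bounded function, continuous except at the finitely many jump points of $\phi$. Hence $w'$ is an $L^\infty_{\rm loc}$ function, and $w$ coincides a.e. with $\xi\mapsto w(\xi_0)+\int_{\xi_0}^\xi (K-c\phi-F)\,ds$, which is locally Lipschitz and in particular absolutely continuous. We must check that $w$ itself, and not merely a representative, has this property. Here we use that $w=P(\phi)$ is continuous on $I_*$ with at most jump discontinuities on $I_s$, since $\phi$ has one-sided limits there. A function with a jump cannot agree a.e. with a continuous function across that point, so the jumps of $P(\phi)$ are fictitious: $P(\phi(\xi_s^-))=P(\phi(\xi_s^+))$ at every $\xi_s\in I_s$. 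This is exactly the constraint that reappears as \eqref{e:condition_P}. So $w$ is absolutely continuous on $I$.

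On $I_*$ the integrand $K-c\phi-F$ is continuous, so the fundamental theorem of calculus gives $w\in C^1(I_*)$ with pointwise derivative $w'$. Moreover $w'+c\phi=K-F$ on $I_*$, and $K-F$ is continuous on all of $I$. This gives the continuous extension in (ii), i.e. the Rankine--Hugoniot-type relation \eqref{e:speed}.

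For (iii), take a connected component $J$ of $\mathrm{int}(I_*)$. There $\phi$ is continuous and $w=P(\phi)\in C^1(J)$ with $w'=K-c\phi-F$. The main obstacle is recovering regularity of $\phi$ from regularity of $P(\phi)$, since $P$ is invertible only where $D=P'\neq0$. On $J\cap I_\pm$, $P$ is locally a $C^2$ diffeomorphism, so $\phi=P^{-1}(w)$ is $C^1$ with $\phi'=w'/D(\phi)$. Then $w'=K-c\phi-F$ is $C^1$, so $w\in C^2$ and $\phi\in C^2$, and \eqref{e:EP} holds classically there.

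Near points of $I_0$ we cannot invert $P$. Since $D$ has finitely many zeros and $\phi$ is continuous on $J$, either $\phi$ is locally constant at a common zero of $D$ and $g$ (Remark~\ref{r:I0_2}), which is trivially $C^2$, or the zero set is discrete in $J$. In the discrete case we handle an isolated point $\xi_0$ separately, and we read the $C^2$ claim there as holding away from such points, consistent with the introduction's caveat ``except, possibly, at points where $D$ vanishes''. I expect this degenerate-point analysis to be the delicate part. The classical equation \eqref{e:EP} still holds on all of $J$, because $w'$ is $C^0$ and $w''=-c\phi'-g(\phi)$ holds wherever $\phi$ is differentiable.
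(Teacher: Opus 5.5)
Your proposal is correct and follows the paper's proof essentially step by step: you integrate the distributional identity once to get $P(\phi)'=K-c\phi-F$ a.e., identify $P(\phi)$ with its absolutely continuous representative because it has at most jump discontinuities, read off (ii) from the continuity of $K-F$, and for (iii) invert $P$ on components of $I_\pm$ and bootstrap from $C^1$ to $C^2$, with $\phi$ constant on ${\rm int}(I_0)$. Your caveat about points of $I_0$ outside ${\rm int}(I_0)$ is well taken, since the paper's argument also covers only $I_\pm\cup{\rm int}(I_0)$ (and Lemma~\ref{l:converse} only uses (iii) away from such points); note, however, that your dichotomy ``locally constant or discrete'' needs $\phi$ to be monotone (cf.\ Remark~\ref{r:I0_2}), which the proposition does not assume.
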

\begin{proof} We prove each item in order.

\smallskip

{\em (i)} Let $G\in C(I)$ be an integral function of $g(\phi)$; we also have $G \in C^1(I_*)\cap L^1_{\rm loc}(I)$ because $g(\phi)$ is continuous in  $I_*$ and bounded in $I$. 
Since  $I_{s}$ consists of finitely many  points at most, then the distributional derivative of $P(\phi)'+c\phi-G$ is $0$ a.e.\! in $I$ (see  Definition~\ref{d:regular-shock}). Consequently $P(\phi)'+c\phi$ and $G$ differ a.e.\!  by a real constant \cite{Schwartz}, i.e.,
\begin{equation}\label{e:item1}
P(\phi)'=G-c\phi +k\quad \text{a.e. in }  I,
\end{equation}
with $k \in \mathbb{R}$. 
Therefore the distribution  $P(\phi)'$ is a function and, according to the properties of $\phi$ (see Definition \ref{d:regular-shock}), we have $P(\phi)'\in C(I_*)\cap L^1_{\rm loc}(I)$. Since $P(\phi)' \in  L^1_{\rm loc}(I)$, then $P(\phi)$ has an absolutely continuous representative (see e.g. \cite[Thm. 8.2]{Brezis}). As a consequence, $P(\phi)$ is absolutely continuous in $I$ because it has jump discontinuities at most (in $I_{s}$). Since, moreover,  $P(\phi)' \in C(I_*)$ we deduce $P(\phi) \in C^1(I_*)$.

\smallskip

{\em (ii)} Since  $P(\phi)'+c\phi=G \in L^1_{\rm loc}(I)$ by \eqref{e:item1},  the function   $P(\phi)'+c\phi$ has an absolutely  continuous representative in $I$ \cite[Thm. 8.2]{Brezis}. Since $P(\phi)'+c\phi$ continuous in $I_*$, see {\em (i)}, this implies that it is extendable to $I_{s}$ by continuity.

\smallskip

{\em (iii)} The function $\phi$ is constant in every connected component of ${\rm int}(I_0)$ (see Remark~\ref{r:I0_2}) and then $\phi \in C^2\left({\rm int}(I_0)\right)$. The set  $I_\pm$ is open since it does not include jump points of $\phi$; let $J $ be one of its connected components, i.e., an open interval contained in $I_{\pm}$ where $P'\left(\phi(\xi)\right)$ has constant non-zero sign. This implies, in particular, that the inverse of $P|_{\varphi(J)}$ is well-defined and $C^2$. By denoting $\Phi$ such an inverse, we have
\begin{equation}\label{e:P(phi)}
\phi(\xi)=\Phi\left(P(\phi(\xi))\right), \quad \xi \in J.
\end{equation}
This relation, and {\em (i)}, imply $\phi \in C^1(J)$. By \eqref{e:item1}, we infer that $P(\phi)' \in C^1(J)$, that is $P(\phi) \in C^2(J)$. By \eqref{e:P(phi)} again, we deduce $\phi \in C^2(J)$, that concludes the proof of {\em (iii)}.
\end{proof}

\begin{remark}\label{r:sidelimits}
{
\rm 
We comment on Proposition \ref{p:Pconditions}.

\smallskip

(i) The proof of Proposition~\ref{p:Pconditions}{\em (i)} only exploits the continuity of $g$ in $[0,1]$ but does not depend on the number  of its zeroes. Assume $P'>0$ a.e.\!\! in $[0,1]$, i.e., the diffusivity is positive but possibly degenerate. In this case, the continuity of $P(\phi)$ in Proposition~\ref{p:Pconditions}{\em (i)} rules out the possibility of shock traveling waves  for every merely continuous reaction term $g$. This applies in particular if $g$ is monostable, i.e., if $g$ continuous in $[0,1]$ with $g>0$ in $(0,1)$ and $g(0)=g(1)=0$. In this case the uniqueness of solutions which holds in the classical continuous framework \cite{GK} still holds in the wider class of solutions of Definition~\ref{d:regular-shock}.

\smallskip

(ii) Items {\em (i)} and {\em (ii)} of Proposition \ref{p:Pconditions} can be stated as 
\begin{equation}\label{e:llsk}
\left[P(\phi)\right]=0 \quad \text{and} \quad  \left[P(\phi)'+c\phi\right]=0 \ \mbox{ in } \ I.
\end{equation}
In fact, \eqref{e:llsk}$_1$ is equivalent to the continuity of $P(\phi)$ while \eqref{e:llsk}$_2$ holds because the function $P(\phi)'+c\phi$ is continuous in $I_*$ and extendable to $I_{s}$ by continuity.    

\smallskip

(iii) From Proposition~\ref{p:Pconditions}{\em (iii)} we have $P(\phi)\in C^1(I_*) $ and $P(\phi)$ has angular points in $I_{s}$. If $c=0$, then $P(\phi)'$ is well defined and continuous on $I$.    
}\end{remark}

In the following {\em we denote by $\xi_s$ a generic jump point of $\phi$ and by $0\le \phi_\ell<\phi_r \le 1$ the side limits of $\phi$ at $\xi_s$.} Conditions \eqref{e:llsk} are equivalent, at every jump point $\xi_s$ of $\phi$, to 
\begin{equation}\label{e:hypo}
P(\phi_r)-P(\phi_{\ell})=\int_{\phi_\ell}^{\phi_r}D(s)\,d s=0
\quad \mbox{ and } \quad
c=-\frac{\displaystyle P(\varphi)'(\xi_s^-)-P(\phi)'(\xi_s^+)}{\phi_r-\phi_\ell}.
\end{equation}

We now provide a regularity result about monotone profiles of both regular and shock traveling-waves.

\begin{lemma}\label{l:Dphine0} Assume {\rm (R)} and let $\phi$ be a traveling-wave solution of equation \eqref{e:E} defined on $I$ with speed $c$. If $\phi$ is monotone, then  $\phi'(\xi) \ne 0$ for every $\xi \in I_{\pm}$ such that $g(\phi(\xi))\neq 0$.
\end{lemma}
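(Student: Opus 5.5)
We argue by contradiction. Fix $\xi_0\in I_\pm$ with $g(\phi(\xi_0))\neq0$, and suppose that $\phi'(\xi_0)=0$. By Proposition~\ref{p:Pconditions}(iii), $\phi$ is of class $C^2$ and solves \eqref{e:EP} classically on the connected component $J$ of $I_\pm$ that contains $\xi_0$. On $J$ we have $D(\phi)\neq0$, so there we may use the expanded form \eqref{e:ED}:
\[
D(\phi)\phi''+D'(\phi)(\phi')^2+c\phi'+g(\phi)=0 .
\]
This form is legitimate because $P\in C^2$, hence $D\in C^1$. Evaluating at $\xi_0$, where $\phi'(\xi_0)=0$, gives
\[
\phi''(\xi_0)=-\frac{g(\phi(\xi_0))}{D(\phi(\xi_0))}\neq0 .
\]

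Consequently $\xi_0$ is a strict local extremum of $\phi$: a strict local minimum if $\phi''(\xi_0)>0$ and a strict local maximum if $\phi''(\xi_0)<0$. In either case $\phi'$ changes sign at $\xi_0$. Indeed, $\phi'$ is $C^1$ near $\xi_0$, vanishes at $\xi_0$, and has nonzero derivative there. Concretely, there is $\delta>0$ such that $\phi'(\xi)$ has one strict sign on $(\xi_0-\delta,\xi_0)$ and the opposite strict sign on $(\xi_0,\xi_0+\delta)$.

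This contradicts monotonicity. In the paper's convention $\phi$ is nondecreasing, so $\phi'\ge0$ wherever it exists. The same argument applies to nonincreasing profiles, for instance those satisfying \eqref{e:phi-limits}: there $\phi'\le0$, and a strict sign change of $\phi'$ is again impossible. Hence $\phi'(\xi_0)\neq0$.

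The only delicate point is justifying the pointwise expanded equation at $\xi_0$. This follows from $\phi\in C^2(J)$, which was shown in the proof of Proposition~\ref{p:Pconditions}(iii) via the local inverse $\Phi$ of $P$, together with $D=P'\in C^1$. An alternative route avoids $D'$ altogether. Set $w:=P(\phi)'=D(\phi)\phi'$. Then $w(\xi_0)=0$ and, from \eqref{e:EP}, $w'(\xi_0)=-g(\phi(\xi_0))\neq0$. So $w$ changes sign at $\xi_0$. Since $D(\phi)$ has constant sign on $J$, the derivative $\phi'=w/D(\phi)$ changes sign at $\xi_0$ as well, which again contradicts monotonicity.
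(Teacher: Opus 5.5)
Your proof is correct and follows the paper's argument: assuming $\phi'(\xi_0)=0$ on an interval where $D(\phi)$ has a fixed sign, the equation forces a nonzero second-order quantity at $\xi_0$, which produces a strict extremum of $\phi$ and contradicts monotonicity. Your ``alternative route'' via $w=P(\phi)'$ with $w'(\xi_0)=-g(\phi(\xi_0))\neq0$ is exactly the paper's proof. Your main route, via the expanded form $\phi''(\xi_0)=-g(\phi(\xi_0))/D(\phi(\xi_0))$, is an equivalent variant that additionally uses $D\in C^1$, which holds here.
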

\begin{proof} Assume, by contradiction, the existence of $\xi_0 \in I_{\pm}$ with $g\left(\phi(\xi_0)\right)\ne \, 0$ and $\phi'(\xi_0)=0$. Since  $ I_{\pm}$ does not contain jump points of $\phi$, there is an open interval $J\subseteq I_{\pm}$ with $\xi_0 \in J $  where $D(\phi)$ is always different from zero. Hence $\phi\in C^2(J)$ by Proposition~\ref{p:Pconditions}{\em (iii)}, then also $P(\phi)\in C^2(J)$ and $\phi$ is a classical solution of \eqref{e:EP}; moreover, $P(\phi)'(\xi_0)=D\left(\phi(\xi_0)\right)\phi'(\xi_0)=0$ and $P(\phi)''(\xi_0)=-g\left(\phi(\xi_0)\right)\ne 0$ by \eqref{e:EP}. Hence the function $P(\phi)'=D(\phi)\phi'$ changes sign in a neighborhood of $\xi_0$; since $D$ has a fixed sign there, this implies that $\xi_0$ is a strict extremum point for $\phi$ in contradiction with the monotonicity of $\phi$. The proof is complete.
\end{proof}

It follows from Remarks \ref{r:I0} and \ref{r:I0_2} that for a {\em regular} traveling wave  $\phi$ it may occurr $\phi'=0$ a.e.\! in $I$. In this case $\phi$ is constant, i.e., $\phi \equiv \phi_0$, with $g(\phi_0)=0$.   The following result characterizes {\em shock}  profiles $\phi$ with $\phi'=0$ a.e.

\begin{proposition}\label{p:tratti}  Assume {\rm (R)}. Then $\phi $ is a shock traveling-wave solution in $I$ of \eqref{e:E} with $\phi'(\xi) = 0$ a.e.\! in $I$ if and only if there exist $-\infty\le a_0<a_1<\ldots<a_m\le \infty$, with $m>1$ and $I=(a_0, a_m)$, $\gamma_k\in [0,1]$ with $\gamma_k\ne \gamma_{k+1}$ for $k=1,\ldots,m-1$, such that 
\begin{equation}\label{e:ggPP}
g(\gamma_1)=\cdots =g(\gamma_m)=0
\quad \mbox{ and } \quad 
P(\gamma_1)=\cdots=P(\gamma_m).
\end{equation}
In this case, $\phi$ is a step function of the form
\begin{equation}\label{e:step}
\phi(\xi)
= \sum_{k=1}^{m} \gamma_k \, \chi_{k}(\xi), \quad  \xi\in I, 
\end{equation}
where \begin{equation*} \chi_k(\xi)=\begin{cases} 1 \ &\mbox{ if } \ \xi \in (a_{k-1},a_k]\cap I,
\\ 
0 \ &\mbox{otherwise}
\end{cases}
\end{equation*}
and  $c=0$.
\end{proposition}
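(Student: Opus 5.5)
We prove the two implications separately; in both directions the work is to read off the conditions \eqref{e:llsk} at each jump.

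\emph{Necessity.} Assume $\phi$ is a shock traveling wave in $I$ with $\phi'=0$ a.e. in $I$. On each connected component $J$ of $I_*$ the profile is continuous with zero derivative a.e. We would like to conclude that it is constant there, but continuity plus a.e. vanishing derivative does not give this by itself (Cantor-type functions). So we go through $P(\phi)$ instead.

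By Proposition~\ref{p:Pconditions}(iii), $\phi\in C^2({\rm int}(I_*))$, and ${\rm int}(I_*)=I_*$ since $I_*$ is open. Hence $\phi'$ is continuous on $J$ and vanishes a.e., so it vanishes identically and $\phi\equiv\gamma_k$ on $J$.

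Since $I_s$ is finite, the components of $I_*$ are intervals $(a_{k-1},a_k)$, $k=1,\dots,m$, with $a_0,a_m$ the endpoints of $I$. Consecutive values satisfy $\gamma_k\ne\gamma_{k+1}$ because each $a_k$, $0<k<m$, is a jump point. The count $m>1$ holds because the wave is a shock, so $I_s\neq\varnothing$.

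Remark~\ref{r:I0} gives $g(\gamma_k)=0$, since $\phi$ is constant on a nontrivial open interval. Proposition~\ref{p:Pconditions}(i), i.e. \eqref{e:llsk}$_1$, gives continuity of $P(\phi)$ across each $a_k$, hence $P(\gamma_k)=P(\gamma_{k+1})$; this is \eqref{e:ggPP}.

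For the speed we use \eqref{e:hypo}$_2$ at any jump. Here $P(\phi)'=0$ on both sides of the jump point, so $c\,(\phi_r-\phi_\ell)=0$, and $\phi_r\ne\phi_\ell$ forces $c=0$. The representation \eqref{e:step}, including the value at each jump point, follows from the convention $\phi(\xi_s)=\phi(\xi_s^-)$.

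\emph{Sufficiency.} Let $\phi$ be given by \eqref{e:step} with \eqref{e:ggPP}, and take $c=0$. We verify \eqref{e:solution} directly. By \eqref{e:ggPP}, $P(\phi)\equiv P(\gamma_1)$ is constant on $I$, so
\[
\int_I P(\phi)\psi''\,d\xi = P(\gamma_1)\int_I \psi''\,d\xi = 0
\]
for every $\psi\in C_c^\infty(I)$. On the right-hand side, the term $c\int_I\phi\psi'\,d\xi$ vanishes because $c=0$, and $\int_I g(\phi)\psi\,d\xi=0$ because $g(\phi)\equiv0$.

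The function $\phi$ has finitely many jump points, at the $a_k$ with $0<k<m$, and is continuous elsewhere, so it is an admissible profile in Definition~\ref{d:regular-shock}. It is discontinuous since $m>1$ and $\gamma_k\neq\gamma_{k+1}$, so it is a shock traveling wave, and clearly $\phi'=0$ a.e.

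\emph{Main obstacle.} The only delicate point is the constancy of $\phi$ on the components of $I_*$ in the necessity part. The a.e. hypothesis alone does not suffice, and the argument relies on the $C^2$ regularity of Proposition~\ref{p:Pconditions}(iii) on all of $I_*$, including the parts of $I_0$ where $D(\phi)=0$. One should double-check that Proposition~\ref{p:Pconditions}(iii) really covers those points of $I_0$ not in ${\rm int}(I_0)$.

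If it does not, a fallback is available on a component $J$ where $\phi$ is nonconstant. Such a $J$ contains points of $I_\pm$, since $I_0\cap J$ maps into the finite zero set of $D$ and a continuous nonconstant $\phi$ takes a whole interval of values. On a component of $I_\pm\cap J$, $\phi$ is $C^2$, hence constant there, and that constant is a zero of $g$. Then $\phi$ takes only finitely many values on $J$ (zeros of $D$ or of $g$), and by continuity it is constant on $J$.
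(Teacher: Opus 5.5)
Your proof is correct and follows the same route as the paper. You show $\phi$ is constant on each component of $I_*$, use continuity of $P(\phi)$ and of $P(\phi)'+c\phi$ across the jumps (\eqref{e:llsk}) to get $P(\gamma_k)=P(\gamma_{k+1})$ and $c=0$, obtain $g(\gamma_k)=0$ from the weak formulation, and check \eqref{e:solution} directly for sufficiency. The only difference is that the paper merely asserts the constancy step, and your doubt about Proposition~\ref{p:Pconditions}(iii) is well founded: its proof only treats ${\rm int}(I_0)\cup I_\pm$, not isolated points where $D(\phi)=0$. So it is your fallback argument that actually closes this step: $\phi(J)$ is a connected subset of the finite set of zeros of $D$ and $g$, hence a single point.
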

\begin{proof}
Assume \eqref{e:ggPP}. The step function in \eqref{e:step}, defined on $I:=(a_0, a_m)$, makes $P(\phi)$ and  $g(\phi)$ constant on $I$. Moreover, since  $g(\phi)=0$ on $I$, condition \eqref{e:solution} reduces to $P(\gamma_1)\int_I \psi'' \, d\xi=c\int_I\phi \psi' \, d\xi$, which is satisfied for every $\psi\in C_c^\infty(I)$ iff $c=0$. Hence, $\phi$ is a shock traveling-wave of equation \eqref{e:E} with $I_{s}= \{a_1, ..., a_{m-1}\}$ and speed $c=0$.

Conversely, let $\phi$ be a shock traveling wave  of \eqref{e:E} with $\phi'=0$ a.e.\! in $I$. Then $I=(a_0,a_m)$ for some $-\infty\le a_0<a_m\le \infty$, $I_{s}=\{a_1, ..., a_{m-1}\}$ ($m>1$) and
\[
I_*=\displaystyle \bigcup_{k=1}^{m} (a_{k-1}, a_k).
\] 
The profile $\phi$ is necessarily constant in every interval  $(a_{k-1}, a_k]\cap I$, say $\phi:=\gamma_k$ with $\gamma_k \in [0,1]$ and $\gamma_k \ne \gamma_{k+1}$; then $\phi$ satisfies \eqref{e:step}. The step function $P(\phi)$ is continuous by Proposition \ref{p:Pconditions}{\em (i)}; then it is necessarily constant and $\eqref{e:ggPP}_2$ is satisfied. Furthermore, $P(\phi)'+c\phi=c\phi$ is continuous in $I_*=I\setminus I_s$ and can be continuously extended  to $I_s$ by Proposition \ref{p:Pconditions}{\em (ii)}; this implies $c=0$.  At last, the identities in $\eqref{e:ggPP}_1$ follow from \eqref{e:solution}.
\end{proof}


\section{Results on regular semi-wavefronts}\label{s:RSWFs}
\setcounter{equation}{0}
In this section, we assume conditions (R), (P) and (g). We combine results from \cite{BCM2, BCM3} to derive key tools concerning regular, monotone profiles, which will be instrumental in proving our main results on shock wavefronts, in Section \ref{s:WF}.

First, we introduce a change of variables which reduces \eqref{e:EP} to a singular first-order equation. As a consequence of Lemma \ref{l:Dphine0}, the profile of a monotone regular traveling wave is invertible in  $J := I_{\pm}\setminus \{\xi\in I \, : \, \phi(\xi)= 0, \gamma, 1\}$. The connected components of $\phi(J)$ are contained in the following sets
\begin{equation}\label{e:intervals}
(0, \alpha), \quad (\alpha, \gamma), \quad (\gamma, \beta),\quad (\beta, 1)
\end{equation} 
and the inverse function $\varphi^{-1}: \varphi(J) \to J$ is smooth. Then we can define
\begin{equation}\label{e:z}
z(\mu):=D(\mu)\phi'\left(\phi^{-1}(\mu)\right) \neq 0, \quad \mu \in \varphi(J).
\end{equation}
Since $D$ has constant sign on every interval in \eqref{e:intervals}, it follows that also $z$ has  constant sign on every connected component of $\phi(J)$ and $\sgn(z)=\sgn(D)$ if $\phi$ is increasing while $\sgn(z)=-\sgn(D)$ if $\phi$ is decreasing. By Proposition~\ref{p:Pconditions}, $\phi \in C^2(J)$, hence  $z \in C^1(\phi(J))$ and $z$ satisfies the first-order singular equation
\begin{equation}\label{eq:z}
  \dot z(\mu)=-c-\frac{D(\mu)g(\mu)}{z(\mu)}, \quad  \mu \in \phi(J).
\end{equation}
Therefore the function $z$ allows to reduce the study of the second-order equation \eqref{e:EP} to that of the first-order equation \eqref{eq:z}. 

For $E\subset \mathbb{R}$, let $\overline{E}$ denote the closure of $E$. Now, we show that $z$ can be continuously extended to $\overline{\phi(I)}$. 

 \begin{lemma}\label{l:extended} Let $\phi$ be the profile of a monotone regular traveling-wave solution and  $z$ the corresponding function as in  \eqref{e:z}. Then $z$ can be continuously extended to $\overline{\phi(I)}$. 
 \end{lemma}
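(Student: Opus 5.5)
The plan is to express $z$ directly through $P(\phi)'$, which Proposition~\ref{p:Pconditions} already tells us is continuous, and to handle the points of $\overline{\phi(I)}$ that $\phi(J)$ misses separately.

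\textbf{Reformulation.} By \eqref{e:z}, $z(\mu)=P(\phi)'(\phi^{-1}(\mu))$ for $\mu\in\phi(J)$. Since $\phi$ is a regular traveling wave, $I_s=\varnothing$. By Proposition~\ref{p:Pconditions}(i), $P(\phi)\in C^1(I)$, and by \eqref{e:item1}, $P(\phi)'=G-c\phi+k$ is continuous and locally bounded on $I$. Since $\phi$ is monotone and continuous, $\phi(I)$ is an interval. The set $\overline{\phi(I)}\setminus\phi(J)$ consists of finitely many points: endpoints of the intervals in \eqref{e:intervals} (namely $0,\alpha,\gamma,\beta,1$), together with the endpoints of $\phi(I)$. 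It therefore suffices to show that $z(\mu)$ has a limit as $\mu\to\mu_0^\pm$ for each such $\mu_0$, and that the one-sided limits agree whenever $\mu_0$ is interior to $\phi(I)$.

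\textbf{Interior points.} Let $\mu_0\in\phi(I)$ be interior. The level set $\phi^{-1}(\mu_0)$ is either a single point $\xi_0$ or a closed interval $[\xi_1,\xi_2]$ on which $\phi\equiv\mu_0$.
\begin{itemize}
\item In the single-point case, $\mu\to\mu_0$ forces $\phi^{-1}(\mu)\to\xi_0$. Continuity of $P(\phi)'$ then gives $z(\mu)\to P(\phi)'(\xi_0)$ from both sides.
\item In the interval case, $P(\phi)'=0$ on $(\xi_1,\xi_2)$, and by continuity also at $\xi_1$ and $\xi_2$. As $\mu\to\mu_0^\pm$, $\phi^{-1}(\mu)$ tends to $\xi_1$ or $\xi_2$, so both one-sided limits of $z$ equal $0$.
\end{itemize}
In either case $z$ extends continuously at $\mu_0$.

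\textbf{Endpoints of $\phi(I)$.} Let $\mu_0$ be an endpoint, say $\mu_0=\phi(\xi)$ as $\xi\to b=\sup I$.
\begin{itemize}
\item If $b<\infty$ and $\phi$ extends continuously to $b$, the limit of $P(\phi)'$ exists there, since $G$ is an integral of a bounded function and $\phi$ has a limit. This gives $\lim z$.
\item If $b=\infty$, I would use the monotone-limit argument. By \eqref{e:item1}, $P(\phi)'=G-c\phi+k$, where $G'=g(\phi)$ has eventually constant sign, because $\phi\to\mu_0$ monotonically and $g$ has finitely many zeros. Hence $G$ is eventually monotone, so $G(\xi)$ has a limit in $[-\infty,\infty]$, and therefore so does $P(\phi)'$. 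That limit must be finite, and in fact $0$, because $P(\phi)$ converges to $P(\mu_0)$. A derivative with a limit $L\ne0$ would make $P(\phi)$ unbounded, so $L=0$. This yields $z(\mu)\to0$.
\end{itemize}

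\textbf{Main obstacle.} The delicate case is $b=\infty$: one must exclude oscillation of $P(\phi)'$. The eventual monotonicity of $G$ handles this. An alternative route is through \eqref{eq:z}: $z\dot z=-cz-Dg$ is bounded, so $z^2$ is Lipschitz on each component of $\phi(J)$, which also gives existence of the limits of $|z|$. Because $z$ has constant sign on each component, limits of $|z|$ determine limits of $z$. I would keep this second argument as a backup.
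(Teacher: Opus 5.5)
Your proof is correct, but it handles the endpoints of $\phi(I)$ differently from the paper.

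\textbf{The paper's route.} It works in the phase variable $\mu$. On a component where $Dg$ has constant sign, it integrates the singular equation \eqref{eq:z}. Sign-constancy makes $\int_\sigma^\eta Dg/(-z)\,ds$ have a limit in $[-\infty,\infty]$. An infinite limit is excluded because $|z|\to\infty$ would make the integrand bounded. Only at the interior points $\alpha,\beta,\gamma$ does the paper return to $\xi$ and use continuity of $P(\phi)'$, as you do.

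\textbf{Your route.} You stay in $\xi$ throughout, writing $z=P(\phi)'\circ\phi^{-1}$ and using the first integral \eqref{e:item1}. At a finite end of $I$, $G$ is Lipschitz. At an infinite end, $G$ is eventually monotone, and boundedness of $P(\phi)$ forces the limit $0$.

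\textbf{Comparison.} Your argument is more elementary and uses only (R) and Proposition~\ref{p:Pconditions}. At infinite ends it already gives $z=0$, which is the content of Lemma~\ref{l:z1z0}; the paper's proof of that lemma is essentially your argument. The paper's route needs no split between finite and infinite ends of $I$, and it stays in the $z$-formalism used later for the comparison results. The price is that it relies on the sign structure of $Dg$ on each component.

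\textbf{Two small points.}
\begin{itemize}
\item If an endpoint $\mu_0$ of $\phi(I)$ is attained, then $\phi\equiv\mu_0$ on a half-line $[\xi_2,b)$. In that case $\phi^{-1}(\mu)\to\xi_2$, not $b$. Treat it like your interior plateau case: the limit is $P(\phi)'(\xi_2)=0$.
\item Your backup argument assumes $z$ is bounded, which must be shown first. One way is the estimate $\bigl|\tfrac{d}{d\mu}|z|\bigr|\le |c|+\sup|Dg|$ wherever $|z|\ge 1$. This is exactly the blow-up issue the paper's integral argument deals with.
\end{itemize}
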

 \begin{proof} In the following we assume $\phi$ decreasing; an increasing profile  $\phi$ is treated similarly.  
 By the continuity of $\phi$ we have $\overline{\phi(I)}=:[h,k]$ for some $h<k$. 

\smallskip

{\em (i)} Assume first that  $I= I_{\pm}$ and $\{\xi\in I \colon \phi(\xi)= 0, \gamma, 1\}=\emptyset$. In this case $\phi(I)$ is a subset of one of the intervals in \eqref{e:intervals} and the function $\frac{Dg}{-z}$ does not change sign in $\phi(I)$. Moreover  $z \in C^1(h,k)$ as noted above. So  it remains to show that $z$ can be extended by continuity to $[h,k]$. By integrating \eqref{eq:z} in $(\sigma, \eta) \subset \phi(I)$, we obtain 
 \begin{equation}\label{eq:zintegrale}
 z(\eta)-z(\sigma)=-c(\eta -\sigma) +\int_{\sigma}^{\eta}\frac{D(s)g(s)}{-z(s)} \, ds.
 \end{equation}
The limits
 \[
\lim_{\sigma \to h^+} \int_{\sigma}^{\eta}\frac{D(s)g(s)}{-z(s)} \, ds \quad \text{and} \quad  \lim_{\eta \to k^-} \int_{\sigma}^{\eta}\frac{D(s)g(s)}{-z(s)} \, ds 
\]
exist, being possibly $\pm \infty$, since both integrands have constant sign in $(h,k)$. We claim that the case $\pm \infty$ cannot occur. In fact, when
 \begin{equation}\label{eq:z1}
 \lim_{\eta \to k^-} \int_{\sigma}^{\eta}\frac{D(s)g(s)}{-z(s)} \, ds =\pm\infty,
 \end{equation}
then, by \eqref{eq:zintegrale}, the limit  $z(k^-)$ exists with the same value as the one in \eqref{eq:z1}. Hence, $\frac{Dg}{-z}$ is bounded in a left neighborhood of $k$, which contradicts \eqref{eq:z1}. Then the limit in  \eqref{eq:z1} is finite, and so also $z(k^-)$ is finite. An analogous reasoning applies to prove that $z(h^+)$ is finite, and then $z$ can be extended by continuity to $[h,k]=\overline{\phi(I)}$. 

\smallskip

{\em (ii)} Now let $\alpha \in \phi(I)$; then there exists a unique $\xi_{\alpha}\in I$ with  $\phi(\xi_{\alpha})=\alpha$ (see Remark~\ref{r:I0}). We claim that $\alpha \in {\rm int}\left(\phi(I)\right)$. In fact, by the monotonicity of $\phi$, if $\alpha \in \partial\left( \phi(I)\right)$ then either $I=(a, \xi_{\alpha}]$ with $a\in \mathbb{R}\cup\{-\infty\}$ or  $I=[\xi_{\alpha}, b)$ with $b\in \mathbb{R}\cup\{\infty\}$, in contradiction with $I$ open. This proves the claim. Hence there is $\delta>0$ such that $(\xi_{\alpha}-\delta, \xi_{\alpha})\cup (\xi_{\alpha}, \xi_{\alpha}+\delta) \subset  I_{\pm}\setminus \{\xi \, : \, \phi(\xi)= 0, \gamma, 1\}$. Consequently $z$ can be extended by continuity to each of the intervals $[\phi(\xi_{\alpha}+\delta), \alpha]$ and $[\alpha, \phi(\xi_{\alpha}-\delta)]$. Moreover
\begin{equation}\label{e:zinalfa}
z(\alpha^\pm)=\lim_{\xi \to \xi_{\alpha}^\pm}D\left(\phi(\xi)\right)\phi'(\xi)=\lim_{\xi \to \xi_{\alpha}^\pm}P(\phi)'(\xi).
\end{equation}
The two previous limits are equal by the continuity of $P(\phi)'$ in $\xi_{\alpha}$ (see Proposition~\ref{p:Pconditions}{\em (ii)}), and so $z$ is continuous in $\alpha$. The same reasoning applies to show the continuity of $z$ in $\beta$, if $\beta \in \phi(I)$, and in $\gamma$, if both $\gamma \in \phi(I)$ and $\phi^{-1}\{\gamma\}=\{\xi_{\gamma}\}$. 

\smallskip
{\em (iii)} It remains to consider the case when $\phi=\gamma$ on a non-trivial interval  $[\xi^1_{\gamma}, \xi^2_{\gamma}]\subset I$. Notice that $\phi'=0$, implying $P(\phi)'=0$,  in $(\xi^1_{\gamma}, \xi^2_{\gamma})$. Therefore, again by the continuity of $P(\phi)'$ in $[\xi^1_{\gamma}, \xi^2_{\gamma}]$ (see Proposition~\ref{p:Pconditions}), we obtain $z(\gamma^-)=z(\gamma^+)=0$ and $z$ is continuous in $\gamma$ also in this case. The proof is complete.
\end{proof}

\begin{remark}\label{r:z=0}
{\rm 
Let $\phi$ be the profile of a monotone, regular traveling wave. Assume $\alpha \in \phi(I)$.  By \eqref{e:zinalfa} and the sign conditions on $P'$ (see (P)) we deduce $z(\alpha)=0$. Similarly, $z(\beta)=0$ if $\beta \in \phi(I)$. On the contrary, if $\gamma \in \phi(I)$ we have $z(\gamma)=0$ when $\phi=\gamma$ on a non-trivial interval. }
\end{remark}

In the following lemma we show that the values $z(1)$ and $z(0)$ are prescribed
when $\phi$ is a semi-wavefront from $1$ or to $0$, respectively.  

\begin{lemma}\label{l:z1z0} If $\phi$ is a regular semi-wavefront from $1$ (to $0$), then  $z(1)=0$ ($z(0)=0$, respectively).
\end{lemma}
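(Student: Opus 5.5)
We treat the case of a semi-wavefront from $1$; the case to $0$ is symmetric. Let $\phi$ be a regular semi-wavefront from $1$ on $I=(-\infty,a)$, so $\phi(\xi)\to1$ as $\xi\to-\infty$. By Lemma~\ref{l:extended}, $z$ extends continuously to $\overline{\phi(I)}$, which contains $1$. Hence the value $z(1)$ is well defined, and it equals $\lim_{\xi\to-\infty} P(\phi)'(\xi)$. Indeed, for $\xi$ close to $-\infty$ we have $\phi(\xi)\in(\beta,1)$, where $D>0$, and then $z(\phi(\xi))=D(\phi(\xi))\phi'(\xi)=P(\phi)'(\xi)$.

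It therefore suffices to prove that $P(\phi)'(\xi)\to0$ as $\xi\to-\infty$. We already know that this limit exists and equals $L:=z(1)$. Suppose by contradiction that $L\neq0$. Then $|P(\phi)'|\ge |L|/2$ on some half-line $(-\infty,\xi_0)$. Integrating over $(\xi,\xi_0)$ gives $|P(\phi(\xi_0))-P(\phi(\xi))|\ge \frac{|L|}{2}(\xi_0-\xi)\to\infty$ as $\xi\to-\infty$. This is impossible, because $P$ is bounded on $[0,1]$. Hence $L=0$, that is, $z(1)=0$.

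An alternative route uses the integrated identity \eqref{e:item1}: $P(\phi)'=G-c\phi+k$. Here $G$ is monotone on a neighborhood of $-\infty$, because $g(\phi)>0$ once $\phi\in(\gamma,1)$. So $G(-\infty)$ exists, possibly infinite, and the same boundedness argument forces the limit to be $0$. The first argument is cleaner, since Lemma~\ref{l:extended} already provides the existence of the limit.

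For $\phi$ to $0$, we have $\phi\in(0,\alpha)$ near $+\infty$, where $D>0$. The identical argument then gives $z(0)=\lim_{\xi\to+\infty}P(\phi)'(\xi)=0$.

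The only delicate point is the identification of $z(1)$ with the limit of $P(\phi)'$, which requires the boundary value from Lemma~\ref{l:extended}. One must also note that $\phi$ does not reach $1$ at a finite $\xi$ while remaining monotone and non-constant on the half-line. If $\phi\equiv1$ on some $(-\infty,\xi_1)$, then $\phi'=0$ there, so $z(1)=0$ trivially by continuity of $P(\phi)'$. Otherwise $\phi<1$ on $I$ and the argument above applies.
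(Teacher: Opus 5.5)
Your proof is correct and follows the paper's argument. Like the paper, you dispose of the case $\phi\equiv1$ on a half-line via continuity of $P(\phi)'$, then show that $\lim_{\xi\to-\infty}P(\phi)'(\xi)$ exists and must vanish by a boundedness argument. The differences are minor: you take existence of the limit directly from Lemma~\ref{l:extended}, whereas the paper re-derives it by integrating \eqref{e:EP} (your ``alternative route''). You also reach the contradiction through boundedness of $P(\phi)$ rather than of $\phi$, which handles a possibly degenerate $D(1)=0$ without letting $\phi'\to-\infty$ as in the paper.
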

\begin{proof} Let $\phi$ be a regular semi-wavefront from $1$ defined on the interval $I$. Notice that $z(1)$ is well defined, by Lemma \ref{l:extended}, and it is  the limit of $z(\phi)$ when $\phi \to 1^-$. 

If there exists $\xi_1 \in I$ with $\phi(\xi)=1$ for $\xi \le \xi_1$ and $\phi(\xi)<1$ for $\xi > \xi_1$, then $z(1)=0$ because of the continuity of $P(\phi)'$ in $\xi_1$ (see Proposition~\ref{p:Pconditions}). 

Assume instead $\phi(\xi)<1$ for all $\xi \in I$. We claim that
\begin{equation}\label{e:minus_inf_limm}
\lim_{\xi \to -\infty} D\left(\phi(\xi)\right)\,\phi'(\xi) = 0.
\end{equation}
In fact, consider $(\zeta_1,\zeta_2)$ with $\phi(\zeta_2)>\beta$. Hence $\phi \in C^2(\zeta_1, \zeta_2)$ (see Proposition~\ref{p:Pconditions}) and integrating \eqref{e:EP} over $(\zeta_1,\zeta_2)$  we obtain:
\[
D\left(\phi(\zeta_1)\right)\phi'(\zeta_1)+c\phi(\zeta_1)
= D\left(\phi(\zeta_2)\right)\phi'(\zeta_2)+ c\phi(\zeta_2) -\int_{\zeta_1}^{\zeta_2} g\left(\phi(s)\right)\,ds.
\]
We know that $\phi(\zeta_1) \to 1$ as $\zeta_1 \to -\infty$. 
By the choice of $\zeta_2$ and the monotonicity of $\phi$ we also have that $g(\phi(s))>0$ on $(\zeta_1,\zeta_2)$. 
Consequently, the limit 
\begin{equation}\label{e:1limit}
\lim_{\zeta_1 \to -\infty} D(\phi(\zeta_1))\phi'(\zeta_1)
\end{equation}
exists. 
If \eqref{e:1limit} is different from zero, then $\phi'(\zeta_1) \to \ell\in [-\infty, 0)$ as $\zeta_1\to-\infty$; this contradicts the boundedness of $\phi$. Therefore the claim \eqref{e:minus_inf_limm} is true and it implies $z(1)=0$ by \eqref{e:z}. 

In the same way one proves that $z(0)=0$ if $\phi$ is a regular semi-wavefront to $0$.
\end{proof}
The following proposition concerns the existence of semi-wavefronts.

\begin{proposition}\label{p:twins}
Consider equation \eqref{e:EP}. Then, for every $c\in\R$ we have:
\begin{enumerate}[label=(\arabic*)]
\item There exists a unique regular semi-wavefront from $1$, connecting $1$ to $\beta$.

Denote by $\phi_c$ its profile and by $z_c$ the continuous extension to $[\beta,1]$ of the corresponding funtion defined in \eqref{e:z}.  If $c_1 < c_2$ then $z_{c_1}(\lambda) < z_{c_2}(\lambda)$ for every $\lambda \in (\beta, 1)$, and $z_{c_1}(\beta) \le z_{c_2}(\beta)$.

  \item There exists a unique regular semi-wavefront to $0$, connecting $\alpha$ to $0$.

Let $\psi_c$ be its profile and $z_c$ the continuous extension to $[0,\alpha]$ of the corresponding function defined in \eqref{e:z}. If $c_1 < c_2$ then $z_{c_1}(\mu) > z_{c_2}(\mu)$ for every $\mu \in (0,\alpha)$, and $z_{c_1}(\alpha)\ge z_{c_2}(\alpha)$.
\end{enumerate}
\end{proposition}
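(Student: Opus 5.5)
The plan is to reduce, on each interval, the existence of semi-wavefronts to the singular first-order problem \eqref{eq:z}, and to use the results of \cite{BCM2, BCM3} on that problem. Consider item (1). A decreasing regular profile from $1$ which takes values in $(\beta,1)$ has $D>0$ there. By the sign rule stated after \eqref{e:z}, the associated $z$ is negative on $(\beta,1)$. By Lemma \ref{l:z1z0} we have $z(1)=0$. So I will study the boundary-value problem
\[
\dot z=-c-\frac{D(\mu)g(\mu)}{z},\quad z<0 \ \text{in } (\beta,1),\quad z(1)=0 .
\]
On $(\beta,1)$ the product $Dg$ is positive, so the problem is of monostable type on this interval, and $\beta$ plays the role of a degenerate point of $D$. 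The classical result (cf. \cite{BCM2}, via upper and lower solutions) gives, for every $c\in\R$, a unique solution $z_c\in C^0[\beta,1]\cap C^1(\beta,1)$ of this problem. Uniqueness near $\mu=1$ follows because two negative solutions $z_1,z_2$ satisfy $(z_1-z_2)^\cdot=Dg(z_1-z_2)/(z_1z_2)$ with a positive coefficient. Integrating backward from $1$, the difference can then only vanish identically.

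To go from $z_c$ back to a profile, I solve $\phi'=z_c(\phi)/D(\phi)$ with $\phi(\xi_0)\in(\beta,1)$. This equation is separable, and it defines a decreasing $\phi$ on a maximal interval. Two cases arise at the endpoint $1$. If $\int^1 D/|z_c|\,d\mu=\infty$, then $\phi\to1$ as $\xi\to-\infty$. Otherwise $\phi$ reaches $1$ at a finite $\xi$, and I extend it by $\phi\equiv1$ to the left. Since $z_c(1)=0$, the flux $P(\phi)'$ is continuous there, so the extension is still a weak solution. The same computation at $\beta$ shows that $\phi$ reaches $\beta$ at a finite point, because $D(\mu)/|z_c(\mu)|$ is integrable near $\beta$ as long as $z_c(\beta)\ne0$. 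If instead $z_c(\beta)=0$, the profile may only approach $\beta$. In either case we obtain a semi-wavefront from $1$ connecting $1$ to $\beta$, defined on $(-\infty,a)$. Uniqueness up to shifts follows from the uniqueness of $z_c$ together with Lemma \ref{l:extended} and Lemma \ref{l:Dphine0}: any such profile is invertible where $g(\phi)\neq0$ and $D(\phi)\ne0$, so it produces a solution of the same $z$-problem.

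For the comparison, take $c_1<c_2$ and set $w=z_{c_2}-z_{c_1}$. Then
\[
\dot w=(c_1-c_2)+\frac{Dg}{z_{c_1}z_{c_2}}\,w .
\]
Near $1$ both functions vanish. Suppose $w(\lambda_0)\le0$ at some $\lambda_0\in(\beta,1)$, and let $\lambda_0$ be the largest such point. At that point $\dot w(\lambda_0)=c_1-c_2<0$, which contradicts $w>0$ immediately to its right. It remains to show that $w>0$ in a left neighbourhood of $1$. For this I use the integral form $w(\lambda)=\int_\lambda^1\bigl[(c_2-c_1)-Dg\,w/(z_{c_1}z_{c_2})\bigr]$ together with a Gronwall-type argument, or alternatively the strict comparison lemma from \cite{BCM2}. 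This is the main obstacle, since the coefficient $Dg/(z_{c_1}z_{c_2})$ blows up at $1$. The first thing I would try is the estimate $z_c(\mu)\sim -\kappa(1-\mu)$, which makes that coefficient of order $1/(1-\mu)$ but with a favorable sign. The non-strict inequality at $\beta$ then follows by continuity.

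Item (2) is symmetric. On $(0,\alpha)$ we have $D>0$ and $g<0$, and the profile is decreasing, so $z<0$ with $z(0)=0$ by Lemma \ref{l:z1z0}. The problem on $(0,\alpha)$ is again monostable-type, now with the singular point at $0$. The same existence and uniqueness argument applies there, and the sign of the comparison reverses because the endpoint condition is now imposed at the left end $0$. Integrating forward from $0$ gives $\dot w(0^+)$-type behaviour equal to $c_1-c_2<0$, which yields $z_{c_1}>z_{c_2}$.
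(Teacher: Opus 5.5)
Your route is sound and differs from the paper's mainly in being self-contained. The paper proves this proposition almost entirely by citation. Existence and uniqueness of the profile come from \cite[Proposition 3]{BCM2}. Uniqueness of $z_c$ and the strict comparison come from \cite[Lemma 4.1]{BCM3}. Item (2) is reduced to item (1) through the reflection $\psi(\xi)=1-\phi(-\xi)$, which turns the problem on $(0,\alpha)$ into \eqref{e:z_l}, i.e.\ the problem of item (1) with speed $-c$, so the reversed inequality comes for free.

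You still rely on \cite{BCM2} for the one delicate point, namely existence of a negative solution of \eqref{eq:z} with $z(1)=0$ at the singular endpoint. Beyond that, you prove uniqueness and comparison directly from the linear equation for the difference, rebuild the profile by quadrature, and treat $(0,\alpha)$ directly. Your uniqueness argument is correct, since $|z_1-z_2|$ is nondecreasing towards $1$ and vanishes there. A hand-made comparison is worthwhile, because the strict monotonicity in $c$ is what makes $\mathcal F$ strictly increasing in the proof of Theorem \ref{t:main}.

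The step you single out as the main obstacle is not one, and your proposed way of handling it would not work in the paper's generality. The asymptotics $z_c(\mu)\sim-\kappa(1-\mu)$ are not available under (R): $g$ is only continuous and $D(1)$ may vanish, so $z_c$ can vanish at $1$ with a different power of $1-\mu$. You need neither $w>0$ near $1$ nor a largest zero. Take any $\lambda_0\in(\beta,1)$ with $w(\lambda_0)\le0$. Wherever $w\le0$ one has $\dot w=(c_1-c_2)+\frac{Dg}{z_{c_1}z_{c_2}}\,w\le c_1-c_2<0$. Hence $w$ stays nonpositive on $[\lambda_0,1)$ with $w(\lambda)\le w(\lambda_0)+(c_1-c_2)(\lambda-\lambda_0)$, and letting $\lambda\to1^-$ gives $0=w(1)\le(c_1-c_2)(1-\lambda_0)<0$, a contradiction.

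In item (2), the claim that $\dot w(0^+)=c_1-c_2$ is also not justified, because the coefficient $Dg/(z_{c_1}z_{c_2})$ blows up at $0$ and its product with $w$ need not vanish. Use the mirror argument with $w=z_{c_2}-z_{c_1}$ as before. On $(0,\alpha)$ this coefficient is negative, so $\dot w\le c_1-c_2$ wherever $w\ge0$. A point $\mu_0$ with $w(\mu_0)\ge0$ then propagates leftwards to $w(0)\ge(c_2-c_1)\mu_0>0$, contradicting $w(0)=0$.

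Finally, your hedge that for $z_c(\beta)=0$ the profile may only approach $\beta$ is unnecessary. Integrating \eqref{eq:z} gives $\int_\beta^\mu Dg/|z_c|\,d\sigma=z_c(\mu)-z_c(\beta)+c(\mu-\beta)<\infty$. Since $g(\beta)>0$, also $\int_\beta^\mu D/|z_c|\,d\sigma<\infty$. So $\beta$ is always reached at a finite $\xi$, which is what the gluing \eqref{e:gluing} with $\phi_r=\beta$ requires.
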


\begin{figure}[htb]
\begin{center}
\begin{tikzpicture}[>=stealth, scale=0.8]
\draw[->] (-1,0) --  (9,0) node[below]{$\phi$} coordinate (x axis);
\draw (0,0.1) node[above]{$0$};
\draw (2,0.1) node[above]{$\alpha$};
\draw (5,0.1) node[above]{$\sigma$};
\draw (6,0.1) node[above]{$\beta$};
\draw (8,0.1) node[above]{$1$};

\draw[ ->] (8,0) .. controls (7.4,1) and (6.6,1) .. (6.1,0.15) node[midway,above]{\footnotesize$c\in\R$}; 
\draw[->] (1.9,0.15) .. controls (1.4,1) and (0.7,1) .. (0.1,0.15)  node[midway,above]{\footnotesize$c\in\R$};

\draw[ ->] (8,0) .. controls (7,-1) and (6,-1) .. (5.1,-0.15) node[midway,below]{\footnotesize$c>0$}; 
\draw[->] (4.9,-0.15) .. controls (3,-1) and (2,-1) .. (0.1,-0.15)  node[midway,below]{\footnotesize$c<0$};

\filldraw[black] (0,0) circle (3pt);
\filldraw[black] (0,0) circle (3pt);
\filldraw[black] (5,0) circle (1pt);
\filldraw[black] (8,0) circle (3pt);
\draw (2,0) circle (3pt);
\draw (6,0) circle (3pt);

\end{tikzpicture}
\end{center}
\caption{\label{f:resume_2}{Pictorial representation of the results in Propositions~\ref{p:twins} and~\ref{p:2eta}. The arrows indicate the direction when $\xi$ increases.}}
\end{figure}
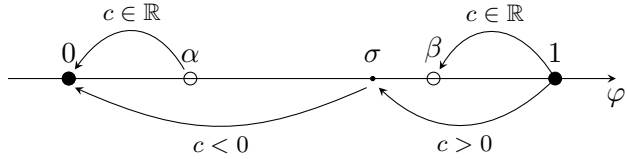
\noindent Given $\varphi \in [0,1]$, and a function $h$, defined in $[0,1]$,  we denote $\bar \varphi:=1-\varphi$, and
\begin{equation}
\label{barDtildeg}
\bar h(\varphi):=h(\bar\phi) \quad \mbox{ and } \quad \tilde h(\varphi):=-h(\bar\phi) \ \mbox{ for } \ \varphi \in [0,1].
\end{equation}

\begin{proof}[Proof of Proposition~\ref{p:twins}]
We divide the proof into two parts.

\medskip
\noindent\textit{Proof of (1).}
 Since $D > 0$ in $(\beta,1)$, $g > 0$ in $[\beta,1)$, and $D(\beta) =g(1) = 0$, the existence and uniqueness of semi-wavefronts from $1$ to $\beta$ follows from \cite[Proposition 3]{BCM2}.


As observed in \eqref{eq:z}, Lemmas~\ref{l:extended} and \ref{l:z1z0}, we have $z_c\in C[\beta, 1]$ and $z$ satisfies
\begin{equation}
\label{e:z_r}
\begin{cases}
\dot{z}(\varphi) = -c - \dfrac{D(\varphi)\,g(\varphi)}{z(\varphi)}, & \varphi \in (\beta,1), \\[4pt]
z(\varphi) < 0, & \varphi \in (\beta,1), \\[4pt]
z(1) = 0.
\end{cases}
\end{equation}
The uniqueness  of $z_c$ follows from \cite[Lemma 4.1(a)]{BCM3}, while $z_{c_1}(\lambda) < z_{c_2}(\lambda)$ for every $\lambda \in [\beta, 1)$ by \cite[Lemma 4.1(b)]{BCM3} if $c_1<c_2$. At last, we have $z_c(\beta) = 0$ for every $c$ greater than a finite (positive) threshold by \cite[Lemma 4.1(d) and Corollary 4.1]{BCM3}. This proves {\em (1)}.

\medskip
\noindent\textit{Proof of (2).}
Consider equation \eqref{e:EP} for $\psi$ replacing $\phi$ and make the change of variables \(\psi(\xi) := 1 - \varphi(-\xi)\). The equation for $\phi$ is
\begin{equation}\label{e:cambio_variabile}
\tilde{P}(\phi)''-c\phi'+\tilde{g}(\phi)=0.
\end{equation} 
Since \(\tilde{P}\,' > 0\), \(\tilde{g} > 0\) in \([\bar{\alpha},1)\) and \(\tilde{P} \,'(\bar{\alpha}) = \tilde{g}(1) = 0\), we deduce the existence and uniqueness of a unique profile $\phi$ from $1$ to  $\bar\alpha$ by \cite[Proposition 3]{BCM2}. The conclusion follows by inverting the transformation.

Let $z$ be the function defined as in \eqref{e:z} by using the profile $\psi$ and denote  $w(\phi):=z(1-\phi)$. The function $w$ is defined from the solution of \eqref{e:cambio_variabile} as in \eqref{e:z} and, thanks to \eqref{eq:z} and Lemma~\ref{l:z1z0}, it satisfies the boundary-value problem
\begin{equation}
\label{e:z_l}
\begin{cases}
\dot{w}(\varphi) = c - \dfrac{\bar{D}(\varphi)\, \tilde{g}(\varphi)}{w(\varphi)}, & \varphi \in (\bar{\alpha},1), \\
w(\varphi) < 0, & \varphi \in (\bar{\alpha},1), \\
w(1) = 0.
\end{cases}
\end{equation}
We conclude by applying to \eqref{e:z_l} the same discussion of problem \eqref{e:z_r}.
\end{proof}

\begin{proposition}\label{p:2eta}
Consider equation \eqref{e:EP} and let $\sigma \in (\alpha, \beta)$. Then:
\begin{enumerate}[label=(\arabic*)]
\item If there exists a regular semi-wavefront from $1$, connecting $1$ to $\sigma$, then $c > 0$.
\item If there exists a regular semi-wavefront to $0$, connecting $\sigma$ to $0$, then $c < 0$.
\end{enumerate}
\end{proposition}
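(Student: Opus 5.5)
We prove (1); (2) then follows from the change of variables $\psi(\xi)=1-\phi(-\xi)$ used in the proof of Proposition~\ref{p:twins}(2), which reverses the sign of $c$ and maps the setting of (2) to that of (1). The approach is an integral identity for the singular first-order equation \eqref{eq:z}, obtained by multiplying by $z$ and integrating over $[\sigma,1]$.

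Let $\phi$ be a regular semi-wavefront from $1$ connecting $1$ to $\sigma\in(\alpha,\beta)$, and let $z$ be the function in \eqref{e:z}. By Lemmas~\ref{l:extended} and \ref{l:z1z0}, $z$ is continuous on $[\sigma,1]$ with $z(1)=0$. Since $\phi$ is decreasing and crosses $\beta$, Remark~\ref{r:z=0} gives $z(\beta)=0$. For the sign of $z$, recall $\sgn z=-\sgn D$ because $\phi$ is decreasing: thus $z<0$ on $(\beta,1)$ and $z>0$ on $(\sigma,\beta)$, away from the possible point $\gamma$.

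On each subinterval where $z\ne0$, equation \eqref{eq:z} gives $\tfrac12 (z^2)^{\cdot}=-cz-Dg$. Integrating from $\sigma$ to $1$ (legitimate since $z^2$ is continuous and the right-hand side is bounded), we get
\[
-\tfrac12 z(\sigma)^2=-c\int_\sigma^1 z(s)\,ds-\int_\sigma^1 D(s)g(s)\,ds .
\]
Splitting instead at $\beta$ gives sharper information. On $[\beta,1]$ we have $z(\beta)=z(1)=0$, so
\[
0=-c\int_\beta^1 z\,ds-\int_\beta^1 Dg\,ds .
\]
Here $D>0$ and $g>0$ on $(\beta,1)$, so $\int_\beta^1 Dg>0$, while $\int_\beta^1 z<0$. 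Hence $c\int_\beta^1 z=-\int_\beta^1Dg<0$, which forces $c>0$.

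This already proves (1). In fact the argument only uses that the profile reaches $\beta$ with $z(\beta)=0$; the portion below $\beta$ plays no role. Consistency can be checked against Proposition~\ref{p:twins}(1), which for every $c$ yields $z_c(\beta)\le0$. The main obstacle is rigor at $\gamma$ and at points where $z$ might vanish inside $(\beta,1)$. The latter cannot happen: $z\neq0$ on $\phi(J)$, $g\ne0$ on $(\beta,1)$, and $\phi$ is not constant there by Remark~\ref{r:I0}. For a clean statement we could avoid \eqref{eq:z} altogether and integrate $(P(\phi)')' \cdot P(\phi)'$ directly in $\xi$ over the interval where $\phi\in(\beta,1)$, using Proposition~\ref{p:Pconditions} and \eqref{e:minus_inf_limm}.
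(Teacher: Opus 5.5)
Your proof is correct, but it follows a different route from the paper. The paper establishes the same boundary data you use: $z(\beta)=0$ from Remark~\ref{r:z=0}, $z(1)=0$ from Lemma~\ref{l:z1z0}, and $z<0$ on $(\beta,1)$. It then does not argue directly. Instead it invokes the comparison/threshold results of \cite[Lem.~4.1, Cor.~4.1]{BCM3}, which give a speed $c^*>0$ such that \eqref{eq:z} on $(\beta,1)$ admits a negative solution vanishing at both $\beta$ and $1$ only if $c\ge c^*$. Part (2) is handled by the same reflection $\psi(\xi)=1-\phi(-\xi)$ that you use.

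You replace that citation with the energy identity $0=-c\int_\beta^1 z\,ds-\int_\beta^1 Dg\,ds$, obtained by multiplying \eqref{eq:z} by $z$ and integrating. This is legitimate because $z$ is $C^1$ on $(\beta,1)$ and continuous on $[\beta,1]$, and the integrand is bounded. Your argument is self-contained and yields exactly the sign information needed later in Proposition~\ref{prop:necess}. The paper's route yields more, namely a strictly positive minimal speed, which reappears as $c^*_{1\beta}$ and is estimated in Proposition~\ref{prop:c*_bounds}. The price is dependence on external results.

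An even shorter direct argument exists, and the paper uses it in the proof of Proposition~\ref{prop:c*_bounds}. If $c\le 0$, then \eqref{eq:z} gives $\dot z=-c-Dg/z>0$ on $(\beta,1)$, so $z$ cannot vanish at both $\beta$ and $1$.

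Your preliminary identity over $[\sigma,1]$ and your concern about $\gamma$ are superfluous, since the argument lives entirely on $[\beta,1]$ and $\gamma<\beta$.
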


\begin{proof}
We prove each statement separately.

\smallskip
\textit{(1)} Let $\phi$ be a semi-wavefront from $1$ to $\sigma$. By restriction, there exists a profile $\hat\phi: (-\infty, \xi_\beta) \to (\beta,1)$, with $\xi_\beta < \infty$. The function $z$ defined as in \eqref{e:z} satisfies $z(\beta) = 0$ by Remark \ref{r:z=0}; moreover,
we have $z(1) = 0$ by Lemma \ref{l:z1z0}, and $z < 0$ in $(\beta, 1)$. Since  $z$ satisfies \eqref{eq:z} we deduce $c \geq c^*$ for some $c^* > 0$  by \cite[Lem. 4.1, Cor. 4.1]{BCM3}.

\smallskip
\textit{(2)} Let $\phi$ be a semi-wavefront connecting $\sigma$ to $0$. Then $\psi(\xi) := 1 - \phi(-\xi)$ solves
\[
\begin{cases}
\tilde{P}\, ''(\psi)-c \psi' + g(\psi) = 0, \\
\psi(-\infty) = 1, \quad \psi(\bar{\xi}) = \bar{\sigma}.
\end{cases}
\]
The functions $\tilde{P}$, $\tilde{g}$ on $[\bar{\sigma}, 1]$ satisfy the same sign conditions as $P$, $g$ of part {\em (1)} on $[\sigma, 1]$,  but now the speed is opposite. Hence, the conclusion follows.
\end{proof}

\section{Results on shock wavefronts}
\label{s:WF}
\setcounter{equation}{0}

This section, where we always assume (R), (P) and (g), is organized as follows.  
In Subsection~\ref{sec:necessary}, we deduce a set of \emph{compatibility structural conditions} that are necessary for the existence of shock wavefronts (see Propositions~\ref{prop:necess} and~\ref{prop:necess2}).
Subsection~\ref{sec:suff} is devoted to the proof of Theorem~\ref{t:main}, and makes use of the analytical tools developed in the preceding sections. In Subsection \ref{ss:speed} we provide some estimates on the speed $c^*$. At last, in Subsection \ref{ss:concluding-remark} we comment on the proof of Theorem \ref{t:main} from the viewpoint of hyperbolic equations. 

\subsection{Compatibility structural conditions}
\label{sec:necessary}

We begin this section by characterizing profiles satisfying $\phi'=0$ a.e.

\begin{remark}
\label{rem:2jumps}
{\rm 
By Proposition \ref{p:tratti} we have that $\phi$ is a shock wavefront of \eqref{e:E}-\eqref{e:phi-limits} with  $\phi'=0$ a.e. in $\mathbb{R}$ if and only if \begin{itemize}
\item either $P(0)=P(1)$ and, for some $\xi_0 \in \mathbb{R}$,
\[
\phi(\xi)=\begin{cases}
1 & \text{for } \xi \in (-\infty,\xi_0], \\
0 & \text{for } \xi \in (\xi_0,\infty);
\end{cases}
\]

\item or $P(0)=P(\gamma)=P(1)$ and, for some $\xi_0^1, \xi_0^2 \in \mathbb{R}$ with $\xi_0^1 < \xi_0^2$,
\[
\phi(\xi)=\begin{cases}
1 & \text{for } \xi \in (-\infty, \xi_0^1], \\
\gamma & \text{for } \xi \in (\xi_0^1, \xi_0^2], \\
0 & \text{for } \xi \in (\xi_0^2,\infty).
\end{cases}
\]
\end{itemize}
As a consequence, $c=0$.}
\end{remark}
We now come back to the general case. For every profile $\varphi$ of a shock wavefront we can define the function $z$, as in \eqref{e:z}, in the interior of each connected component of $\varphi(\mathbb{R}) \setminus \{0, \alpha, \gamma, \beta, 1\}$. The condition $\eqref{e:hypo}_2$ can be stated as
\begin{equation}\label{e:cz}
c=-\frac{z(\phi_r)-z(\phi_\ell)}{\phi_r-\phi_\ell},
\end{equation}
where, as usual, $\phi_{\ell}$ and $\phi_r$ are the side limits of $\phi$ in a jump point. We refer to Subsection \ref{ss:concluding-remark} for a geometric interpretation of \eqref{e:cz}. Lemma~\ref{l:extended} still applies and implies that $z$ can be extended by continuity to $\overline{\varphi(\mathbb{R})}$. Lemma~\ref{l:z1z0} informs us that $z(0)=z(1)=0$. 

We now investigate the admissible jump locations and wave speeds of a shock wavefront.

\begin{lemma}\label{l:signC}
Assume that equation \eqref{e:E} admits a shock wavefront $\phi$ with speed $c$ satisfying \eqref{e:phi-limits}; let $\xi_s$ be a jump point and $\phi_\ell$, $\phi_r$ the corresponding side limits of $\phi$. Then we have:
\begin{itemize}
\item[(i)] $\phi_\ell \in [0, \beta)$; moreover, if $\phi_\ell \in (\alpha, \beta)$ then $c\ge 0$;

\item[(ii)] $\phi_r \in (\alpha, 1]$; moreover, if $\phi_r \in (\alpha, \beta)$ then $c\le 0$.
\end{itemize}
\end{lemma}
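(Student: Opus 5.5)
The plan is to exploit two facts: every shock wavefront satisfying \eqref{e:phi-limits} is decreasing, and the jump conditions \eqref{e:llsk}, in the form \eqref{e:hypo}, hold at $\xi_s$. Since $\phi$ is decreasing, $\phi_r=\phi(\xi_s^-)$ is the larger value and $\phi_\ell=\phi(\xi_s^+)$ the smaller, with $0\le\phi_\ell<\phi_r\le 1$. By \eqref{e:hypo}$_1$ we have $P(\phi_r)=P(\phi_\ell)$. Condition (P) says $P$ is strictly increasing on $[0,\alpha]$ and on $[\beta,1]$, and strictly decreasing on $[\alpha,\beta]$, since $P'$ vanishes only at isolated points.

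First I would prove the range statements. If $\phi_\ell\ge\beta$, then $\beta\le\phi_\ell<\phi_r\le1$, so strict monotonicity on $[\beta,1]$ gives $P(\phi_r)>P(\phi_\ell)$, a contradiction. Hence $\phi_\ell\in[0,\beta)$. Symmetrically, if $\phi_r\le\alpha$, both states lie in $[0,\alpha]$ and again $P(\phi_r)>P(\phi_\ell)$, a contradiction. Hence $\phi_r\in(\alpha,1]$.

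For the sign of $c$ I will use the Rankine--Hugoniot type condition \eqref{e:llsk}$_2$. I first need the one-sided limits $P(\phi)'(\xi_s^\pm)$ to exist. This follows from Proposition~\ref{p:Pconditions}(ii): $P(\phi)'+c\phi$ extends continuously across $\xi_s$, and $\phi$ has one-sided limits there. Continuity of that function then gives
\[
c\,(\phi_r-\phi_\ell)=P(\phi)'(\xi_s^+)-P(\phi)'(\xi_s^-),
\]
which is \eqref{e:hypo}$_2$. It therefore suffices to determine the signs of the two one-sided derivatives.

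Suppose $\phi_\ell\in(\alpha,\beta)$. Then $\phi_r\notin(\alpha,\beta]$, since otherwise $\alpha<\phi_\ell<\phi_r\le\beta$ and strict decrease of $P$ on $[\alpha,\beta]$ would give $P(\phi_r)<P(\phi_\ell)$. So $\phi_r\in(\beta,1]$. Because jumps are finitely many, there is $\delta>0$ such that $\phi$ is continuous on $(\xi_s-\delta,\xi_s)$ and on $(\xi_s,\xi_s+\delta)$. On these intervals $\phi$ takes values in $[\beta,1]$ and in $(\alpha,\beta)$ respectively.

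The key observation, which avoids any pointwise discussion of $\phi'$ at zeros of $D$, is a monotonicity argument. On the left interval, $P(\phi)$ is nonincreasing, as the composition of a decreasing $\phi$ with $P$ increasing on $[\beta,1]$. On the right interval, $P(\phi)$ is nondecreasing, since $P$ is decreasing on $[\alpha,\beta]$. By Proposition~\ref{p:Pconditions}(i), $P(\phi)$ is $C^1$ on these intervals, so $P(\phi)'\le0$ on the left and $P(\phi)'\ge0$ on the right. Passing to the limit gives $P(\phi)'(\xi_s^-)\le0\le P(\phi)'(\xi_s^+)$, hence $c\ge0$. The case $\phi_r=1$ is covered because then $\phi\equiv1$ to the left of $\xi_s$.

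Part (ii) is symmetric. If $\phi_r\in(\alpha,\beta)$, the same argument with strict monotonicity of $P$ on $[\alpha,\beta]$ forces $\phi_\ell\in[0,\alpha)$. Then $P(\phi)$ is nondecreasing just left of $\xi_s$, where values lie in $(\alpha,\beta)$, and nondecreasing just right of $\xi_s$, where values lie in $[0,\alpha]$, because $\phi$ is decreasing and $P$ is increasing on $[0,\alpha]$. So $P(\phi)'(\xi_s^-)\ge0\ge P(\phi)'(\xi_s^+)$ and $c\le0$.

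The only delicate point I expect is justifying the existence and signs of the one-sided limits of $P(\phi)'$. The monotonicity argument combined with Proposition~\ref{p:Pconditions} should settle it without any case analysis at the points where $D(\phi)$ vanishes.
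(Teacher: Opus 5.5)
Your proof is correct and is essentially the paper's argument. The range claims come from \eqref{e:hypo}$_1$ and the monotonicity pattern of $P$, and the sign of $c$ comes from \eqref{e:hypo}$_2$ (equivalently \eqref{e:cz}) with one-sided limits of $P(\phi)'$ of opposite signs; the paper writes these as $z(\phi_\ell)\ge 0\ge z(\phi_r)$ via Lemma~\ref{l:extended}, whereas you obtain them locally near $\xi_s$ from Proposition~\ref{p:Pconditions} and the monotonicity of $P(\phi)$. One wording slip: in (ii), $P(\phi)$ is non\emph{increasing} (not nondecreasing) just to the right of $\xi_s$, which is exactly what your inequality $0\ge P(\phi)'(\xi_s^+)$ uses.
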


\begin{proof}
We only discuss {\em (i)} since the proof of {\em (ii)} is similar. 

If $\phi_\ell \in [\beta, 1]$ then $P(\varphi_r)>P(\varphi_\ell)$ and so $\eqref{e:hypo}_1$ cannot be satisfied that is, we get a contradiction with Proposition~\ref{p:Pconditions}(i); hence $\phi_\ell \in [0, \beta)$. If $\phi_\ell \in (\alpha, \beta)$, then necessarily $\phi_r \in (\beta, 1]$ again by $\eqref{e:hypo}_1$. Let $z$ be defined as in \eqref{e:z}  in $(\alpha,\varphi_\ell)\cup(\varphi_r,1)$. Then $z>0$ in $(\alpha,\varphi_\ell)$ and $z<0$ in $(\varphi_r,1)$; by Lemma~\ref{l:extended}, $z$ can be extended to $\phi_{\ell}$ and $\phi_r$, and so $z(\phi_{\ell})\ge 0$, $z(\phi_r)\le 0$. Hence, $c\ge0$ by \eqref{e:cz}.
\end{proof}

Lemma \ref{l:signC} does not rule out the cases $\varphi_\ell = \alpha$ or $\varphi_r =\beta$, which cannot occur at the same time because of $\eqref{e:hypo}_1$. 

Proposition~\ref{p:tratti} gives necessary and sufficient conditions about the existence of shock wavefronts with $\phi'=0$ a.e. In particular, in that case we have $P(1)=P(0)$ and $c=0$. 
The following proposition deals with the case when $\phi'\ne0$ a.e.

\begin{proposition}
\label{prop:necess}
Assume that \eqref{e:E} has a shock wavefront $\phi$ satisfying \eqref{e:phi-limits} with $\phi'\ne0$ a.e. Then
\begin{enumerate}[label=(\roman*)]
\item $P(1) > P(0)$;
\item $\varphi$ has a single jump point, and $(\varphi_\ell,\varphi_r) \in [0,\alpha]\times[0,\beta]$, with $(\varphi_\ell,\varphi_r) \neq (0,1)$.
\end{enumerate}
\end{proposition}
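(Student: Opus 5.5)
The statement as printed reads $(\varphi_\ell,\varphi_r)\in[0,\alpha]\times[0,\beta]$. I take the intended claim to be $(\varphi_\ell,\varphi_r)\in[0,\alpha]\times[\beta,1]$, since $\varphi_r\in(\alpha,1]$ by Lemma~\ref{l:signC}, and this is what the plan proves. Let $\phi$ be decreasing, with $\phi'\ne0$ a.e. Then $\phi$ is never constant on a nontrivial interval, so in particular it is not constant at $\gamma$, and every value in $(0,1)\setminus\phi(\R)$ is skipped by a jump. At each jump, \eqref{e:hypo} gives $P(\phi_\ell)=P(\phi_r)$, and Lemma~\ref{l:signC} gives $\phi_\ell\in[0,\beta)$ and $\phi_r\in(\alpha,1]$.

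The first and main step is to exclude jumps with $\phi_\ell\in(\alpha,\beta)$ or $\phi_r\in(\alpha,\beta)$. Suppose some jump has $\phi_\ell\in(\alpha,\beta)$, so that $c\ge0$ by Lemma~\ref{l:signC}. To the right of this jump the profile decreases from $\phi_\ell$ towards $0$, and I distinguish two cases.
\begin{itemize}
\item If it reaches $0$ without further jumps, then it is a regular semi-wavefront to $0$ starting from $\sigma=\phi_\ell\in(\alpha,\beta)$. Proposition~\ref{p:2eta}(2) then gives $c<0$, a contradiction.
\item Otherwise there is a next jump, with right value $\phi_r'\le\phi_\ell<\beta$ and, by Lemma~\ref{l:signC}, $\phi_r'>\alpha$. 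Lemma~\ref{l:signC}(ii) then gives $c\le0$, hence $c=0$.
\end{itemize}
The symmetric argument for a jump with $\phi_r\in(\alpha,\beta)$ uses Proposition~\ref{p:2eta}(1) on the stretch coming from $1$. Again either it gives a contradiction directly, or it forces an earlier jump with $\phi_\ell\in(\alpha,\beta)$ and hence $c=0$.

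So the remaining obstacle is the case $c=0$ with a stretch of the profile lying entirely in $(\alpha,\beta)$ between two jumps. For $c=0$, Remark~\ref{r:sidelimits}(iii) shows that $P(\phi)'$ is continuous on $\R$, so \eqref{e:cz} gives $z(\phi_r)=z(\phi_\ell)$ at every jump. The sign conditions ($z\ge0$ on the side of a jump lying in $(\alpha,\beta)$, $z\le0$ on the side lying in $[\beta,1)$ or in $(0,\alpha)$) then force $z=0$ at both endpoints of the middle stretch.

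On that stretch $z>0$ in the interior, and $z$ solves $\tfrac12 (z^2)^{\cdot}=-Dg$ by \eqref{eq:z} with $c=0$. Integrating between the two endpoints gives $\int Dg=0$ over the stretch. I then plan to combine this with the analogous identities on the outer stretches, $\int_{\phi_r}^1 Dg=0$ and $\int_0^{\phi_\ell'}Dg=0$, which come from $z(1)=z(0)=0$. The sign of $Dg$ is fixed on each of the intervals \eqref{e:intervals}. I will try to show that these identities are incompatible with the equal-$P$ conditions at the two jumps. I expect this energy bookkeeping to be the hardest point; if it fails, I would instead compare with the monotone families $z_c$ of Proposition~\ref{p:twins}.

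With interior jump values excluded, every jump satisfies $\phi_\ell\in[0,\alpha]$ and $\phi_r\in[\beta,1]$. Uniqueness of the jump follows from monotonicity. After a jump the profile stays $\le\phi_\ell\le\alpha$, so a later jump would need a left limit $\phi_r'\ge\beta>\alpha\ge\phi_\ell$, which contradicts monotonicity.

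If $(\phi_\ell,\phi_r)=(0,1)$, the decreasing profile is identically $1$ to the left of the jump and identically $0$ to its right. Then $\phi'=0$ everywhere, contradicting $\phi'\ne0$ a.e.

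Finally, $P$ is increasing on $[0,\alpha]$ and on $[\beta,1]$, so
\[
P(1)\ge P(\phi_r)=P(\phi_\ell)\ge P(0).
\]
Equality throughout would require $\phi_r=1$ and $\phi_\ell=0$, which was just excluded. Hence $P(1)>P(0)$.
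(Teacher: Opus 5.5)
Your reading of the misprint is correct: the paper's proof ends with $\varphi_r\in[\beta,1]$. Most of your outline also matches the paper's proof. You use Lemma~\ref{l:signC} against Proposition~\ref{p:2eta} to exclude interior jump values, reduce to a two-jump configuration with $c=0$, and get $z=0$ at all four jump endpoints from \eqref{e:cz} and the signs of $z$. Then monotonicity gives a single jump, and strict monotonicity of $P$ on $[0,\alpha]$ and $[\beta,1]$ gives $P(1)>P(0)$.

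The gap is the step you yourself flag as hardest. The $c=0$ two-jump configuration is never actually excluded, and the mechanism you propose would not exclude it. The conditions $P(\varphi_\ell)=P(\varphi_r)$ do not involve $g$. The whole-stretch identity $\int Dg=0$ on the middle piece is perfectly consistent when that piece straddles $\gamma$, since $Dg>0$ on $(\alpha,\gamma)$ and $Dg<0$ on $(\gamma,\beta)$. Suppose the outer pieces were constant ($\varphi_r=1$, $\varphi_\ell'=0$). Then your outer identities hold trivially, and one can choose $P$ with $P(\alpha)>P(0)>P(1)>P(\beta)$ and $g$ so that both equal-$P$ conditions and the middle identity hold at once. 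So no bookkeeping of integrals against the jump conditions can produce the contradiction.

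What closes the case is the hypothesis $\varphi'\neq0$ a.e., which you state at the outset but never use here. It forbids $\varphi\equiv1$ on a half-line, so $\varphi_r<1$. The left piece is then a nontrivial regular semi-wavefront from $1$ to $\varphi_r\in(\beta,1)$, with $z<0$ on $(\varphi_r,1)$ and $z(\varphi_r)=z(1)=0$. But there $\dot z=-Dg/z>0$, which is a contradiction; equivalently, your own identity $\int_{\varphi_r}^1 Dg=0$ contradicts $Dg>0$ on $(\beta,1)$. This is essentially the paper's argument.

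Alternatively, use the pointwise form of your middle identity. The relation $z^2(\mu)=-2\int_{\varphi_r'}^{\mu}Dg\ge0$ for $\mu$ just above $\varphi_r'$ forces $\varphi_r'\ge\gamma$. The analogous relation near $\varphi_\ell$ forces $\varphi_\ell\le\gamma$. So the middle piece is constant at $\gamma$, which is again excluded by $\varphi'\neq0$ a.e.

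Either way, the contradiction comes from pointwise sign information on $z$ together with the a.e.\ hypothesis, not from the equal-$P$ conditions.
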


\begin{proof}
From $\eqref{e:llsk}_1$, the interval $(\phi_\ell, \phi_r)$ must contain at least one sign change of the function $P' = D$. Since $P' = D$ changes sign only twice (at the points $\alpha$ and $\beta$), the profile $\phi$ can have at most two jumps.

Suppose, by contradiction, that $\phi$ has two jump discontinuities, located at points $\xi_1>\xi_2$, corresponding to jumps from $\phi_r^{i}$ to $\phi_\ell^{i}$ for $i=1,2$, such that, since $\phi$ is decreasing,
\begin{equation*}
\label{e:two_jumps}
0 \le \phi_\ell^{1} < \alpha < \phi_r^{1} \le \phi_\ell^{2} < \beta < \phi_r^{2} \le 1.
\end{equation*}
Because $\phi_r^{1}, \phi_\ell^{2} \in (\alpha, \beta)$, by Lemma~\ref{l:signC} it follows $c = 0$. Hence $z(\varphi_\ell^1) = z(\varphi_r^1)$ and $z(\varphi_\ell^2) = z(\varphi_r^2)$ by \eqref{e:cz}. Since $z(\varphi_\ell^1), z(\varphi_r^2) \le 0$ and $z(\varphi_r^1), z(\varphi_\ell^2) \ge 0$, we must have
\[
z(\varphi_\ell^1) = z(\varphi_r^1) = z(\varphi_\ell^2) = z(\varphi_r^2) = 0.
\]
Since $\phi'\ne0$ a.e., it follows that $(\phi_\ell^1,\phi_r^1,\phi_\ell^2,\phi_r^2) \neq (0,\gamma,\gamma,1)$. Suppose $\phi_r^2 < 1$, since the other cases are treated similarly. We have
\[
z(\phi) < 0 
\qquad \text{and} \qquad 
\dot z(\phi) = -\frac{D(\phi) g(\phi)}{z(\phi)} > 0, 
\quad \text{for } \phi \in (\phi_r^2, 1),
\]
which contradicts $z(\phi_r^2) = 0$. Therefore, $\phi$ has only one jump, from $\varphi_r$ to $\varphi_\ell$.

From Lemma~\ref{l:signC}, we have $\phi_\ell \in [0, \beta)$ and $\phi_r \in (\alpha, 1]$. We now exclude the possibility that either $\phi_\ell$ or $\phi_r$ lies in the interval $(\alpha, \beta)$.

Assume by contradiction that $\phi_\ell \in (\alpha, \beta)$. Then we have $c \ge 0$ by Lemma~\ref{l:signC}. On the other hand, since $\phi_\ell \in (\alpha, \beta)$, Proposition~\ref{p:2eta}\textit{(2)} implies $c < 0$, a contradiction.
Similarly, suppose $\phi_r \in (\alpha, \beta)$. Then Lemma~\ref{l:signC} gives $c \le 0$, while Proposition~\ref{p:2eta}\textit{(1)} implies $c > 0$, again yielding a contradiction.
Hence, we deduce that $\phi_\ell \in [0, \alpha]$ and $\phi_r \in [\beta, 1]$. This proves item \emph{(ii)} because $\varphi'=0$ a.e. if $\varphi_\ell = 0$ and $\varphi_r = 1$.

We now prove {\em (i)}. Since $P$ strictly increases in $[0,\alpha]$ and in $[\beta,1]$, by $\eqref{e:llsk}_1$ we have
$0=P(\phi_r) - P(\phi_\ell) < P(1) - P(0)$.
\end{proof}

Because of Proposition~\ref{prop:necess}, when searching for a profile $\phi$ with $\phi'\ne0$ a.e., the notation $\xi_s$, $\phi_\ell$ and $\phi_r$ introduced above \eqref{e:hypo} refers to such a unique jump. 

\begin{definition}\label{d:admissible} We say that $(\varphi_\ell, \varphi_r) \in [0,\alpha] \times [\beta,1]$ is an \emph{admissible pair for a shock wavefront} if $\eqref{e:hypo}_1$ holds, that is, if $P(\varphi_\ell) = P(\varphi_r)$.  
\end{definition}
The next proposition shows that the set of admissible pairs is the graph of a regular, strictly increasing function defined on a closed interval $\mathcal I \subset [0,\alpha]$.

\begin{proposition}
\label{prop:necess2}
If $P(1)>P(0)$, every admissible pair $(\varphi_\ell,\varphi_r)$ for a shock wavefront is of the form $\left(\varphi_\ell, \eta(\varphi_\ell)\right)$, where $\varphi_\ell$ belongs to a closed, nonempty interval $\mathcal I \subset [0,\alpha]$, and $\eta: \mathcal I \to [\beta,1]$ is a function of class $C^0(\mathcal I) \cap C^1\left(\mathrm{int}(\mathcal I)\right)$ satisfying $\dot{\eta} > 0$ on $\mathrm{int}(\mathcal I)$.
\end{proposition}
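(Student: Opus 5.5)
The plan is to use the strict monotonicity of $P$ on the two branches $[0,\alpha]$ and $[\beta,1]$. Set $P_0:=P|_{[0,\alpha]}$ and $P_1:=P|_{[\beta,1]}$. By (P), both are continuous and strictly increasing. Hence $P_0$ is a homeomorphism of $[0,\alpha]$ onto $[P(0),P(\alpha)]$, and $P_1$ is a homeomorphism of $[\beta,1]$ onto $[P(\beta),P(1)]$. An admissible pair $(\varphi_\ell,\varphi_r)$ is exactly a pair with $P_0(\varphi_\ell)=P_1(\varphi_r)$, so the common value lies in
\[
K:=[P(0),P(\alpha)]\cap[P(\beta),P(1)] .
\]
I will define
\[
\mathcal I:=P_0^{-1}(K), \qquad \eta:=P_1^{-1}\circ P_0 \ \text{ on } \mathcal I .
\]
Uniqueness of $\varphi_r$ for each $\varphi_\ell$ follows from the injectivity of $P_1$. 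Every admissible pair therefore has the form $(\varphi_\ell,\eta(\varphi_\ell))$ with $\varphi_\ell\in\mathcal I$.

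Next I check that $K$ is a nonempty closed interval. Closedness is clear. Nonemptiness needs $\max\{P(0),P(\beta)\}\le\min\{P(\alpha),P(1)\}$, which reduces to four inequalities:
\begin{itemize}
\item $P(0)<P(\alpha)$ and $P(\beta)<P(1)$ hold by (P);
\item $P(0)<P(1)$ is the hypothesis;
\item $P(\beta)<P(\alpha)$ holds because $P'<0$ on $(\alpha,\beta)$.
\end{itemize}
So $K$ is a nondegenerate closed interval, and $\mathcal I$, as the preimage of $K$ under the homeomorphism $P_0$, is a closed nonempty (indeed nondegenerate) interval contained in $[0,\alpha]$. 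The function $\eta$ is continuous and strictly increasing, being a composition of increasing homeomorphisms.

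For the $C^1$ regularity on $\mathrm{int}(\mathcal I)$, I will use the inverse function theorem for $P_1$. If $\varphi_\ell\in\mathrm{int}(\mathcal I)$, then $P(\varphi_\ell)\in\mathrm{int}(K)$. This gives two facts:
\begin{itemize}
\item $P(\varphi_\ell)<P(\alpha)$, so $\varphi_\ell<\alpha$ and $D(\varphi_\ell)\neq0$ except possibly at $\varphi_\ell=0$, which is an endpoint anyway;
\item $P(\varphi_\ell)>P(\beta)$, so $\eta(\varphi_\ell)\in(\beta,1]$.
\end{itemize}
In fact $P(\eta)<P(1)$ in the interior, so $\eta(\varphi_\ell)\in(\beta,1)$, where $D>0$. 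Then $P_1^{-1}$ is $C^1$ near $P(\varphi_\ell)$ with derivative $1/D(\eta(\varphi_\ell))$. Consequently
\[
\dot\eta(\varphi_\ell)=\frac{D(\varphi_\ell)}{D(\eta(\varphi_\ell))}>0 .
\]
The only delicate point is this endpoint bookkeeping: making sure that interior points of $\mathcal I$ stay away from the zeros $0,\alpha$ of $D$ and that their images stay away from $\beta,1$. This is handled by the strict inequalities defining $\mathrm{int}(K)$, together with $P$ being strictly monotone, and by (P), under which $D$ vanishes in $[0,1]$ only possibly at $0,1$ and exactly at $\alpha,\beta$.
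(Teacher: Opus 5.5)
Your argument is correct. It arrives at the same map $\eta=P|_{[\beta,1]}^{-1}\circ P|_{[0,\alpha]}$ and the same formula $\dot\eta=D(\varphi_\ell)/D(\eta(\varphi_\ell))$ as the paper, but it gets the interval structure of $\mathcal I$ by a different, more explicit route.

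The paper proceeds through an auxiliary function:
\begin{itemize}
\item it introduces $\mathcal A(\varphi,\psi)=P(\psi)-P(\varphi)$ on $[0,\alpha]\times[\beta,1]$ and obtains a zero from $\mathcal A(0,1)>0>\mathcal A(\alpha,\beta)$;
\item it shows that the projection $\mathcal I$ of the zero set is connected, by an intermediate-value argument between two zeros;
\item it gets closedness of $\mathcal I$ because $\mathcal I$ is the projection of the (compact) zero level set of $\mathcal A$.
\end{itemize}
You instead compute the common range $K=[\max\{P(0),P(\beta)\},\min\{P(\alpha),P(1)\}]$ of the two increasing branches. You then pull $K$ back through the homeomorphism $P|_{[0,\alpha]}$.

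Your route gives slightly more than the statement asks for. Since all four inequalities are strict, $\mathcal I$ is nondegenerate. Its endpoints are explicit, so the characterizations in Remark~\ref{r:struttura_I} follow at once. Your endpoint bookkeeping also justifies why interior points of $\mathcal I$ lie in $(0,\alpha)$ and are sent into $(\beta,1)$, where $D>0$; the paper only asserts this step.

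The paper's level-set formulation avoids identifying the ranges. However, it also relies on the strict monotonicity of both branches, so it offers no real advantage here.
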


\begin{proof}
A pair $(\varphi_\ell, \varphi_r)$ is admissible if and only if it belongs to the zero-level set of the function $\mathcal A$ defined by
\[
(\varphi, \psi) \in [0,\alpha]\times [\beta,1] \ \longmapsto\ 
\mathcal A(\varphi, \psi) := P(\psi) - P(\varphi).
\]
Clearly, $\mathcal A$ is continuous on $[0,\alpha]\times[\beta,1]$. Since 
$\mathcal A(0,1) = P(1) - P(0) > 0$ by hypothesis and 
$\mathcal A(\alpha,\beta) = P(\beta)-P(\alpha) < 0$, then 
the zero-level set of $\mathcal A$ is nonempty.  
We define
\[
\mathcal I := \left\{\, \varphi_\ell \in [0,\alpha] : \ \mathcal A(\varphi_\ell, \varphi_r) = 0.
\ \text{for some } \varphi_r \in [\beta,1] \,\right\}.
\]
Since the restrictions $P_1$ and $P_2$ of $P$ to $[0,\alpha]$ and $[\beta,1]$, respectively, are strictly increasing, then they are invertible and
\[
\mathcal A(\varphi_\ell,\varphi_r)=0 
\iff \varphi_\ell = P_1^{-1}\left(P_2(\varphi_r)\right)
\iff \varphi_r = P_2^{-1}(P_1\left(\varphi_\ell)\right).
\]

Suppose  $\mathcal A(\varphi_1, \psi_1) = \mathcal A (\varphi_2, \psi_2) = 0$ with $\varphi_2 > \varphi_1$.  
For every $\varphi \in (\varphi_1, \varphi_2)$, one has 
\[
\mathcal A(\varphi, \psi_1) = P(\psi_1) - P(\varphi) < 0 < P(\psi_2) - P(\varphi) = \mathcal A(\varphi, \psi_2).
\] 
By the Intermediate Value Theorem and the monotonicity of $P_2$, there exists a unique $\psi \in (\psi_1, \psi_2)$ such that $\mathcal A(\varphi,\psi)=0$.  
Hence $\mathcal I$ is connected, and therefore it is an interval contained in $[0,\alpha]$. Moreover, $\mathcal I$ is closed since it is the projection onto the $\varphi$-component of the zero level set of the continuous function $\mathcal A$. The function
\[
\eta(\varphi_\ell) := P_2^{-1}\left(P_1(\varphi_\ell)\right),
\quad \varphi_\ell \in \mathcal I,
\]
is well defined, and differentiable in ${\rm int}(\mathcal I)$. Moreover, for $\varphi_\ell \in {\rm int}(\mathcal{I})\subset(0,\alpha)$ we have $\eta(\varphi_\ell)\in (\beta,1)$ and so
\[
\dot \eta(\varphi_\ell) = \frac{D(\varphi_\ell)}{D\left(\eta(\varphi_\ell)\right)} > 0.
\] 
 
\end{proof}
It is clear from the proof of Proposition~\ref{prop:necess2} that the set of admissible pairs can also be described as $\left(\zeta(\phi_r),\phi_r\right)$, for $\phi_r$ in some closed nonempty interval $\mathcal{J}\subset[\beta,1]$ and $\zeta:\mathcal{J}\to[0,\alpha]$ a smooth function in $C^0(\mathcal J) \cap C^1\left(\mathrm{int}(\mathcal J)\right)$ satisfying $\dot{\zeta} > 0$ on $\mathrm{int}(\mathcal J)$.

\begin{remark}\label{r:struttura_I}
{\rm 
Assume $P(1)>P(0)$. The structure of the interval $\mathcal{I}$ depends on the relations between the values of $P$ at $0, \alpha, \beta$ and $1$. More precisely, $0 \in \mathcal{I}$ if and only if $P(0)\ge P(\beta)$, while $\alpha \in \mathcal{I}$ if and only if $P(\alpha)\le P(1)$. In particular, when both these conditions are satisfied,  then $\mathcal{I}=[0, \alpha]$.

\noindent Also $\eta(\mathcal{I})$ is an interval, by the continuity of the function $\eta$, and also its structure depends on the  relations between the values of $P$  at $0, \alpha, \beta$ and $1$. More precisely $\beta \in \eta(\mathcal{I})$ if and only if $P(\beta)\ge P(0)$, while $1 \in \eta(\mathcal{I})$ if and only if $P(\alpha)\ge P(1)$. In particular, when both these latter conditions are satisfied, then $\eta(\mathcal{I})=[\beta, 1]$.
}
\end{remark}
\subsection{The proof of Theorem \ref{t:main}}
\label{sec:suff}

The next result provides sufficient conditions under which, roughly speaking, a shock wavefront can be constructed by joining together two traveling waves; they can be either {\em constant} traveling waves or {\em regular} semi-wavefronts, one from $1$ and the other to $0$. 

Consider $c \in \mathbb R$, $\phi_r\in[\beta,1]$ and $\phi_\ell\in[0,\alpha]$. By Proposition~\ref{p:twins} there exist unique (up to shifts) regular semi-wavefronts $\hat\phi$ connecting $1$ with $\beta$ and $\check\phi$ joining $\alpha$ with $0$, with speed $c$. Then, for every $\xi_s \in \mathbb R$ we can suitably shift them and restrict at least one of their domains to define $\varphi:\mathbb R \to [0,1]$ by
\begin{equation}
\label{e:gluing}
\phi(\xi) := 
\begin{cases}
\hat\phi(\xi), & \xi \in (-\infty, \xi_s], \\
\check\phi(\xi), & \xi \in (\xi_s, \infty),
\end{cases}
\end{equation}
where, see Figure \ref{f:hatcheck}:
\begin{enumerate}[label=(\roman*)]
\item
if $\phi_r=1$ then $\hat\varphi\equiv 1$, otherwise $\hat\varphi(\xi_s^-)=\varphi_r$;

\item
if $\phi_\ell=0$ then $\check\varphi\equiv 0$, otherwise $\varphi_\ell:= \check\varphi(\xi_s^+)$. 
\end{enumerate}
%

\begin{figure}[htb]
\begin{center}
\begin{tikzpicture}[>=stealth, scale=0.65]

\draw[->] (-4,0) --  (6,0) node[below]{$\xi$} coordinate (x axis);
\draw[->] (0,0) -- (0,4.5) node[right]{$\phi$} coordinate (y axis);

\draw (-4,3.5) -- (6,3.5);
\draw[very thick] (-4,3.3) .. controls (-2,3.3) and (0,3.2).. (2,2.7) node[near start,below]{\footnotesize$\hat\phi$};
\draw[very thick] (2,1) .. controls (3,0.5) and (5,0.2) ..  (6,0.2) node[near end,above]{\footnotesize$\check\phi$};

\draw[very thick] (-4,3.5) -- (2,3.5) node[near start,above]{\footnotesize$\hat\phi$};
\draw[very thick] (2,0) -- (6,0) node[near end,below]{\footnotesize$\check\phi$};

\draw[dotted] (2,0) node[below]{\footnotesize$\xi_s$} --(2,3.5);
\draw[dotted] (0,2.7) node[left]{\footnotesize$\phi_r$}--(2,2.7);
\draw[dotted] (0,1) node[left]{\footnotesize$\phi_\ell$}--(2,1);

\draw (0,4.5) node[left]{$1$};

\draw(-0.1,2.3) node[left]{\footnotesize$\beta$}--(0.1,2.3);
\draw(-0.1,1.4) node[left]{\footnotesize$\alpha$}--(0.1,1.4);
\end{tikzpicture}

\end{center}
\caption{\label{f:hatcheck}{The traveling waves $\hat\phi$ and $\check\phi$.}}
\end{figure}
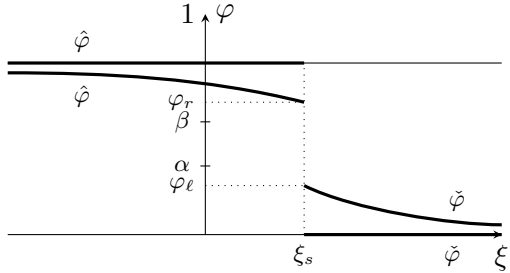

\begin{lemma}
\label{l:converse}
Assume $c \in \mathbb{R}$. Let $\phi$ be defined as in \eqref{e:gluing} for some $\xi_s$ in $\mathbb{R}$. If both conditions in \eqref{e:hypo} are satisfied, then $\varphi$ is the unique profile with speed~$c$ of a shock wavefront with a single jump from $\varphi_r$ to $\varphi_\ell$ located at $\xi_s$.
\end{lemma}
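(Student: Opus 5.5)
To prove Lemma~\ref{l:converse} I will verify directly that the glued function $\phi$ in \eqref{e:gluing} satisfies the weak formulation \eqref{e:solution} on $I=\mathbb R$, and then check the wavefront requirements (monotonicity, limits) and uniqueness.

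First, the structural properties. By Proposition~\ref{p:twins}, $\hat\phi$ is a decreasing regular semi-wavefront from $1$ with values in $[\beta,1]$, or the constant $1$. Likewise $\check\phi$ is decreasing with values in $[0,\alpha]$, or the constant $0$. Since $\phi_\ell\le\alpha<\beta\le\phi_r$, the function $\phi$ is monotone (decreasing) with $\phi(-\infty)=1$ and $\phi(\infty)=0$, and it has exactly one jump, at $\xi_s$, from $\phi_r$ to $\phi_\ell$. It is non-constant because $\phi_r>\phi_\ell$.

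Next, the weak formulation. Fix $\psi\in C_c^\infty(\mathbb R)$ and split $\int_{\mathbb R}P(\phi)\psi''\,d\xi$ into integrals over $(-\infty,\xi_s)$ and $(\xi_s,\infty)$.

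\emph{Regularity on each half-line.} On each half-line $\phi$ is a regular TW (or constant at a zero of $g$). By Proposition~\ref{p:Pconditions} and Lemma~\ref{l:extended}, $P(\phi)$ is absolutely continuous there and $P(\phi)'+c\phi$ is absolutely continuous up to $\xi_s$, with derivative $-g(\phi)$.

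\emph{Integration by parts.} I integrate by parts twice on each half-line. The interior terms reproduce $c\int\phi\psi'-\int g(\phi)\psi$. The only remaining terms are boundary terms at $\xi_s$:
\[
\bigl[P(\phi)\bigr]_{\xi_s}\psi'(\xi_s)\quad\text{and}\quad \bigl[P(\phi)'+c\phi\bigr]_{\xi_s}\psi(\xi_s),
\]
up to sign. The first vanishes by \eqref{e:hypo}$_1$. The second vanishes by \eqref{e:hypo}$_2$, which is exactly $[P(\phi)'+c\phi]=0$.

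\emph{Main obstacle: the one-sided limits at $\xi_s$.} These must exist and equal $z(\phi_r)$ and $z(\phi_\ell)$. The difficulty is that $\phi_r$ may equal $\beta$, or $\phi_\ell$ may equal $\alpha$, where $D$ vanishes and $\phi$ might be only continuous. Lemma~\ref{l:extended} and Remark~\ref{r:z=0} still give finite limits, equal to $0$ there. When $\hat\phi\equiv1$ or $\check\phi\equiv0$, the corresponding limit is trivially $0$, consistent with Lemma~\ref{l:z1z0}.

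Finally, uniqueness. Take any shock wavefront with speed $c$ and a single jump from $\phi_r$ to $\phi_\ell$ at $\xi_s$. Its restrictions to $(-\infty,\xi_s)$ and $(\xi_s,\infty)$ are regular semi-wavefronts from $1$ and to $0$ respectively, or constant pieces. By the uniqueness in Proposition~\ref{p:twins}, they coincide up to shift with $\hat\phi$ and $\check\phi$. The shifts are then fixed by the values $\phi_r$ and $\phi_\ell$ at $\xi_s$.

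This step needs care with flat parts: if $\phi_r=1$, the left piece must be identically $1$. The reason is that a nonconstant piece from $1$ reaching the value $1$ at a finite point cannot occur inside a monotone profile with $\phi(\xi_s^-)=1$. Hence the profile is the one in \eqref{e:gluing}, and in particular it is unique.
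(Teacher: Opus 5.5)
Your proof is correct and follows the paper's route. You integrate by parts twice on each side of $\xi_s$, so that only $[P(\phi)]\psi'(\xi_s)$ and $[P(\phi)'+c\phi]\psi(\xi_s)$ survive, and these vanish by \eqref{e:hypo}; uniqueness then comes from Proposition~\ref{p:twins}. Working on whole half-lines via the Lipschitz continuity of $P(\phi)'+c\phi$, instead of localizing to a small $(a,b)$ where $\phi\in C^2$, is a harmless variant, and it also covers the case $(\phi_\ell,\phi_r)=(0,1)$ that the paper treats separately.

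One correction: when $\phi_r=\beta$ or $\phi_\ell=\alpha$, the one-sided limits of $P(\phi)'$ are $z_c(\beta)$ and $z_c(\alpha)$, and these are in general \emph{not} zero. Remark~\ref{r:z=0} only applies when $\alpha$ or $\beta$ is crossed in the interior of the domain. For example, if $P(\beta)=P(0)$, then \eqref{e:cz} and Remark~\ref{r:skkwe} give $z(\beta)=-c^*\beta\neq 0$. Your argument only uses the finiteness of these limits, and that already follows from $(P(\phi)'+c\phi)'=-g(\phi)$ being bounded.
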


\begin{proof}
Let $\phi$ be defined as in \eqref{e:gluing} for some $c, \xi_s \in \mathbb{R}$. Assume  conditions  \eqref{e:hypo}. 

If  $(\varphi_\ell,\varphi_r)=(0,1)$, then \eqref{e:hypo}$_1$ implies $P(1)=P(0)$ and \eqref{e:hypo}$_2$ yields $c=0$. By Remark \ref{rem:2jumps} it follows that $\phi$ is the unique shock traveling wave solution with a single jump. 

Now consider $(\varphi_\ell, \varphi_r)\neq (0,1)$. Since the function $\varphi$ defined in \eqref{e:gluing} is globally defined, decreasing, and continuous except at $\xi_s$, then $\phi$ is a shock wavefront if it satisfies \eqref{e:solution}. Moreover, since we already know that $\phi$ is a solution in $(-\infty,\xi_s)\cup(\xi_s,\infty)$, it suffices to show that $\phi$ also solves \eqref{e:solution} in a neighborhood $(a,b)$ of $\xi_s$, with $a,b\in \mathbb{R}$. We may further assume that $(a,b)$ is small enough not to contain the points 
$\xi_1 := \inf\{\xi:\, \varphi(\xi)<1\}$ and $\xi_2 := \sup\{ \xi:\, \varphi(\xi)>0\}$,
whenever they are finite. As a conclusion, we have $\varphi \in C^2\left((a,b)\setminus\{\xi_s\}\right)$, by Proposition~\ref{p:Pconditions}{\em (iii)}. %

Let $\psi \in C^\infty_c(\mathbb R)$, with ${\rm supp}(\psi)\subset (a,b)$, with $a$ and $b$ as above. We need to prove 
 \begin{equation}
 \label{e:intCa}
 \int_a^b P(\varphi)\psi''-c\varphi \psi' +g(\varphi)\psi\,d\xi = 0.
 \end{equation}
Denote $\mathcal L:= P(\varphi)\, \psi'' - c \varphi \,\psi' +g(\varphi)\,\psi$.
In every interval $(c,d)\subset(a,b)$ such that $\varphi \in C^2(c,d)\cap C^0[c,d]$ and $P(\varphi)' \in C^0[c,d]$  we have
\begin{align}
\int_c^d \mathcal L\,d\xi &= \int_c^d P(\varphi)\, \psi'' - c \varphi \,\psi' +g(\varphi)\,\psi\,d\xi
\nonumber
\\
&= P(\varphi)\,\psi'|^d_c + \int_c^d  -\left(P(\varphi)' +c\varphi\right)\psi' +g(\varphi)\,\psi\,d\xi 
\nonumber
\\
&= P(\varphi)\,\psi'|^d_c -\left(P(\varphi)'+c\varphi\right)\psi|^d_c+\int_c^d  \left(P(\varphi)''+c\varphi' +g(\varphi)\right)\psi\,d\xi
\nonumber
\\
&= P(\varphi)\,\psi'|^d_c -\left(P(\varphi)'+c\varphi\right)\psi|^d_c,
\label{e:heks}
\end{align}
since $P(\varphi)''+c\varphi' +g(\varphi)=0$ in $(c,d)$. By Lemma \ref{l:extended}, $P(\varphi)'$ is continuously extendable to the intervals $[a,\xi_s]$ and $[\xi_s,b]$; therefore we can apply \eqref{e:heks} to both of them. Then \eqref{e:intCa} holds if and only if
\begin{equation*}
\begin{aligned}
0&=\int_a^b \mathcal L \, d\xi  = \int_a^{\xi_s} \mathcal L \,d\xi+ \int_{\xi_s}^b \mathcal L\,d\xi
\\
&= P(\hat\varphi)\psi'|_a^{\xi_s} + P(\check\varphi)\psi'|_{\xi_s}^b
-\left(P(\hat\varphi)'+c\hat\varphi\right)\psi|_a^{\xi_s} - \left(P(\check\varphi)'+c\check\varphi\right)\psi|_{\xi_s}^b
\\
&=[P(\varphi)]\psi'(\xi_s)-\left[P(\varphi)'+c\varphi\right]\psi(\xi_s).
\end{aligned}
\end{equation*}
Since $\psi$ is arbitrary, \eqref{e:intCa} holds if and only if  \eqref{e:llsk} are satisfied. These conditions are equivalent to those in  \eqref{e:hypo}. Therefore, $\phi$ is a shock wavefront. The uniqueness (up to shifts) of $\phi$ then follows from Proposition~\ref{p:twins}.
\end{proof}

We have now all the ingredients to prove Theorem~\ref{t:main}.

\begin{proofof}{Theorem \ref{t:main}}
We have three cases.

If $P(1)<P(0)$, then no shock wavefront $\phi$ with $\phi'\ne0$ a.e. exist because of Proposition~\ref{prop:necess}. Shock wavefronts with $\phi'=0$ a.e. cannot exist as well by Remark~\ref{rem:2jumps}.

If $P(1)=P(0)$, then only piecewise constant solutions are possible:  
since, by the very definition of wavefront, the profile must be decreasing, the only admissible piecewise-constant shock wavefronts are those described in Remark \ref{rem:2jumps}.  This completes the proof of items (2) and (3) in Theorem \ref{t:main}.

If $P(1)>P(0)$, then, by Propositions~\ref{prop:necess} and~\ref{prop:necess2} and Remark~\ref{rem:2jumps}, any shock wavefront must correspond to a profile with a single discontinuity point, associated with an admissible pair $(\varphi_\ell, \varphi_r)=\left(\varphi_\ell, \eta(\varphi_\ell)\right)$, where $\varphi_\ell \in \mathcal I$.  
Thus, to complete the proof of Item (1), it remains to show that for every admissible pair there exists one and only one speed $c^*=c^*(\varphi_\ell)$, that it is continuous and strictly decreasing on $\mathcal I$. 

\medskip
\emph{Existence of $c^*(\varphi_\ell)$.}

\noindent Let $\phi$ be defined as in \eqref{e:gluing} with $(\phi_{\ell}, \phi_r)$ an admissible pair (see Definition ~\ref{d:admissible}). Consider first the case $\varphi_\ell \neq 0$ and  $\eta(\varphi_\ell)=\phi_r\neq 1$.  
Since condition $\eqref{e:hypo}_1$ holds by definition of admissible pair, if condition $\eqref{e:hypo}_2$ is also satisfied, then, due to Lemma~\ref{l:converse}, $\varphi$ is the profile of a shock wavefront with speed $c$ and a single jump from $\varphi_r $ to $\varphi_\ell$.

Let $z$ be the continuous extension to $\overline{\varphi(\mathbb R)}$ of the function defined in \eqref{e:z} (see Lemma~\ref{l:extended}), and denote it by $z_c$ to emphasize its dependence on $c$.  
By \eqref{e:cz}, to establish the result it suffices to show that for every $\varphi_\ell \in \mathcal I$ there is a unique $c^*=c^*(\phi_\ell)$ such that
\begin{equation}
\label{e:constraint_z}
c^* = -\frac{z_{c^*}\left(\eta(\varphi_\ell)\right) - z_{c^*}(\varphi_\ell)}{\eta(\varphi_\ell) - \varphi_\ell}.
\end{equation}

For $c \in \mathbb{R}$ and $\varphi_\ell \in \mathcal I$, define
\begin{equation}\label{e:mathcalF}
\mathcal F(c,\varphi_\ell)
:= z_c\left(\eta(\varphi_\ell)\right) - z_c(\varphi_\ell)
   + c\left(\eta(\varphi_\ell) - \varphi_\ell\right).
\end{equation}
Clearly, $\mathcal F(c^*,\varphi_\ell) = 0$ if and only if \eqref{e:constraint_z} holds.
For any fixed $\varphi_\ell \in \mathcal I$, we have that $z_c(\eta(\varphi_\ell))$ is increasing and $z_c(\varphi_\ell)$ is decreasing with respect to  $c$ because of Proposition~\ref{p:twins}.  
Hence $\mathcal F(c,\varphi_\ell)$ is strictly increasing as a function of $c$, because of $\eta(\varphi_\ell)>\varphi_\ell$.
Moreover, by its definition in \eqref{e:z},  $z_c<0$ both in $(0, \phi_{\ell})$ and in $(\phi_r, 1)$; hence, for $c \in \mathbb{R}$ and $\varphi_\ell \in \mathcal I$ we have
\[
z_c(\eta(\varphi_\ell)) + c(\eta(\varphi_\ell) - \varphi_\ell)
\le \mathcal F(c, \varphi_\ell)
\le -z_c(\varphi_\ell) + c(\eta(\varphi_\ell) - \varphi_\ell).
\]
Observe that
\[
\lim_{c \to \infty} 
\big[z_c(\eta(\varphi_\ell)) + c(\eta(\varphi_\ell) - \varphi_\ell)\big] = \infty,
\]
since $z_c(\eta(\varphi_\ell)) \le 0$ is increasing in $c$, and
\[
\lim_{c \to -\infty} 
\big[-z_c(\varphi_\ell) + c(\eta(\varphi_\ell) - \varphi_\ell)\big]
= \lim_{c \to \infty} \big[-z_{-c}(\varphi_\ell) - c(\eta(\varphi_\ell) - \varphi_\ell)\big]
= -\infty,
\]
because $-z_{-c}(\varphi_\ell) \ge 0$ is decreasing in $c$.  
Thus, $\mathcal F$ takes on both positive and negative values.

The function $\mathcal F(c,\varphi_\ell)$ is continuous with respect to $\varphi_\ell\in\mathcal{I}$, since both $\eta$ and $z_c$ are continuous in $\mathcal{I}$ and $\overline{\phi(\R)}$, respectively (see Proposition~\ref{prop:necess2} and Lemma~\ref{l:extended}, respectively).  

The continuity of $\mathcal{F}$ with respect to $c$ follows from an application of  \cite[Lemma 3.3]{BCM1}, as we now show.  Fix $c_0 \in \mathbb{R}$, and consider a sequence $c_n \to c_0$. Define $z_n := z_{c_n}$ and $z_0 := z_{c_0}$ on $[0,\alpha] \cup [\beta,1]$. We first deal with the case where $\{c_n\}_n$ is increasing, and denote $v_n := z_n|_{[\beta,1]}$ and $v_0 := z_0|_{[\beta,1]}$. Recall that in $(\beta,1)$, $v_n$ satisfies \eqref{eq:z} with $c = c_n$, while $v_0$ satisfies \eqref{eq:z} with $c = c_0$. By Proposition~\ref{p:twins}\emph{(1)}, the sequence $\{v_n\}_n$ is increasing and $v_n \le v_0$.  Since $D>0$ and $g>0$ in $(\beta,1)$, we can apply \cite[Lemma 3.3]{BCM1}, which implies that $v_n \to v_0$ uniformly on $[\beta,1]$.  Analogously, define $w_n(\varphi) := z_n(1-\varphi)$ for $\varphi \in [0,\alpha]$, and set $w_0(\varphi) := z_0(1-\varphi)$. As observed in the proof of Proposition~\ref{p:twins}\emph{(2)}, $w_n$ satisfies \eqref{e:z_l}$_1$ with $\bar D, \tilde g >0$ as in \eqref{barDtildeg} defined in $(\bar\alpha,1)$. Moreover, by Proposition~\ref{p:twins}\emph{(2)}, the sequence $\{w_n\}_n$ is increasing and $w_n \le w_0$. Hence \cite[Lemma 3.3]{BCM1} applies again, giving $w_n \to w_0$ uniformly on $[\bar\alpha,1]$ and then  $z_n|_{[0, \alpha]} \to z_0|_{[0, \alpha]}$ uniformly in $[0, \alpha]$.  By similar arguments, the case of a decreasing sequence $\{c_n\}_n$ can be treated in the same way using \cite[Lemma 3.3]{BCM1}. We  have proved that $z_n \to z_0$ uniformly on $[0,\alpha] \cup [\beta,1]$, which in particular implies that $\mathcal F(c_n, \varphi_\ell) \to \mathcal F(c_0, \varphi_\ell)$ for every $\varphi_\ell$, as desired. This shows the continuity of $\mathcal{F}$ with respect to $c$.

By the Intermediate Value Theorem and the strict monotonicity of $\mathcal{F}$ with respect to $c$, we deduce that for every $\varphi_\ell \in \mathcal I$ there exists a unique $c^*(\varphi_\ell)$ such that $\mathcal F\left(c^*(\varphi_\ell),\varphi_\ell\right)=0$.

Finally, if $\varphi_\ell = 0$ (or $\eta(\varphi_\ell)=1$), all arguments still apply by considering $\hat{\varphi}\equiv 0$ (or $\hat{\varphi}\equiv 1$); in this case we have $z_c(\varphi_\ell)=0$ (or $z_c(\eta(\varphi_\ell))=0$) for every $c\in\mathbb{R}$.

\medskip
\emph{Continuity of $c^*(\phi_{\ell})$.}

\noindent Fix $\varphi_0 \in \mathcal I$ and set $c_0 = c^*(\varphi_0)$.  
Since $\mathcal F(\cdot,\varphi_0)$ is strictly increasing, for every $\varepsilon > 0$ we have
\[
\mathcal F(c_0 - \varepsilon, \varphi_0) < 0 < \mathcal F(c_0 + \varepsilon, \varphi_0).
\]
By the continuity of $\mathcal F(c_0 - \varepsilon, \cdot)$ and $\mathcal F(c_0 + \varepsilon, \cdot)$, there exists a neighborhood $U_0=U_0(c_0)$ of $\varphi_0$ such that
\[
\mathcal F(c_0 - \varepsilon, \varphi) < 0 < \mathcal F(c_0 + \varepsilon, \varphi),
\quad \text{for all } \varphi \in U_0.
\]
Consequently, we have $|c^*(\varphi) - c_0| < \varepsilon$ 
for all  $\varphi \in U_0$. Since $\varepsilon>0$ is arbitrary, this proves that $c^*$ is continuous at $\varphi_0$.

\medskip
\emph{Monotonicity of $c^*(\phi_{\ell})$.}

\noindent We compute, for $\phi_{\ell}\in{\rm int}(\mathcal{I})$, 
\begin{equation}\label{e:ift}
\begin{aligned}
\partial_{\phi_{\ell}} \mathcal{F}(c, \phi_\ell)
&= \dot{z_c}(\eta(\phi_\ell))\, \dot{\eta}(\phi_\ell) - \dot{z_c}(\phi_\ell) + c\dot{\eta}(\phi_\ell) - c \\
&= -\frac{(Dg)(\eta(\phi_\ell))}{z_c(\eta(\phi_\ell))}\, \dot{\eta}(\phi_\ell)
   + \frac{(Dg)(\phi_\ell)}{z_c(\phi_\ell)}.
\end{aligned}
\end{equation}
Since, by Proposition~\ref{prop:necess2}, we have
\[
(Dg)(\eta(\phi_\ell)) > 0,\quad \dot{\eta}(\phi_\ell) > 0,\quad (Dg)(\phi_\ell) < 0,
\quad \text{for } \phi_\ell \in {\rm int}(\mathcal I),
\]
we conclude that $\partial_{\phi_\ell} \mathcal{F}(c, \phi_\ell) > 0$ for $\phi_\ell \in {\rm int}(\mathcal I)$.
Hence, if $\varphi_1 < \varphi_2$ and $c_1 := c^*(\varphi_1)$, $c_2 := c^*(\varphi_2)$, we have
$\mathcal F(c_2,\varphi_1) < \mathcal F(c_2,\varphi_2) = 0$.
Since $c \mapsto \mathcal F(c,\varphi_1)$ is strictly increasing, it follows that $c_1 < c_2$.  
Therefore, $c^*(\varphi_\ell)$ is strictly decreasing in $\mathcal{I}$.
\end{proofof}

\subsection{Estimates on the speed}\label{ss:speed}
In this subsection we provide upper and lower estimates for the speed $c^*(\phi_\ell)$ in Theorem \ref{t:main}. 
Notice that if $P(\beta) \ge P(0)$ and $P(\alpha) \le P(1)$, then $P(1)>P(0)$.

\begin{proposition}
\label{prop:c*_bounds}
Assume $P(\beta) \ge P(0)$ and $P(\alpha) \le P(1)$.
Then, for every $\varphi_\ell \in \mathcal I$, we have
\begin{equation}
\label{e:c1c2}
-2\sqrt{\sup_{[0, \alpha)}\frac{1}{\alpha - \phi}\int_{\phi}^{\alpha}\frac{-g(\sigma)D(\sigma)}{\alpha-\sigma}\, d\sigma} < c^*(\varphi_\ell) < 2\sqrt{\sup_{(\beta,1]}\frac{1}{\phi -\beta}\int_{\beta}^{\phi}\frac{g(\sigma)D(\sigma)}{\sigma-\beta}\, d\sigma}.
\end{equation}
\end{proposition}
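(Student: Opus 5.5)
The plan is to exploit the characterization of $c^*(\varphi_\ell)$ as the unique zero of the strictly increasing map $c\mapsto\mathcal F(c,\varphi_\ell)$ defined in \eqref{e:mathcalF}. Denote by $\bar c$ and $\underline c$ the right-hand and left-hand quantities in \eqref{e:c1c2}. Then it suffices to prove
\[
\mathcal F(\bar c,\varphi_\ell)>0 \quad\text{and}\quad \mathcal F(\underline c,\varphi_\ell)<0 \qquad\text{for every } \varphi_\ell\in\mathcal I.
\]
Under the assumptions $P(\beta)\ge P(0)$ and $P(\alpha)\le P(1)$, Remark~\ref{r:struttura_I} gives $\mathcal I=[0,\alpha]$ and $\eta(\mathcal I)=[\beta,1]$. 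So $\varphi_\ell$ ranges over all of $[0,\alpha]$ and $\varphi_r=\eta(\varphi_\ell)$ over all of $[\beta,1]$.

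\emph{Upper bound.} The key ingredient is a threshold estimate for the semi-wavefront from $1$ of Proposition~\ref{p:twins}(1). I will show that $z_c(\beta)=0$ as soon as $c\ge\bar c$. This is the estimate behind \cite[Lemma 4.1(d), Corollary 4.1]{BCM3}: the minimal speed $c_\beta$ for which problem \eqref{e:z_r} admits a solution vanishing at $\beta$ satisfies
\[
c_\beta\le 2\sqrt{\sup_{(\beta,1]}\frac{1}{\phi-\beta}\int_\beta^\phi\frac{g(\sigma)D(\sigma)}{\sigma-\beta}\,d\sigma}.
\]
This is the classical upper bound for minimal speeds of monostable-type problems with $D(\beta)=0$. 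I expect this to be the main obstacle. If the cited result does not give exactly this form, I would prove it directly by a comparison (upper/lower solution) argument. The candidate is $\omega(\phi)=-\frac{c}{2}(\phi-\beta)-\ldots$, or more precisely a lower solution of the form $-k(\phi-\beta)$ adjusted by the integral mean, compared with $z_c$ on $(\beta,1]$ using the monotonicity in $c$ from Proposition~\ref{p:twins}.

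Once $z_{\bar c}(\beta)=0$ is known, fix $c=\bar c>0$. Since $z_c<0$ and $Dg>0$ on $(\beta,1)$, equation \eqref{eq:z} gives $\dot z_c>-c$ there. Integrating from $\beta$ yields $z_c(\varphi_r)\ge -c(\varphi_r-\beta)$, with strict inequality when $\varphi_r>\beta$. Using also $-z_c(\varphi_\ell)\ge0$, we get
\[
\mathcal F(c,\varphi_\ell)\ge -c(\varphi_r-\beta)+c(\varphi_r-\varphi_\ell)=c(\beta-\varphi_\ell)>0 .
\]
Hence $c^*(\varphi_\ell)<\bar c$.

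\emph{Lower bound.} This is symmetric, via the change of variables $\psi(\xi)=1-\varphi(-\xi)$ used in the proof of Proposition~\ref{p:twins}(2), which turns the semi-wavefront from $\alpha$ to $0$ into problem \eqref{e:z_l} with speed $-c$. The same threshold estimate gives $z_c(\alpha)=0$ for $c\le\underline c<0$. On $(0,\alpha)$ we have $z_c<0$ and $Dg<0$, so \eqref{eq:z} gives $\dot z_c<-c$. Integrating from $\varphi_\ell$ to $\alpha$ gives $z_c(\varphi_\ell)\ge c(\alpha-\varphi_\ell)$, that is $-z_c(\varphi_\ell)\le -c(\alpha-\varphi_\ell)$. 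With $z_c(\varphi_r)\le0$ this yields
\[
\mathcal F(c,\varphi_\ell)\le c(\varphi_r-\alpha)<0 .
\]
Hence $c^*(\varphi_\ell)>\underline c$.

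The degenerate endpoint cases ($\varphi_\ell=0$ or $\varphi_r=1$, where the corresponding $z$ vanishes identically) are handled by the same inequalities.
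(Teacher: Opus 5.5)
Your proof is correct, but it takes a different route from the paper's.

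\textbf{The paper's route.} The paper evaluates $\mathcal F$ only at an endpoint of $\mathcal I$. It uses $P(\beta)\ge P(0)$ to guarantee $\beta\in\eta(\mathcal I)$. Since $z_c(\beta)=0$ for $c\ge c^*_{1\beta}$, it gets $\mathcal F(c,\zeta(\beta))=-z_c(\zeta(\beta))+c(\beta-\zeta(\beta))>0$ there, hence $c^*(\zeta(\beta))<c^*_{1\beta}$. It then transfers this bound to all of $\mathcal I$ through the strict monotonicity of $c^*$ from Theorem~\ref{t:main}. Symmetrically, $P(\alpha)\le P(1)$ gives $\alpha\in\mathcal I$ and the lower bound at $\varphi_\ell=\alpha$.

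\textbf{Your route.} You compare $z_c$ with the line through $(\beta,0)$, respectively $(\alpha,0)$, using $\dot z_c>-c$ on $(\beta,1)$ and $\dot z_c<-c$ on $(0,\alpha)$. This gives $\mathcal F(c,\varphi_\ell)\ge c(\beta-\varphi_\ell)>0$ and $\mathcal F(c,\varphi_\ell)\le c(\varphi_r-\alpha)<0$ directly at every $\varphi_\ell$.

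\textbf{Comparison.} Both arguments rest on the same cited threshold estimates for $c^*_{1\beta}$ and $c^*_{\alpha0}$. Your pointwise argument uses neither the monotonicity of $c^*$ nor the hypotheses $P(\beta)\ge P(0)$ and $P(\alpha)\le P(1)$. It therefore yields $-c^*_{\alpha0}<c^*(\varphi_\ell)<c^*_{1\beta}$, and hence \eqref{e:c1c2}, whenever $P(1)>P(0)$. This sharpens the final remark of Subsection~\ref{ss:speed}, where the paper says it ``can only conclude'' weaker statements in those cases. The paper's argument is shorter once monotonicity is already available.

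\textbf{One correction.} Under these hypotheses, Remark~\ref{r:struttura_I} does not give $\mathcal I=[0,\alpha]$ and $\eta(\mathcal I)=[\beta,1]$. It only gives $\alpha\in\mathcal I$ and $\beta\in\eta(\mathcal I)$, so in general $\mathcal I=[\zeta(\beta),\alpha]$ and $\eta(\mathcal I)=[\beta,\eta(\alpha)]$. For example, $\mathcal I_\omega=[\alpha-\omega/3,\alpha]$ in Proposition~\ref{prop:A}. The slip is harmless, because your inequalities only use $\varphi_\ell\in[0,\alpha]$ and $\varphi_r\in[\beta,1]$.
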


\begin{proof} The existence of real numbers $c_{-}<c_{+}$ such that $c^*(\varphi_\ell) \in [c_-,c_+]$, for every $\varphi_\ell \in \mathcal I$, follows from the continuity of $c^*$ on the closed interval $\mathcal I$, as stated in Theorem \ref{t:main}. In the following we look for explicit expressions of these lower and upper bounds. 

\medskip
{\em Definition of $c_{+}$.} 
Since $P(\beta)\ge P(0)$, we have $\beta \in \eta(\mathcal I)$ (see Remark \ref{r:struttura_I}). For every $c\in \mathbb{R}$ consider the unique (unless of shifts)  regular semi-wavefront $\hat \phi_c$ from $1$ to $\beta$ whose existence is given by Proposition \ref{p:twins}{\em (1)}, and let $z_c$ be the corresponding function associated with $\hat \phi_c$ as defined in \eqref{e:z}.  By  \cite[Proposition 3 \text{and} (2.11)]{BCM2} there is $c_{1\beta}^*\ge 0$ such that $z_c(\beta)=0$ if and only if $c \ge c^*_{1\beta}$. More precisely, $c_{1\beta}^*>0$; in fact, if $c_{1\beta}^*=0$, from \eqref{eq:z} we would obtain $\dot z_{c_{1\beta}^*}>0$ in $(\beta, 1)$ which contradicts the fact that $z_{c_{1\beta}^*}(\beta)<0$ in $(\beta, 1)$. Let $\zeta=\eta^{-1}:\eta(\mathcal I)\to \mathcal I$. Recalling the definition of $\mathcal{F}$ in \eqref{e:mathcalF}, we deduce
\[
\begin{aligned}
\mathcal F\left(c, \zeta(\beta)\right)&=z_c(\beta)-z_c\left(\zeta(\beta)\right)+c\left(\beta-\zeta(\beta)\right)
\\
&=
-z_c\left(\zeta(\beta)\right)+c\left(\beta-\zeta(\beta)\right) >0 
\quad \text{for } c\ge c^*_{1\beta},
\end{aligned}
\]
that implies $c^*\left(\zeta(\beta)\right)<c^*_{1\beta}$.  Since $\varphi_r\mapsto c^*\left(\zeta(\varphi_r)\right)$ is strictly decreasing by Theorem \ref{t:main} and Proposition~\ref{prop:necess2}, it follows
\[
c^*(\varphi_\ell) \le c^*\left(\zeta(\beta)\right) < c^*_{1\beta}.
\]
Thus we set $c_{+}:=c^*_{1\beta}$. Moreover, by making use of \cite[Corollary 4.1 and Remark 4.1]{BCM3}, we have
\begin{equation}
\label{e:est_c2}
2\sqrt{g(\beta)D'(\beta)} \le c_{1\beta}^*\le 2\sqrt{\sup_{(\beta,1]}\frac{1}{\phi -\beta}\int_{\beta}^{\phi}\frac{g(\sigma)D(\sigma)}{\sigma-\beta}\, d\sigma}.
\end{equation}

\medskip
{\em Definition of $c_{-}$.} 
The assumption $P(1)\ge P(\alpha)$ implies  $\alpha \in \mathcal I$. Define $\bar z(\varphi):=z_c(1-\varphi)$, so that $\bar z$ satisfies \eqref{e:z_l}, in $(\bar\alpha,1)$, with $\bar D>0$ and $\tilde g>0$. Then, we can apply \cite[Proposition 3]{BCM2}, but with opposite sign speed, because of \eqref{e:z_l}: we deduce that there exists $c^*_{\alpha0}> 0$ (arguing as above) such that $\bar z(\bar\alpha)=0$, that is $z(\alpha)=0$, if and only if $c \le -c^*_{\alpha0}$.  Therefore,
\begin{equation*}
\begin{aligned}
\mathcal F(c,\alpha)&= z_c\left(\eta(\alpha)\right)-z_c(\alpha)+c\left(\eta(\alpha)-\alpha\right)
\\
&=z_c\left(\eta(\alpha)\right)+c\left(\eta(\alpha)-\alpha\right)<0 
\quad \text{for } c \le -c_{\alpha0}^*.
\end{aligned}
\end{equation*}
Then $c^*(\alpha)>-c_{\alpha0}^*$. Since the function $c^*(\varphi_\ell)$ is strictly decreasing from Theorem~\ref{t:main}, we get $c^*(\phi_\ell)\ge c^*(\alpha)$ and so
\[
c^*(\varphi_\ell)> -c_{\alpha0}^*,
\]
for every $\varphi \in \mathcal I$. In this case, we set $c_{-}:=-c_{\alpha0}^*$.
As in the case of $c^+$, we obtain 
\begin{equation}
\label{e:est_c1}
-2\sqrt{\sup_{[0, \alpha)}\frac{1}{\alpha - \phi}\int_{\phi}^{\alpha}\frac{-g(\sigma)D(\sigma)}{\alpha-\sigma}\, d\sigma}\le -c^*_{0\alpha} \le -2\sqrt{g(\alpha)D'(\alpha)}.
\end{equation}

\medskip

Finally, \eqref{e:c1c2} is a consequence of \eqref{e:est_c2} and \eqref{e:est_c1}.
\end{proof}


\begin{remark}\label{r:skkwe}
{\rm
If $P(\beta)=P(0)$ in Proposition~\ref{prop:c*_bounds}, then $\eta(0)=\beta$ and
\[
\mathcal F(c,0)=\mathcal F\left(c,\zeta(\beta)\right)=z_c(\beta)-z_c(0)+c\beta = z_c(\beta)+c\beta.
\]
Hence, $\mathcal F(c,0) >0$ if $c \ge c^*_{1\beta}>0$ (see the proof of Proposition~\ref{prop:c*_bounds})  and $\mathcal F(c,0)<0$ if $c \le 0$; we deduce $c^*(0) \in (0, c^*_{1\beta})$. This estimate complements \eqref{e:est_c2} and shows a case when $c^*>0$. 

Analogously, if $P(1)=P(\alpha)$, then $\alpha \in \mathcal I$ and $\eta(\alpha)=1$. Hence, by Lemma \ref{l:z1z0} we deduce
\[
\mathcal F(c,\alpha)=-z_c(\alpha)+c(1-\alpha).
\]
Clearly, $\mathcal F(c,\alpha)<0$ if $c\le -c^*_{\alpha0}$, and $\mathcal F(c,\alpha)>0$ if $c \ge 0$. Hence, $c^*(\alpha) \in (-c^*_{\alpha0},0)$. This estimate complements estimate \eqref{e:est_c1} and shows a case where $c^*<0$.
}
\end{remark}

\begin{remark}
{
\rm If $P(1)>P(0)$ but either $P(\beta)<P(0)$ or $P(1)<P(\alpha)$ (or both) hold, we can only conclude what follows.

If $P(\beta)<P(0)$, then $0 \in \mathcal I$ and $\eta(0)>\beta$. From Lemma \ref{l:z1z0} we deduce $z_c(0)=0$ and therefore
\[
\mathcal F(c,0)=
z_c\left(\eta(0)\right)-z_c(0)+c\, \eta(0)=z_c\left(\eta(0)\right)+c\, \eta(0) <0, 
\quad \text{for } c \le 0.
\]
Therefore $c^*(0)>0$ and the monotonicity of $c^*$ yields $c^*(\varphi_\ell) < c^*(0)$.

If $P(\alpha)>P(1)$, then $1 \in \eta(\mathcal I)$. Since $z_c(1)=0$, we have
\[
\mathcal F\left(c, \zeta(1)\right)=
-z\left(\zeta(1)\right)+c\left(1-\zeta(1)\right) >0 \quad \text{for } c \ge 0.
\]
Hence $c^*(\zeta(1))<0$ and the monotonicity of $c^*$ implies
$c^*(\varphi_\ell) \ge c^*(\zeta(1))$.
}
\end{remark}

\subsection{Concluding remarks}\label{ss:concluding-remark}
We comment on conditions \eqref{e:hypo} by showing some similarities with hyperbolic equations. 

We saw in Proposition \ref{p:twins} that for every $c\in\R$ there exist unique regular semi-wavefronts from $1$ (reaching $\beta$) and to $0$ (reaching $\alpha$). Both semi-wavefronts where used in the proof of Theorem \ref{t:main} by restricting them to some intervals $(-\infty,\xi_s)$ and $(\xi_s,\infty)$, respectively, where $\phi(\xi_s^-)=\phi_r$ and $\phi(\xi_s^+)=\phi_\ell$, for  $\phi_r$, $\phi_\ell$ and $c$ satisfying \eqref{e:hypo}. 

Proposition \ref{p:twins} was proved by showing that the functions associated to the profiles (see \eqref{eq:z}) satisfy suitable boundary-value problems in $[\beta,1]$ and $[0,\alpha]$, respectively, see \eqref{e:z_r} and \eqref{e:z_l}; we denote by $z_c$ the solution in $[0,\alpha] \cup[\beta,1]$ of both problems with the same $c$. Notice that $z_c<0$ in $(0,\alpha) \cup(\beta,1)$ and then
\begin{equation}\label{e:Dgsign}
Dg/z_c>0\  \hbox{ in }(0,\alpha)\quad \hbox{ and } \quad Dg/z_c<0\ \hbox{ in }(\beta,1). 
\end{equation}
Assume $u=u(x,t)$ is a weak solution to the balance law
\begin{equation}\label{e:hypPDE}
u_t-z_c(u)_x=g(u),
\end{equation}
and jumps from $\phi_\ell$ to $\phi_r$. If $c$ is the propagation speed of the jump, then  \eqref{e:cz} must be satisfied \cite{Bressan, Dafermos}. In the framework of hyperbolic equations, equality \eqref{e:cz} is called Rankine-Hugoniot condition, see Figure \ref{f:cz}.
Since we recover equation \eqref{eq:z} from \eqref{e:hypPDE} if $u(x,t):=\phi(x-ct)$, then
 equation \eqref{e:hypPDE} can be interpreted as the hyperbolic counterpart of the parabolic equation \eqref{e:E}.

\begin{figure}[htb]
\begin{center}
\begin{tikzpicture}[>=stealth, scale=0.65]

\draw[->] (0,0) --  (10,0) node[below]{$\phi$} coordinate (x axis);
\draw[->] (0,0) -- (0,4.5) node[right]{$z$} coordinate (y axis);

\draw[very thick] (0,0) .. controls (1,2) and (2,3.2) .. (3,3.5) node[midway,left]{\footnotesize$-z$};
\draw[very thick] (6,3.7) .. controls (7,3) and (8,2) .. (9,0) node[midway,right]{\footnotesize$-z$} node[below]{\footnotesize$1$};
\draw[dotted] (3,0) node[below]{\footnotesize$\alpha$}--(3,3.5);
\draw[dotted] (6,0) node[below]{\footnotesize$\beta$}--(6,3.7);

\draw[dotted] (1,0) node[below]{\footnotesize$\phi_\ell$}--(1,1.8);
\draw[dotted] (7,0) node[below]{\footnotesize$\phi_r$}--(7,2.8);

\draw (1,1.8)--(7,2.82) node[midway, below]{\footnotesize$c$};
\end{tikzpicture}

\end{center}
\caption{\label{f:cz}{The geometric meaning of condition \eqref{e:cz}. The speed $c$ is the slope of the line joining the points $\left(\phi_\ell,-z(\phi_\ell)\right)$ and $\left(\phi_r,-z(\phi_r)\right)$.}}
\end{figure}

The function $z$ is defined on two non-intersecting intervals, so that the situation is similar to what happens in the modeling of phase transitions \cite{Colombo-Corli_1999}. 
Consider for instance the case of fluid-dynamics; let $v$ be the specific volume and $p$ the pressure. If the interface is stationary, then the pressure of the two boundary phases must coincide; moreover, the horizontal line in the $(v,p)$ plane must cut the plot of the pressure into two parts of equal area. This condition is similar to the equal area condition on $D$ stated above.

Here, the role of the two phases is played by the intervals $[0,\alpha]$ and $[\beta,1]$ where the diffusivity is positive. In the $(x,t)$ plane the phase boundary is the line $x=ct$ and the characteristic speeds of equation \eqref{e:hypPDE} are pointing (tangent, at most) towards the line $x=ct$. This follows by \eqref{e:Dgsign}, because by \eqref{eq:z} the characteristics speeds are $\lambda(u):=c+\frac{D(u)g(u)}{z_c(u)}$.
This means that the jump behaves like a (entropic) shock \cite{Bressan, Dafermos}.

\section*{Acknowledgments}
The authors are members of the {\em Gruppo Nazionale per l'Analisi Matematica, la Probabilit\`{a} e le loro Applicazioni} (GNAMPA) of the {\em Istituto Nazionale di Alta Matematica} (INdAM) and acknowledge financial support from this institution. D.B. was supported by MUR - M4C2 1.5 of PNRR with grant no. ECS00000036, A.C. and L.M. were supported by the PRIN 2022 project \emph{Modeling, Control and
Games through Partial Differential Equations}, funded by the European
Union - Next Generation EU, Mission 4, Component 1, CUP
D53D23005620006.

{\small
\bibliography{refe_BCM6.bib}
\bibliographystyle{abbrv2}
}

\end{document}